\newtheoremstyle{prim}{}{}{\normalfont}{}{\bfseries}{}{ }{}
\theoremstyle{prim}
\newtheorem{ex}{Example}
\newtheoremstyle{stil}{}{}{\slshape}{}{\bfseries}{}{ }{}
\theoremstyle{stil}
\newtheorem{thm}{Theorem}[section]
\newtheoremstyle{defi}{}{}{}{}{\bfseries}{}{ }{}
\theoremstyle{defi}
\newtheorem{defn}[thm]{Definition}
\theoremstyle{defi}
\newtheorem{rem}[thm]{Remark}
\theoremstyle{stil}
\newtheorem{pro}[thm]{Proposition}
\theoremstyle{stil}
\newtheorem{lem}[thm]{Lemma}
\theoremstyle{stil}
\newtheorem{cor}[thm]{Corollary}
\newtheorem*{thm*}{Theorem}
\newtheorem*{lem*}{Lemma}
\newcommand{\om}{\mathop{\mathrm{Hom}}}
\newcommand{\ndo}{\mathop{\mathrm{End}}}
\newcommand{\sym}{\mathop{\mathrm{Sym}}}
\newcommand{\gauss}[2]{\genfrac{[}{]}{0pt}{}{#1}{#2}}
\newcommand{\spn}{\mathop{\mathrm{span}}}
\newcommand{\wt}{\mathop{\mathrm{wt}}}
\newcommand{\card}{\mathop{\mathrm{card}}}
\numberwithin{equation}{section}
\begin{document}
\frenchspacing

\title[Title]{A note on the zeroth products of Frenkel-Jing operators}
\author{Slaven Ko\v{z}i\'{c}} 
\address[Slaven Ko\v{z}i\'{c}]{Department of Mathematics, University of Zagreb, Zagreb, Croatia}
\email{kslaven@math.hr}

\subjclass[2010]{17B37 (Primary), 17B69 (Secondary)}

\keywords{affine Lie algebra, quantum affine algebra, quantum vertex algebra}

\thanks{The research was supported by Croatian Science Foundation
under the project   2634 ``Algebraic and combinatorial methods in vertex algebra
theory''.  }
 
\begin{abstract}  
Quantum vertex algebra theory,  developed by H.-S. Li, allows us to
apply  zeroth products of Frenkel-Jing operators, corresponding to Drinfeld realization of
$U_q (\widehat{\mathfrak{sl}}_{n+1})$, on the extension of Koyama
vertex operators. As a result, we obtain an infinite-dimensional space and
describe its structure as a  module for the associative algebra 
$U_q (\mathfrak{sl}_{n+1})_z$, a certain quantum analogue of
$U(\mathfrak{sl}_{n+1})$ which we introduce in this paper.
\end{abstract}

\maketitle  


\section*{Introduction} 

Let $\mathfrak{g}$ be a simple Lie algebra with the root lattice $Q$ and weight
lattice $P$.
Denote by $\widehat{\mathfrak{g}}$ the associated (untwisted) affine Lie algebra on the underlying vector space 
$$\widehat{\mathfrak{g}}=\mathfrak{g}\otimes\mathbb{C}[t,t^{-1}] \oplus
\mathbb{C}c,$$
with the bracket relations  defined in a usual way (for details see
\cite{Kac}).
The induced
$\widehat{\mathfrak{g}}$-module
$$V=V_{\widehat{\mathfrak{g}}}(l,0)=U(\widehat{\mathfrak{g}})\otimes_{U(\widehat{\mathfrak{g}}_{(\leq
0)})} \mathbb{C}_l,\quad l\in\mathbb{N},$$ 
has a vertex operator algebra structure
\begin{align*}
V\quad&\to\quad \om (V,V((z)))\\
v\quad&\mapsto\quad Y(v,z)=\sum_{r\in\mathbb{Z}}v_{r} z^{-r-1}
\end{align*}
(for details and notation see \cite{LL}). Denote by $a_r =a(r)$ the
action of $a\otimes t^r$, $a\in\mathfrak{g}$,
$r\in\mathbb{Z}$, on an arbitrary restricted $\widehat{\mathfrak{g}}$-module $W$ and
set $$a_W (z) =  \sum_{r\in\mathbb{Z}}a_r z^{-r-1} \in \om (W,W((z))).$$
Every restricted $\widehat{\mathfrak{g}}$-module $W$ of level $l$ is a module
for vertex algebra $V$, and vice versa, which gives us   correspondence
\begin{align}
a^{(1)}(r_1)\cdots a^{(m)}(r_m)1 \qquad &\longleftrightarrow \qquad
a^{(1)}_W (z)_{r_1}\cdots a^{(m)}_W (z)_{r_m}1_W, \label{arrows}
\end{align}
$a^{(j)}\in\mathfrak{g}$, $r_j\in\mathbb{Z}$, $j\geq 0$,
between $\widehat{\mathfrak{g}}$-module actions and  products of the local
vertex operators.
The quotient of $V_{\widehat{\mathfrak{g}}}(l,0)$ over its (unique) largest proper
ideal $I_{\widehat{\mathfrak{g}}}(l,0)$, $$L_{\widehat{\mathfrak{g}}}(l,0)=V_{\widehat{\mathfrak{g}}}(l,0)/I_{\widehat{\mathfrak{g}}}(l,0),$$
is a simple vertex operator algebra whose inequivalent irreducible modules
$L_{\widehat{\mathfrak{g}}}(l,L(\lambda))$, where $L(\lambda)$ is the irreducible
 $\mathfrak{g}$-module with the integral dominant highest weight
$\lambda$, are exactly level $l$ irreducible highest weight integrable 
$\widehat{\mathfrak{g}}$-modules.

Consider the  space
$$V_L = M(1)\otimes \mathbb{C}\left\{L\right\},$$
where $M(1)$ is the irreducible level $1$ Heisenberg
algebra module and $\mathbb{C}\left\{L\right\}$ is the group algebra
of the  lattice $L=Q,P$. In the construction of vertex operator algebras and
modules associated to even lattices, the space $V_Q$ is equipped with a vertex
algebra structure, while $V_P$ becomes its module. The simple vertex algebra
$V_Q$ is isomorphic to $L_{\widehat{\mathfrak{g}}}(1,0)$ while $V_P$ is a direct sum
of the irreducible $V_Q$-modules.
This construction provides us with the  realizations of
$$x_\alpha (z)=\sum_{r\in\mathbb{Z}} x_{\alpha}(r)z^{-r-1} =Y(e^{\alpha},z)$$
for Chevalley generators $x_{\alpha}\in\mathfrak{g}$, $\alpha\in Q$, as well as
as with the realizations of the intertwining operators.

The main motivation for this research is the fact that the top of every
irreducible highest weight integrable $\widehat{\mathfrak{g}}$-module
$L_{\widehat{\mathfrak{g}}}(l,L(\lambda))$ is the irreducible
$\mathfrak{g}$-module $L(\lambda)$, where the action of $a\in\mathfrak{g}\subset L_{\widehat{\mathfrak{g}}}(l,0)$ is
given as $a_0$.

We consider quantum affine algebra $U_q
(\widehat{\mathfrak{sl}}_{n+1})$, defined in terms of  Drinfeld generators,  
coefficients of formal Laurent series $x_{j}^{\pm}(z)$, $\psi_{j}(z)$,
$\phi_{j}(z)$, $j=1,2,\ldots,n$ (see \cite{D}). Even though there is no
correspondence as in \eqref{arrows} for $U_q (\widehat{\mathfrak{sl}}_{n+1})$, some of the main ingredients of the
abovementioned constructions do exist. For example, Frenkel-Jing realization
of level $1$ integrable highest weight $U_q (\widehat{\mathfrak{sl}}_{n+1})$-modules (see \cite{FJ})
gives us the quantum analogues of the operators $x_{\alpha}(z)=Y(e^{\alpha},z)$,
while Y. Koyama in \cite{Koyama} found  the realization of intertwining 
operator $\mathcal{Y}_{i}(z)=\mathcal{Y}(e^{\lambda_i},z)$, $i=1,2,\ldots,n$.
All of these operators are defined on the space 
$$V=M(1)\otimes\mathbb{C}\left\{P\right\},$$
 the tensor product of level 1
irreducible $q$-Heisenberg algebra module $M(1)$ and the twisted group algebra 
$\mathbb{C}\left\{P\right\}$ of the (classical) weight lattice $P$.

Since the operators $x_{j}^{\pm}(z)$, $\psi_{j}(z)$,
$\phi_{j}(z)$ are not local, we can not multiply them as in the classical case, using the theory of local vertex operators.
 However, they satisfy the
 notion of quasi compatibility, i.e. for any two such operators
 $a(z),b(z)\in\om(V,V((z)))$ there exists a nonzero polynomial $p(z_1,z_2)$ such
 that 
 $$p(z_1,z_2)a(z_1)b(z_2)\in \om(V,V((z_1,z_2))).$$
 H.-S. Li introduced $r$th products, $r\in\mathbb{Z}$, among quasi compatible operators,
 $$Y_{\mathcal{E}}(a(z),z_0)b(z)=\sum_{r\in\mathbb{Z}}
 (a(z)_{r}b(z))z_{0}^{-r-1}\in (\ndo V)[[z_{0}^{\pm 1}, z^{\pm 1}]],$$ and
 proved that a family of such operators (acting on an
arbitrary level $t\in\mathbb{C}$ restricted module) generates a (weak) quantum
vertex algebra (cf. \cite{Li}, \cite{Li2}, \cite{Li3}). We would like to mention
that there exist several other approaches to development of ``quantum vertex
algebra theories''. For more information the reader may consult
\cite{AB}, \cite{Bor}, \cite{EK}, \cite{FR}.

We consider the set
\begin{equation*}
\left\{x_{j}^{\pm}(zq^t), \psi_{j}(zq^t), \phi_{j}(zq^t) :
j=1,2,\ldots,n,\,
t\in\textstyle\frac{1}{2}\mathbb{Z}\right\}\subset\om(V,V((z))).
\end{equation*}
Our goal is to study zeroth
products of these operators acting   on $\mathcal{Y}_{i}(z)$.
Since zeroth products $a(zq^t)_{0}b(z)$ equal to zero for almost all $t$, we
introduce an action ``$_\bullet$'', which  plays a role of zeroth products
from the (classical) vertex algebra theory in this quantum setting. 
We show that the  ``$_\bullet$'' products similar to the right side
of \eqref{arrows} are equal to zero if and only if the (zeroth) products similar
to the left side of \eqref{arrows}, i.e. the products of Chevalley generators,
are equal to zero.

 For a quasi
compatible pair $(a(z),b(z))$ we define
\begin{equation}\label{theaction}
a(z)_\bullet
b(z)=\sum_{t\in\frac{1}{2}\mathbb{Z}}\left(\frac{1}{zq^{t}}\right)^{(\wt
a(z),\wt b(z))+1}q^t a(zq^t)_{0}b(z).
\end{equation}
 Our main point of interest is a $\left<\mathcal{Y}_i (z)\right>$, a vector
 space over $\mathbb{C}(q^{1/2})$ spanned by all the vectors obtained by the above defined
action ``$_\bullet$" of the operators $x_{j}^{\pm}(z)$, $\psi_{j}(z)$,
$\phi_{j}(z)$, $j=1,2,\ldots,n$, on $\mathcal{Y}_{i}(z)$. Having in mind the
classical case, one may expect $\left<\mathcal{Y}_i (z)\right>$ to be, roughly
speaking, a quantum version of the irreducible $\mathfrak{sl}_{n+1}$-modules
$L(\lambda _i)$.

\makeatletter
\def\thmhead@plain#1#2#3{%
  \thmname{#1}\thmnumber{\@ifnotempty{#1}{ }\@upn{#2}}%
  \thmnote{ {\the\thm@notefont#3}}}
\let\thmhead\thmhead@plain
\makeatother   

In the first section we recall some preliminary results, while
in
the second section we study the action \eqref{theaction} of Frenkel-Jing
operators. Our key result here is the following Lemma, which plays an important
role in the proof of our main result, given in the third section:
\begin{lem*}[\textbf{\ref{drinfeld}}]
For any $a(z)\in\left<\mathcal{Y}_i (z)\right>$ we have
\begin{equation*}
x_{j_1}^{+}(z)_{\bullet}
x_{j_2}^{-}(z)_\bullet
a(z)
-x_{j_2}^{-}(z)_{\bullet}
x_{j_1}^{+}(z)_\bullet
a(z)
=\frac{\delta_{j_1\,j_2}}{q-q^{-1}}\left(\psi_{j_1}(z)-\phi_{j_1}(z)\right)_\bullet
a(z).
\end{equation*}
\end{lem*}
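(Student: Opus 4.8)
The plan is to deduce the identity from the Drinfeld commutation relation between $x_{j_1}^{+}(z_1)$ and $x_{j_2}^{-}(z_2)$, which is one of the defining relations of $U_q(\widehat{\mathfrak{sl}}_{n+1})$ (see \cite{D}, \cite{FJ}). At level $1$ this relation takes, in the conventions of \cite{FJ}, the form
\begin{equation*}
[x_{j_1}^{+}(z_1), x_{j_2}^{-}(z_2)] = \frac{\delta_{j_1 j_2}}{q-q^{-1}}\left(\delta\!\left(\frac{z_2}{z_1}q^{-1}\right)\psi_{j_1}(z_2 q^{1/2}) - \delta\!\left(\frac{z_2}{z_1}q\right)\phi_{j_1}(z_2 q^{-1/2})\right),
\end{equation*}
so that, through the formal delta functions, the right-hand side is supported on $z_2 = z_1 q^{\pm 1}$. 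The whole point is that both sides of the asserted identity, once the action ``$_\bullet$'' is unwound, are governed by this single relation: the combination $\psi_{j_1}-\phi_{j_1}$ on the right of the Lemma is precisely what the two delta-supported terms collapse to after the summation over $t$ in \eqref{theaction}, and the factor $\delta_{j_1 j_2}/(q-q^{-1})$ is carried over verbatim.

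First I would expand the two nested ``$_\bullet$'' products on the left-hand side directly from the definition \eqref{theaction}. Writing $b(z)=x_{j_2}^{-}(z)_\bullet a(z)$ and using that the weight is additive under the products, this produces a double sum over half-integers $s,t\in\frac{1}{2}\mathbb{Z}$ of iterated zeroth products $x_{j_1}^{+}(zq^{s})_{0}\big(x_{j_2}^{-}(zq^{t})_{0}a(z)\big)$, each carried by an explicit monomial in $zq^{s}$ and $zq^{t}$ coming from the weight exponents. The key reduction is then to rewrite each iterated zeroth product as a residue by means of Li's formula for the $r$th products of quasi compatible operators, and to use the resulting commutator formula to assemble the difference of the two orderings into the $z_0$-residue of the two-variable commutator $[x_{j_1}^{+}(z_1), x_{j_2}^{-}(z_2)]$ acting on $a(z)$.

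At this stage the Drinfeld relation is inserted. The two delta functions $\delta(\tfrac{z_2}{z_1}q^{\mp 1})$ localize the double sum: they force $s$ and $t$ to differ by the fixed half-integer shifts dictated by the $q^{\pm 1}$ and $q^{\pm 1/2}$ appearing in the relation, so that the sum over $(s,t)$ collapses to a single sum over $t$. A direct check that the surviving monomial weight factors match then shows that this single sum is exactly $\big(\psi_{j_1}(z)\big)_\bullet a(z)$ from the $\psi$-term and $-\big(\phi_{j_1}(z)\big)_\bullet a(z)$ from the $\phi$-term; when $j_1\neq j_2$ the commutator vanishes and so does the right-hand side, consistently.

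The main obstacle I anticipate is the bookkeeping of the half-integer shifts together with the weight exponents: one must verify that the shift $q^{1/2}$ (equivalently $q^{c/2}$ at $c=1$) built into the arguments of $\psi_{j_1}$ and $\phi_{j_1}$, combined with the shifts $q^{s},q^{t}$ from the definition of ``$_\bullet$'', conspires so that the localized term carries precisely the factor $(zq^{t})^{-(\wt\psi_{j_1},\wt a)-1}q^{t}=(zq^{t})^{-1}q^{t}$ required to reproduce $(\psi_{j_1}-\phi_{j_1})_\bullet a(z)$. A secondary technical point is justifying the passage from the nested zeroth products to the two-variable commutator within Li's quasi compatible framework, i.e. that the iterated ``$_\bullet$'' action may be reorganized into a residue of $[x_{j_1}^{+}(z_1),x_{j_2}^{-}(z_2)]$ applied to $a(z)$; this is where quasi compatibility and the properties of the $r$th products recalled in the preliminaries are essential.
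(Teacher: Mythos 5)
Your overall idea --- deduce the identity from the Drinfeld relation \eqref{D9}, whose delta functions supported on $z_1=q^{\pm1}z_2$ produce exactly the combination $\psi_{j_1}-\phi_{j_1}$ --- is sound in spirit, and it is not circular, since \eqref{D9} does hold for the Frenkel--Jing action on $V$. The genuine gap is the step you dismiss as a ``secondary technical point'': reorganizing the difference of the two iterated $_\bullet$ products into a residue of the two-variable commutator $[x_{j_1}^{+}(z_1),x_{j_2}^{-}(z_2)]a(z)$. That step is the crux, and nothing in Li's quasi-compatibility framework, as recalled in the paper, supplies it. A commutator formula for iterated products is a consequence of ordinary locality (singularities on $z_1=z_2$), which fails here: the commutator is supported on $z_1=q^{\pm 1}z_2$. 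Concretely, for the unshifted pair one has $x_{j}^{+}(z)_{r}\,x_{j}^{-}(z)=0$ for all $r\geq 0$, because the compatibility polynomial $(z_1-qz_2)(z_1-q^{-1}z_2)$ of \eqref{r11} has no zero at $z_1=z_2$, so the expansion $\iota_{z,z_0}$ in Definition \ref{n:product} produces no negative powers of $z_0$; this vanishing is precisely why the shifted product \eqref{dot} is introduced in the first place. Any ``shifted commutator formula'' expressing $x_{j}^{+}(z)_\bullet x_{j}^{-}(z)_\bullet a(z)-x_{j}^{-}(z)_\bullet x_{j}^{+}(z)_\bullet a(z)$ through \eqref{D9} would have to be proved from scratch: one must show that the two iterated extractions --- each a residue taken of a \emph{different} operator ordering, hence of a different expansion, and each localized at shifts determined not only by the $x^{+}x^{-}$ relation but also by the relations of $x_{j}^{\pm}$ with all of $a(z)$ (cf.\ the Corollary following Lemma \ref{nedokazana}, where the surviving shift is $t_{r_0}+1$, $t_{r_0}+\frac32$ or $t_{r_0}+2$ with $r_0$ depending on the whole history of $a(z)$) --- differ exactly by the delta-supported terms. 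That statement is essentially equivalent to the Lemma itself, so your outline assumes what is to be proved.

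For comparison, the paper never invokes any commutator formula. It first disposes of $j_1\neq j_2$ and of the cases where $x_{j}^{\pm}(z)_\bullet a(z)=0$ by means of the structure Lemmas \ref{x-Y}, \ref{x+Y} and \ref{nedokazana}, which pin down the unique shift $t$ at which any zeroth product survives; then, for $j_1=j_2=j$ and $a(z)$ with $x_{j}^{-}(zq^{t})_{0}a(z)\neq 0$, it works entirely inside the free-field realization: the polynomial-cleared identities \eqref{c:1}--\eqref{c:3}, built from Proposition \ref{relations}, combine with the normal-ordering identity \eqref{c:4}, $:x_{j}^{+}(q^{t+1}z)x_{j}^{-}(q^{t}z):\,=\psi_{j}(zq^{t+1/2})$, to give the relation directly at the one relevant shift. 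If you want to salvage your route, the statement you must prove is the residue-exchange claim above, formulated and justified within Li's formalism; by contrast, the bookkeeping of weights and half-integer shifts that you single out as the ``main obstacle'' is routine once that is in place.
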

We also identify a basis $\mathcal{B}_i$
for
the space $\left<\mathcal{Y}_i (z)\right>$ as a set, which consists of some of
the operators
$$x_{l_1}^{-}(z)_\bullet  \cdots x_{l_r}^{-}(z)_\bullet \mathcal{Y}_i (z)
\psi_{j_1}(zq^{t_1})\cdots\psi_{j_s}(zq^{t_s}).$$

In the last, third section we introduce
a certain $\mathbb{C}(
q^{1/2})$ subalgebra $U_{q}(\mathfrak{sl}_{n+1})_z$ of the algebra 
$U_{q}(\mathfrak{sl}_{n+1})[[z_1,\ldots,z_n]]$, given in terms of generators
$\bar{e}_j$, $f_j$, $\bar{k}_j$, $j=1,2,\ldots,n$.
Roughly speaking, this new algebra may be considered as a quantum analogue of
$U(\mathfrak{sl}_{n+1})$, since its classical limit is equal to
$U(\mathfrak{sl}_{n+1})$. Next, we construct some infinite-dimensional
$U_{q}(\mathfrak{sl}_{n+1})_z$-modules $L(\lambda_i)_z$ corresponding to
the (finite-dimensional) irreducible
$U_{q}(\mathfrak{sl}_{n+1})$-modules $L(\lambda_i)$ with the integral dominant
highest weight $\lambda_i$. The main result of this paper is Theorem \ref{main},
which identifies $\left<\mathcal{Y}_i (z)\right>$ as a module for
$U_{q}(\mathfrak{sl}_{n+1})_z$ and establishes an
$U_{q}(\mathfrak{sl}_{n+1})_z$-module isomorphism $L(\lambda_i)_z \cong
\left<\mathcal{Y}_i (z)\right>$:

\makeatletter
\def\thmhead@plain#1#2#3{%
  \thmname{#1}\thmnumber{\@ifnotempty{#1}{ }\@upn{#2}}%
  \thmnote{ {\the\thm@notefont#3}}}
\let\thmhead\thmhead@plain
\makeatother

\begin{thm*}[\textbf{\ref{main}}]
(1) There exists a structure of $U_{q}(\mathfrak{sl}_{n+1})_z$-module on the space $\left<\mathcal{Y}_i (z)\right>$ such that
\begin{align*}
&\bar{e}_j a(z) = x_{j}^{+}(z)_\bullet a(z),\\
&f_j a(z)=x_{j}^{-}(z)_\bullet a(z),\\
&\bar{k}_{j}a(z)=\psi_{j}(z)_\bullet a(z)
\end{align*}
for all $j=1,2,\ldots,n$ and $a(z)\in \left<\mathcal{Y}_i (z)\right>$.

\noindent (2) $U_{q}(\mathfrak{sl}_{n+1})_z$-modules $L(\lambda_i)_z$ and $\left<\mathcal{Y}_i (z)\right>$ are isomorphic.
\end{thm*}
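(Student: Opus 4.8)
The plan is to prove (1) by checking that the assignments $\bar{e}_j \mapsto x_{j}^{+}(z)_\bullet$, $f_j \mapsto x_{j}^{-}(z)_\bullet$, $\bar{k}_j \mapsto \psi_{j}(z)_\bullet$ respect all the defining relations of $U_q(\mathfrak{sl}_{n+1})_z$ when regarded as operators acting on $\left<\mathcal{Y}_i(z)\right>$. Since the action is by definition assembled from the ``$_\bullet$'' products of \eqref{theaction}, the whole content lies in the relations. The mixed commutation relation of the form $[\bar{e}_{j_1},f_{j_2}]=\delta_{j_1 j_2}(\bar{k}_{j_1}-\bar{k}_{j_1}^{-1})/(q-q^{-1})$ is precisely Lemma \ref{drinfeld}; here one must match the term $(\psi_{j_1}(z)-\phi_{j_1}(z))_\bullet$ with the image of $\bar{k}_{j_1}-\bar{k}_{j_1}^{-1}$, which follows once the action of $\phi_{j_1}(z)_\bullet$ is identified with $\bar{k}_{j_1}^{-1}$, read off from the Frenkel--Jing formulas together with the computation of $\psi_j(z)_\bullet$ carried out in Section~2.

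For the remaining relations I would proceed relation by relation. The commutation rules between $\bar{k}_j$ and $\bar{e}_k,f_k$ should reduce to a direct computation of the zeroth product $\psi_j(zq^t)_{0}\,x_{k}^{\pm}(z)$ using the defining commutation relations of the Drinfeld generators, extracting the relevant mode and summing over $t\in\frac{1}{2}\mathbb{Z}$ as prescribed by \eqref{theaction}. The quantum Serre relations among the $\bar{e}_j$'s, and among the $f_j$'s, are the most delicate point; I expect them to follow from the known Serre-type identities satisfied by the Frenkel--Jing currents $x_{j}^{\pm}(z)$, transported through the ``$_\bullet$'' product. This verification is the main obstacle: the ``$_\bullet$'' product is a nonlocal sum over half-integer shifts, so one must ensure that the nested products appearing in the Serre relations remain quasi compatible and that the summations can be rearranged without introducing spurious terms. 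Most of the technical labor of the proof is concentrated here.

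Granting (1), I would prove (2) by a highest weight argument. By construction $\left<\mathcal{Y}_i(z)\right>$ is generated by the single vector $\mathcal{Y}_i(z)$ under the action of the generators, so it suffices to show that $\mathcal{Y}_i(z)$ is a highest weight vector of weight $\lambda_i$: the raising operators annihilate it, $x_{j}^{+}(z)_\bullet \mathcal{Y}_i(z)=0$ for all $j$, while the Cartan operators act diagonally, $\psi_j(z)_\bullet \mathcal{Y}_i(z)=\bar{k}_j\mathcal{Y}_i(z)$ with eigenvalue encoding $\lambda_i$, both facts coming from the explicit Koyama realization of $\mathcal{Y}_i(z)$. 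By the universal property of $L(\lambda_i)_z$ this yields a surjective $U_q(\mathfrak{sl}_{n+1})_z$-module homomorphism $L(\lambda_i)_z \to \left<\mathcal{Y}_i(z)\right>$ carrying the highest weight vector to $\mathcal{Y}_i(z)$. To upgrade surjectivity to an isomorphism I would compare bases: the explicit basis $\mathcal{B}_i$, consisting of the vectors $x_{l_1}^{-}(z)_\bullet \cdots x_{l_r}^{-}(z)_\bullet \mathcal{Y}_i(z)\,\psi_{j_1}(zq^{t_1})\cdots\psi_{j_s}(zq^{t_s})$, should correspond under the homomorphism to a spanning set of $L(\lambda_i)_z$ that is linearly independent by the very construction of $L(\lambda_i)_z$. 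Showing that the homomorphism sends this basis to a basis gives injectivity and hence the isomorphism; the remaining subtlety is to confirm that the combinatorics of the $\psi$-factors recorded in $\mathcal{B}_i$ matches exactly the combinatorics built into $L(\lambda_i)_z$, so that no extra linear relations appear.
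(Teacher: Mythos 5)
Your plan for part (1) rests on a premise the paper explicitly rules out: that $U_q(\mathfrak{sl}_{n+1})_z$ is presented by generators and relations, so that a module structure can be produced by checking a list of relations (mixed commutator, Cartan commutation, Serre relations). The algebra is defined only as the subalgebra of $U_{q}(\mathfrak{sl}_{n+1})[w_1,\ldots,w_n]$ generated by $\bar{e}_j=e_jw_j$, $f_j$, $\bar{k}_j=(K_j-K_j^{-1})w_j$, and the remark following that definition states that in general it \emph{cannot} be described by closed-form expressions among these generators. Hence verifying any particular list of identities for the operators $x_j^{+}(z)_\bullet$, $x_j^{-}(z)_\bullet$, $\psi_j(z)_\bullet$ does not make $\left<\mathcal{Y}_i(z)\right>$ into a module: the kernel of the map from the free algebra onto $U_q(\mathfrak{sl}_{n+1})_z$ is not known to be generated by those identities, so the action need not factor through the algebra. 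Moreover, the specific relation you propose to match with Lemma \ref{drinfeld} is not even well formed: $\bar{k}_j$ is not invertible in $U_{q}(\mathfrak{sl}_{n+1})[w_1,\ldots,w_n]$ (the factor $K_j-K_j^{-1}$ is not a unit), so there is no element $\bar{k}_j^{-1}$; on $\left<\mathcal{Y}_i(z)\right>$ one has $\phi_j(z)_\bullet a(z)=0$ (this is a lemma in the paper), and the counterpart of Lemma \ref{drinfeld} in the algebra is simply $[\bar{e}_j,f_j]=\bar{k}_j/(q-q^{-1})$, with $\phi_j(z)_\bullet$ corresponding to $0$, not to an inverse of $\bar{k}_j$.

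Part (2) has parallel problems. $L(\lambda_i)_z$ is defined concretely as $U_{q}(\mathfrak{sl}_{n+1})_z v_{\lambda_i}\subset L(\lambda_i)[[z_1,\ldots,z_n]]$, not as a universal highest-weight object, so there is no universal property producing a surjection $L(\lambda_i)_z\to\left<\mathcal{Y}_i(z)\right>$ from highest-weight data. Indeed the highest-weight picture itself misfires: $\bar{k}_j$ does not act on $v_{\lambda_i}$ by a scalar but by multiplication by the series $e(u,z_j)$, and correspondingly $\psi_j(z)_\bullet\mathcal{Y}_i(z)$ is a \emph{new} basis vector $\mathcal{Y}_i(z)\psi_j(zq^t)$ (up to a scalar), not a multiple of $\mathcal{Y}_i(z)$ --- this is exactly why the weight spaces are infinite-dimensional. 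The paper's proof avoids both obstacles by proving (1) and (2) simultaneously via transport of structure: it constructs a basis $\mathcal{C}_i$ of $L(\lambda_i)_z$ from vectors $f_{l_1}\cdots f_{l_r}v_{\lambda_i}e(u_1,z_{j_1})\cdots e(u_s,z_{j_s})$, defines an explicit vector-space isomorphism $\Omega\colon L(\lambda_i)_z\to\left<\mathcal{Y}_i(z)\right>$ matching $\mathcal{C}_i$ with the basis $\mathcal{B}_i$ of Theorem \ref{easycor} (compatibly with weights, with the tuples $t(\cdot)$, and with the lexicographic order on $o(\cdot)$), and then checks --- using \eqref{Q3} on the $L(\lambda_i)_z$ side and Lemma \ref{drinfeld} together with the nonvanishing criteria of Lemmas \ref{x-Y}, \ref{x+Y} and \ref{nedokazana} on the other side --- that $\Omega$ intertwines $\bar{e}_j,f_j,\bar{k}_j$ with $x_j^{+}(z)_\bullet,x_j^{-}(z)_\bullet,\psi_j(z)_\bullet$. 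The module structure on $\left<\mathcal{Y}_i(z)\right>$ is thus pulled through $\Omega$ from the concrete module $L(\lambda_i)_z$, which is what circumvents the absence of a presentation. Your basis-matching idea at the end is the germ of this argument, but it must be the engine of the proof, not a supplement to a relations check.
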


\section{Preliminaries}
\subsection{Quantum affine algebra
\texorpdfstring{$U_{q}(\widehat{\mathfrak{sl}}_{n+1})$}{Uq(sl_n+1)}} We recall
some facts from the theory of affine Kac-Moody Lie algebras (see \cite{Kac} for more details).
Let $\widehat{A}=(a_{ij})_{i,j=0}^{n}$ be a generalized Cartan matrix of (affine)
type $A_{n}^{(1)}$ associated with the affine Kac-Moody Lie algebra
$\widehat{\mathfrak{sl}}_{n+1}$.
Let $\widehat{\mathfrak{h}}\subset \widehat{\mathfrak{sl}}_{n+1}$ be a vector space
over $\mathbb{C}$ with a basis
consisting of simple coroots $\alpha^{\vee}_{j}$, $j=0,1,\ldots,n$, and
derivation $d$.
Denote by $\alpha_{0}, \alpha_{1}, \ldots, \alpha_{n}$ simple roots,
i.e. linear functionals from $\widehat{\mathfrak{h}}^{*}$  such that
$$\alpha_{i}(\alpha_{j}^{\vee})=a_{ji},\quad\alpha_{i}(d)=\delta_{i0},\quad
i,j=0,1,\ldots,n.$$ 
The invariant symmetric bilinear form on
$\widehat{\mathfrak{h}}^{*}$ is given by $$(\alpha_{i},\alpha_{j})=a_{ij},\quad
(\delta,\alpha_{i})=(\delta,\delta)=0,\quad i,j=0,1,\ldots,n.$$

Denote by $\Lambda_{0}, \Lambda_{1}, \ldots, \Lambda_{n}$
fundamental weights,  elements of $\widehat{\mathfrak{h}}^{*}$ such that
$$\Lambda_{i}(\alpha_{j}^{\vee})=\delta_{ij},\quad
\Lambda_{i}(d)=0,\quad i,j=0,1,\ldots,n.$$
The center of the Lie algebra $\widehat{\mathfrak{sl}}_{n+1}$ is one-dimensional and it is generated by the element
$$c=\alpha_{0}^{\vee}+\alpha_{1}^{\vee}+\ldots+\alpha_{n}^{\vee}\in \widehat{\mathfrak{h}}$$
and imaginary roots of $\widehat{\mathfrak{sl}}_{n+1}$ are integer multiples of
$$\delta=\alpha_{0}+\alpha_{1}+\ldots+\alpha_{n}\in \widehat{\mathfrak{h}}^{*}.$$
Define a weight lattice $\widehat{P}$ as a free Abelian group generated by 
the elements $\Lambda_{0}, \Lambda_{1}, \ldots, \Lambda_{n}$ and $\delta$.
An integral dominant weight is any   $\Lambda\in \widehat{P}$
such that  $\Lambda(\alpha_{i}^{\vee})\in\mathbb{Z}_{\geq 0}$  for
$i=0,1,\ldots,n$.

Let $\mathfrak{h}\subset\widehat{\mathfrak{h}}$ be Cartan subalgebra of the simple
Lie algebra $\mathfrak{sl}_{n+1}$,  generated by the elements
$\alpha_{1}^{\vee}$, $\alpha_{2}^{\vee}$, \ldots, $\alpha_{n}^{\vee}$.
Denote by 
$$Q=\bigoplus_{i=1}^{n}\mathbb{Z}\alpha_{i}\subset\mathfrak{h}^{*}\quad\textrm{and}\quad P=\bigoplus_{i=1}^{n}\mathbb{Z}\lambda_{i}\subset\mathfrak{h}^{*}$$ 
the classical root lattice and the classical weight lattice, where elements $\lambda_{i}$ 
satisfy $$\lambda_{i}(\alpha_{j}^{\vee})=\delta_{ij},\quad i,j=1,2,\ldots,n.$$

Fix an indeterminate $q$.
For any two integers $m$ and $k$, $k>0$,   define $q$-integers,
 $$[m]=\frac{q^{m}-q^{-m}}{q-q^{-1}}, $$
 and $q$-factorials,
 $$[0] !=1,\quad [k] !=[k][k-1]\cdots[1].$$
 For all nonnegative integers $m$ and $k$, $m\geq k$, define $q$-binomial coefficients,
 $$\gauss{m}{k}=\frac{[m]!}{[k]![m-k]!}.$$

 \begin{defn}\label{quantum}
 The quantized enveloping algebra $U_{q}(\mathfrak{sl}_{n+1})$ is the
 associative algebra over $\mathbb{C}(q^{1/2})$ with unit $1$ generated by the elements
$e_i$, $f_i$ and $K_{i}^{\pm 1}$, $i=1,2,\ldots,n$, subject to the following relations:
\begin{align}
& K_i K_j=K_j K_i,\quad K_i K_{i}^{-1}=K_{i}^{-1}K_{i}=1,\tag{Q1}\label{Q1}\\
&K_i e_j K_{i}^{-1}=q^{(\alpha_i,\alpha_j)}e_j,\quad K_i f_j K_{i}^{-1}=q^{-(\alpha_i,\alpha_j)}f_j,\tag{Q2}\label{Q2}\\
&[e_i,f_j]=\delta_{ij}\frac{K_i - K_{i}^{-1}}{q -q^{-1}},\tag{Q3}\label{Q3}\\
&\sum_{s=0}^{m}(-1)^s \gauss{m}{s} e_{i}^{m-s} e_j e_{i}^{s}=0,\quad\sum_{s=0}^{m}(-1)^s \gauss{m}{s} f_{i}^{m-s} f_j f_{i}^{s}=0 \quad\text{for }i\neq j,\tag{Q4}\label{Q4}
\end{align}
where $m=1-a_{ij}$.
 \end{defn}

Next, we  present  Drinfeld realization of the quantum affine algebra
$U_{q}(\widehat{\mathfrak{sl}}_{n+1})$.

\begin{defn}[(\cite{D})]\label{drinfeld}
The quantum affine algebra $U_{q}(\widehat{\mathfrak{sl}}_{n+1})$ is the associative algebra over $\mathbb{C}(q^{1/2})$ with unit $1$ generated by the 
elements
$x_{i}^{\pm}(r)$, $a_{i}(s)$, $K_{i}^{\pm 1}$, $\gamma^{\pm 1/2}$ and $q^{\pm d}$, $i=1,2,\ldots,n$, $r,s\in\mathbb{Z}$, $s\neq 0$, subject to the following relations:
\begin{align}
& [\gamma^{\pm1/2},u]=0\textrm{ for all }u\in U_{q}(\widehat{\mathfrak{g}}),\tag{D1}\label{D1}\\
& K_i K_j=K_j K_i,\quad K_i K_{i}^{-1}=K_{i}^{-1}K_{i}=1,\tag{D2}\label{D2}\\
& [a_{i}(k),a_{j}(l)]=\delta_{k+l\hspace{2pt}0}\frac{[a_{ij}k]}{k}\frac{\gamma^{k}-\gamma^{-k}}{q-q^{-1}},\tag{D3}\label{D3}\\
& [a_{i}(k),K_{j}^{\pm 1}]=[q^{\pm d},K_{j}^{\pm 1}]=0,\tag{D4}\label{D4}\\
& q^{d}x_{i}^{\pm}(k)q^{-d}=q^{k}x_{i}^{\pm}(k),\quad q^{d}a_{i}(k)q^{-d}=q^{k}a_{i}(k),\tag{D5}\label{D5}\\
& K_{i}x_{j}^{\pm}(k)K_{i}^{-1}=q^{\pm (\alpha_{i},\alpha_{j})}x_{j}^{\pm }(k) ,\tag{D6}\label{D6}\\
& [a_{i}(k),x_{j}^{\pm}(l)]=\pm\frac{[a_{ij}k]}{k}\gamma^{\mp
|k|/2}x_{j}^{\pm}(k+l),\tag{D7}\label{D7}\\
& x_{i}^{\pm}(k+1)x_{j}^{\pm}(l)-q^{\pm(\alpha_i,\alpha_j)}x_{j}^{\pm}(l)x_{i}^{\pm}(k+1)\nonumber\\
&\hspace{20pt}=q^{\pm(\alpha_i,\alpha_j)}x_{i}^{\pm}(k)x_{j}^{\pm}(l+1)-x_{j}^{\pm}(l+1)x_{i}^{\pm}(k),\tag{D8}\label{D8}\\
& [x_{i}^{+}(k),x_{j}^{-}(l)]=\frac{\delta_{ij}}{q-q^{-1}}\left(\gamma^{\frac{k-l}{2}}\psi_{i}(k+l)-\gamma^{\frac{l-k}{2}}\phi_{i}(k+l)\right),\tag{D9}\label{D9}\\
& \sym_{l_1,l_2,\ldots,l_m}\sum_{s=0}^{m}(-1)^{s}\gauss{m}{s}x_{i}^{\pm}(l_{1})\cdots x_{i}^{\pm}(l_{s})x_{j}^{\pm}(k)x_{i}^{\pm}(l_{s+1})\cdots x_{i}^{\pm}(l_{m})=0 \quad\textrm{for }i\neq j,\tag{D10}\label{D10}
\end{align}
where $m=1-a_{ij}$.
The elements $\phi_{i}(-r)$ and $\psi_{i}(r)$, $r\in\mathbb{Z}_{\geq 0}$, are given by \\
\begin{align*}
& \phi_{i}(z)=\sum_{r=0}^{\infty}\phi_{i}(-r)z^{r}=K_{i}^{-1}\exp\left(-(q-q^{-1})\sum_{r=1}^{\infty}a_{i}(-r)z^{r}\right),\\
& \psi_{i}(z)=\sum_{r=0}^{\infty}\psi_{i}(r)z^{-r}=K_{i}\exp\left((q-q^{-1})\sum_{r=1}^{\infty}a_{i}(r)z^{-r}\right).
\end{align*}
\end{defn}

Denote by $x_{i}^{\pm}(z)$ the series
\begin{equation}\label{101_exp:series}
x_{i}^{\pm}(z)=\sum_{r\in\mathbb{Z}}x_{i}^{\pm}(r)z^{-r-1}\in U_{q}(\mathfrak{\widehat{\mathfrak{sl}}_{n+1}})[[z^{\pm 1}]].
\end{equation}
We shall continue to use the notation $x_{i}^{\pm}(z)$ for the action of the expression (\ref{101_exp:series}) on an arbitrary
$U_{q}(\widehat{\mathfrak{sl}}_{n+1})$-module $V$:
$$x_{i}^{\pm}(z)=\sum_{r\in\mathbb{Z}}x_{i}^{\pm}(r)z^{-r-1}\in (\ndo V)[[z^{\pm 1}]].$$
Some basic facts about  $U_{q}(\widehat{\mathfrak{sl}}_{n+1})$ (and its representation
theory) can be found in \cite{HK}.


\subsection{Frenkel-Jing realization}We present  Frenkel-Jing realization of  the integrable highest weight
 $U_{q}(\widehat{\mathfrak{sl}}_{n+1})$-modules $L(\Lambda_{i})$,
 $i=0,1,\ldots,n$ (see \cite{FJ}).

Let $V$ be an arbitrary $U_{q}(\widehat{\mathfrak{sl}}_{n+1})$-module of
level $c$.
The Heisenberg algebra  $U_{q}(\widehat{\mathfrak{h}})$ of level $c$ is generated by the elements $a_{i}(k)$, $i=1,2,\ldots,n$, $k\in\mathbb{Z}\setminus\left\{0\right\}$ and the central element $\gamma^{\pm 1}=q^{\pm c}$ subject to the relations 
\begin{equation}\label{104_heisenberg}
[a_{i}(r),a_{j}(s)]=\delta_{r+s\hspace{2pt}0}\frac{[a_{ij}r][cr]}{r},\quad
i,j=1,2,\ldots,n,\, r,s\in\mathbb{Z}\setminus\left\{0\right\}.
\end{equation}


The Heisenberg algebra $U_{q}(\widehat{\mathfrak{h}})$ has a natural realization on the space $\sym(\widehat{\mathfrak{h}}^{-})$
of the symmetric algebra generated by the elements $a_{i}(-r)$, 
  $r\in\mathbb{Z}_{>0}$, $i=1,2,\ldots,n$, via the following rule:
  \begin{align*}
\gamma^{\pm 1}\hspace{5pt}&\ldots\hspace{5pt}\textrm{multiplication by }q^{\pm c},\\
a_{i}(r)\hspace{5pt}&\ldots\hspace{5pt}\textrm{differentiation operator subject to (\ref{104_heisenberg})},\\
a_{i}(-r)\hspace{5pt}&\ldots\hspace{5pt}\textrm{multiplication by the element }a_{i}(-r).
\end{align*}
Denote the resulted irreducible $U_{q}(\widehat{\mathfrak{h}})$-module
 as $K(c)$. Define the following operators on $K(c)$:
 \begin{align*}
 &E_{-}^{\pm}(a_{i},z)=\exp\left(\mp\sum_{r\geq 1}\frac{q^{\mp cr/2}}{[cr]}a_{i}(-r)z^{r}\right),\\
  &E_{+}^{\pm}(a_{i},z)=\exp\left(\pm\sum_{r\geq 1}\frac{q^{\mp cr/2}}{[cr]}a_{i}(r)z^{-r}\right).
\end{align*}

The associative algebra $\mathbb{C}\left\{P\right\}$ 
is generated by the elements $e^{\alpha_2},e^{\alpha_{3}},\ldots e^{\alpha_{n}}$ and $e^{\lambda_{n}}$ subject to the relations
\begin{align*}
e^{\alpha_{i}}e^{\alpha_{j}}=(-1)^{(\alpha_i,\alpha_j)}e^{\alpha_{j}}e^{\alpha_{i}},\quad
e^{\alpha_{i}}e^{\lambda_{n}}=(-1)^{\delta_{in}}e^{\lambda_{n}}e^{\alpha_{i}},\qquad
i,j=2,3,\ldots,n.
\end{align*}
For 
$\alpha=m_{2}\alpha_2+\ldots+m_{n}\alpha_{n}+m_{n+1}\lambda_{n}\in P$
denote $e^{m_{2}\alpha_2}\cdots e^{m_{n}\alpha_{n}}e^{m_{n+1}\lambda_{n}}\in \mathbb{C}\left\{P\right\}$ by
 $e^{\alpha}$. 
Denote by $\mathbb{C}\left\{Q\right\}$ the subalgebra of $\mathbb{C}\left\{P\right\}$ generated by the elements $e^{\alpha_i}$, $i=1,2,\ldots,n$.
Set
$$\mathbb{C}\left\{Q\right\}e^{\lambda_i}=\left\{ae^{\lambda_{i}}\hspace{2pt}|\hspace{2pt}a\in\mathbb{C}\left\{Q\right\}\right\}.$$
For $\alpha\in Q$ define an action  $z^{\partial_\alpha}$ on $\mathbb{C}\left\{Q\right\}e^{\lambda_i}$ by
$$z^{\partial_\alpha}e^{\beta}e^{\lambda_i}=z^{(\alpha,\beta+\lambda_i)}e^{\beta}e^{\lambda_i}.$$

\begin{thm}[(\cite{FJ})]
 By the action 
\begin{align*}
x_{j}^{\pm}(z)&:=E_{-}^{\pm}(-a_{j},z)E_{+}^{\pm}(-a_{j},z)\otimes e^{\pm\alpha_{j}}z^{\pm\partial_{\alpha_{j}}},
\end{align*}
$j=1,2,\ldots,n$, 
the space $$K(1)\otimes\mathbb{C}\left\{Q\right\}e^{\lambda_i}$$ becomes
the integrable highest weight module of $U_{q}(\widehat{\mathfrak{sl}}_{n+1})$  with the highest weight $\Lambda_i$.
\end{thm}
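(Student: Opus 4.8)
The plan is to verify that the prescribed assignments satisfy the defining relations \eqref{D1}--\eqref{D10} of the Drinfeld presentation, and then to identify the resulting representation as the integrable highest weight module $L(\Lambda_i)$. The central element $\gamma^{\pm 1}$ acts as $q^{\pm 1}$ (level one), the Heisenberg generators $a_i(r)$ act through the realization on $K(1)$ tensored with the identity on $\mathbb{C}\{Q\}e^{\lambda_i}$, the operators $K_i$ and $q^{\pm d}$ act diagonally on the two tensor factors, and $x_j^{\pm}(z)$ is the vertex operator displayed in the statement. With these assignments, relations \eqref{D1}, \eqref{D2}, \eqref{D4}, \eqref{D5} are immediate from the definitions of the actions on the separate tensor factors, \eqref{D3} holds by the very construction of the Fock space $K(1)$ as a module over the Heisenberg algebra \eqref{104_heisenberg} at $c=1$, and \eqref{D6} follows because $K_i$ acts on $\mathbb{C}\{Q\}e^{\lambda_i}$ so that $K_i e^{\pm\alpha_j}K_i^{-1}$ contributes exactly the scalar $q^{\pm(\alpha_i,\alpha_j)}$.

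First I would dispose of \eqref{D7}. Since $x_j^{\pm}(z)$ is, up to the lattice factor, an exponential in the $a_j(\mp r)$, the commutator $[a_i(k),x_j^{\pm}(z)]$ is computed by the elementary identity $[A,e^{B}]=[A,B]\,e^{B}$, valid when $[A,B]$ is central; using \eqref{104_heisenberg} this produces precisely the scalar $\pm\frac{[a_{ij}k]}{k}\gamma^{\mp|k|/2}z^{k}$ multiplying $x_j^{\pm}(z)$, which is the generating-series form of \eqref{D7}. The heart of the argument is the normal-ordering computation. Using the Baker--Campbell--Hausdorff identity $e^{A}e^{B}=e^{[A,B]}e^{B}e^{A}$ for the Heisenberg exponentials, together with the cocycle relations governing $e^{\pm\alpha_j}z^{\pm\partial_{\alpha_j}}$ on $\mathbb{C}\{P\}$, I would establish operator product identities of the form
\begin{equation*}
x_i^{\epsilon}(z_1)\,x_j^{\epsilon'}(z_2)=g_{ij}^{\epsilon\epsilon'}(z_2/z_1)\,{:}x_i^{\epsilon}(z_1)x_j^{\epsilon'}(z_2){:},\qquad \epsilon,\epsilon'\in\{+,-\},
\end{equation*}
where the scalar prefactor $g_{ij}^{\epsilon\epsilon'}$ is obtained by resumming a series $\exp\bigl(\sum_{r\geq1}\frac{[a_{ij}r]}{r[r]}\,q^{\cdots r}(z_2/z_1)^{r}\bigr)$ into an explicit rational function, whose zeros and poles sit at the $q$-shifted points $z_1=q^{\pm(\alpha_i,\alpha_j)}z_2$ and whose exact shape is dictated by $[a_{ij}r]/[r]\in\{q^{r}+q^{-r},-1,0\}$.

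Granting these expansions, relation \eqref{D8} reduces to the functional equation
\begin{equation*}
(z_1-q^{\epsilon(\alpha_i,\alpha_j)}z_2)\,g_{ij}^{\epsilon\epsilon}(z_2/z_1)=(q^{\epsilon(\alpha_i,\alpha_j)}z_1-z_2)\,g_{ji}^{\epsilon\epsilon}(z_1/z_2),
\end{equation*}
obtained after dividing out the (order-independent) normal-ordered product, and the Serre relations \eqref{D10} follow from the vanishing of a symmetrized rational function with the requisite zeros. The main obstacle, and the step demanding the most care, is \eqref{D9}: one computes both orderings $x_i^{+}(z_1)x_j^{-}(z_2)$ and $x_j^{-}(z_2)x_i^{+}(z_1)$ and observes that for $i\neq j$ the prefactor is a Laurent polynomial, so the two orderings agree and the bracket vanishes (yielding the factor $\delta_{ij}$), whereas for $i=j$ both prefactors are expansions of one rational function with simple poles at $z_1=q^{\pm c}z_2$ in the regions $|z_1|>|z_2|$ and $|z_2|>|z_1|$. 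Taking the difference produces the two formal delta distributions supported at these poles; evaluating the regular normal-ordered factor there and recognizing the resulting exponentials as $K_i^{\pm1}\exp\bigl(\pm(q-q^{-1})\sum a_i(\pm r)z^{\mp r}\bigr)$ gives exactly $\bigl(\gamma^{(k-l)/2}\psi_i-\gamma^{(l-k)/2}\phi_i\bigr)/(q-q^{-1})$. Correctly matching the half-integer powers $\gamma^{\pm|k|/2}$ and the cocycle signs of $\mathbb{C}\{P\}$ is where essentially all of the bookkeeping lives.

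Finally I would identify the module. The vector $v=1\otimes e^{\lambda_i}$ is annihilated by $a_i(r)$ for $r>0$ and by the positive modes of the $x_j^{+}(z)$, it is a common eigenvector of $K_i$, $\gamma$ and $q^{d}$ with eigenvalues prescribing the weight $\Lambda_i$, and it generates the whole space under the action just verified. Integrability follows either by exhibiting the local nilpotence of the Chevalley generators in the equivalent Kac--Moody presentation, or, more efficiently, by comparing the graded character of $K(1)\otimes\mathbb{C}\{Q\}e^{\lambda_i}$, a product of the bosonic Fock-space character and the lattice theta function, with the principally specialized Weyl--Kac character of $L(\Lambda_i)$ and invoking the classification of level one integrable highest weight modules.
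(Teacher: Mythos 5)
This theorem is recalled from Frenkel--Jing \cite{FJ}; the paper gives no proof of its own, only the citation. Your outline --- verifying the Drinfeld relations \eqref{D1}--\eqref{D10} by normal-ordered operator product computations (with the prefactors determined by $[a_{ij}r]/[r]$), extracting $\psi_i$ and $\phi_i$ from the delta-function terms in \eqref{D9}, and then identifying the module via the highest weight vector $1\otimes e^{\lambda_i}$ together with a character comparison --- is precisely the standard free-field argument of the cited source, so it is correct in approach and matches the proof the paper relies on.
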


\subsection{Operator \texorpdfstring{$\mathcal{Y}_i(z)$}{Yi(z)}}
In \cite{Koyama} Koyama found a realization of vertex operators for  level one integrable
highest weight modules of $U_{q}(\widehat{\mathfrak{sl}}_{n+1})$. In
\cite{Kozic} we used  similar operators $\mathcal{Y}_{i}(z)=\mathcal{Y}(e^{\lambda_i},z)$, which we now briefly recall.
Let
$$V=K(1)\otimes \mathbb{C}\left\{P\right\}.$$
For $i=1,2,\ldots,n$ and $l\in\mathbb{Z}$, $l\neq 0$, we define elements
$a_{i}^{*}(l)\in U_{q}(\widehat{\mathfrak{h}})$ by
\begin{align*}
a_{i}^{*}(l)=m_{i}^{(1)}a_{1}(l)+m_{i}^{(2)}a_{2}(l)+\ldots
+m_{i}^{(n)}a_{n}(l),
\end{align*}
where
\begin{align*}
m_{i}^{(j)}=\left\{\begin{array}{l@{\,\ }l}
 \displaystyle\frac{[jl][(n-i+1)l]}{[(n+1)l][l]} &\textrm{ for } j\leq
 i;\vspace{5pt}\\
 \displaystyle\frac{[il][(n-j+1)l]}{[(n+1)l][l]}  &\textrm{ for }j>i.
\end{array}\right.\nonumber
\end{align*}
For $i,j=1,2,\ldots,n,\, l,k\in\mathbb{Z},\, l,k\neq 0$ we have
\begin{equation*}
[a_{i}^{*}(l),a_{j}(k)]=\delta_{ij}\delta_{l+k\hspace{2pt}0}\frac{[l]^{2}}{l}.
\end{equation*}
Define the following operators on the space $V$:
\begin{align*}
E_{-}(a_{i}^{*},z)&=\exp\left(\sum_{r=1}^{\infty}\frac{q^{r/2}}{[r]}a_{i}^{*}(-r)z^{r}\right),\\
E_{+}(a_{i}^{*},z)&=\exp\left(-\sum_{r=1}^{\infty}\frac{q^{r/2}}{[r]}a_{i}^{*}(r)z^{-r}\right).
\end{align*}

\begin{defn}\label{Y}
We define an operator 
$\mathcal{Y}_i(z)\in \om(V,V((z^{1/(n+1)})))$
by
\begin{align*}
\mathcal{Y}_i(z)=E_{-}(a_{i}^{*},z)E_{+}(a_{i}^{*},z)
\otimes e^{\lambda_{i}}(-1)^{(1-\delta_{in})i \partial_{\lambda_{n}}}z^{\partial_{\lambda_{i}}}.
\end{align*}
\end{defn}
By applying the operator $\mathcal{Y}_i(z)$ on an arbitrary vector $v\in V$, we
get a formal power series in fractional powers $z^{\frac{1}{n+1}}$ of the
variable $z$, that has only a finite number of negative powers.

The relations in the next proposition can be proved by a direct calculation. 
Some of them can be found in \cite{DM} and \cite{Kozic}. 

\begin{pro}\label{relations}
For any  $i,j=1,2,\ldots,n$ the following relations hold on $V=K(1)\otimes
\mathbb{C}\left\{P\right\}$:
{\allowdisplaybreaks\begin{align}
& x_{i}^{-}(z_1)x_{i}^{-}(z_2)=(z_1-z_2)(z_1-q^{2}z_2):x_{i}^{-}(z_1)x_{i}^{-}(z_2):\label{r1}\\
& (z_1-qz_2)x_{i}^{-}(z_1)x_{j}^{-}(z_2)=:x_{i}^{-}(z_1)x_{j}^{-}(z_2):\quad\text{for }|i-j|=1\label{r2}\\
& x_{i}^{-}(z_1)x_{j}^{-}(z_2)=:x_{i}^{-}(z_1)x_{j}^{-}(z_2):\quad\text{for }|i-j|>1\label{r3}\\
& (z_1-qz_2)x_{i}^{-}(z_1)\mathcal{Y}_{i}(z_2)=:x_{i}^{-}(z_1)\mathcal{Y}_{i}(z_2):\label{r4}\\
& x_{i}^{-}(z_1)\mathcal{Y}_{j}(z_2)=:x_{i}^{-}(z_1)\mathcal{Y}_{j}(z_2):\quad\text{for }i\neq j\label{r5}\\
& ( z_1 -q^{-3/2}z_2)\psi_{i}(z_1)x_{i}^{-}(z_2)=(q^{-2}z_1-q^{1/2}z_2)x_{i}^{-}(z_1)\psi_{i}(z_2)\label{r6}\\
& (z_1 -q^{3/2}z_2)\psi_{i}(z_1)x_{j}^{-}(z_2)=(qz_1-q^{1/2}z_2)x_{j}^{-}(z_1)\psi_{i}(z_2)\quad\text{for }|i-j|=1\label{r7}\\
& \psi_{i}(z_1)x_{j}^{-}(z_2)=x_{j}^{-}(z_1)\psi_{i}(z_2)\quad\text{for }|i-j|>1\label{r8}\\
& (z_1-q^{3/2}z_2)\psi_{i}(z_1)\mathcal{Y}_{i}(z_2)=(qz_1-q^{1/2}z_2)\mathcal{Y}_{i}(z_2)\psi_{i}(z_1)\label{r9}\\
& \psi_{i}(z_1)\mathcal{Y}_{j}(z_2)=\mathcal{Y}_{j}(z_2)\psi_{i}(z_1)\quad\text{for }i\neq j\label{r10}\\
& (z_1-qz_2)(z_1-q^{-1}z_2)x_{i}^{+}(z_1)x_{i}^{-}(z_2)=:x_{i}^{+}(z_1)x_{i}^{-}(z_2):\label{r11}\\
& x_{i}^{+}(z_1)x_{j}^{-}(z_2)=(z_1-z_2):x_{i}^{+}(z_1)x_{j}^{-}(z_2):\quad\text{for }|i-j|=1\label{r12}\\
& x_{i}^{+}(z_1)x_{j}^{-}(z_2)=:x_{i}^{+}(z_1)x_{j}^{-}(z_2):\quad\text{for }|i-j|>1\label{r13}\\
& x_{i}^{+}(z_1)\mathcal{Y}_{i}(z_2)=(z_1-z_2):x_{i}^{+}(z_1)\mathcal{Y}_{i}(z_2):\label{r14}\\
& x_{i}^{+}(z_1)\mathcal{Y}_{j}(z_2)=:x_{i}^{+}(z_1)\mathcal{Y}_{j}(z_2):\quad\text{for }i\neq j.\label{r15}
\end{align}}
\end{pro}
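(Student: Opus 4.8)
The plan is to prove each identity \eqref{r1}--\eqref{r15} by the standard contraction technique for lattice-type vertex operators, exploiting that every operator in the list factorizes as a product of three pieces that commute with one another up to scalars: a \emph{creation} exponential built from the negative modes $a_i(-r)$ (resp.\ $a_i^*(-r)$), an \emph{annihilation} exponential built from the positive modes $a_i(r)$ (resp.\ $a_i^*(r)$), and a \emph{lattice} part in $\mathbb{C}\{P\}$ of the shape $e^{\mu}z^{\partial_{\mu}}$, together with a $K_i=q^{\alpha_i}$ factor in the case of $\psi_i(z)$ and $\phi_i(z)$. Note that $\phi_i(z)$ contributes only a creation exponential and $\psi_i(z)$ only an annihilation exponential. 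To bring a product $A(z_1)B(z_2)$ into its normal-ordered form $:\!A(z_1)B(z_2)\!:$ one moves the annihilation exponential of $A(z_1)$ to the right of the creation exponential of $B(z_2)$ and reorders the two lattice parts; every other reordering is trivial, since positive modes (resp.\ negative modes) mutually commute.

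The bosonic contraction is governed by the single observation that $[a_i(r),a_j(s)]$ — and likewise $[a_i^*(l),a_j(k)]$ — is a scalar, so that for exponents $A,B$ with central commutator one has $e^{A}e^{B}=e^{[A,B]}e^{B}e^{A}$. Hence moving the annihilation part of $A(z_1)$ past the creation part of $B(z_2)$ produces the scalar $\exp([A,B])$, where $[A,B]=\sum_{r\geq 1}c_r (z_2/z_1)^r$ has coefficients read off from \eqref{104_heisenberg} at level $c=1$ (so $\gamma=q$) and from $[a_i^*(l),a_j(k)]=\delta_{ij}\delta_{l+k\,0}[l]^2/l$. Using $[2r]=(q^r+q^{-r})[r]$ and the expansion $-\log(1-x)=\sum_{r\geq 1}x^r/r$, each such series telescopes to a logarithm and the scalar becomes a ratio of factors of the form $1-q^{a}z_2/z_1$. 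The Kronecker deltas $\delta_{ij}$ (for $\mathcal{Y}_i$) and the factor $[a_{ij}r]$, which vanishes when $|i-j|>1$ because then $a_{ij}=0$, are exactly what makes the ``$i\neq j$'' and ``$|i-j|>1$'' cases \eqref{r3}, \eqref{r5}, \eqref{r8}, \eqref{r10}, \eqref{r13}, \eqref{r15} collapse to bare normal-ordered products, while $a_{ii}=2$ (giving $[2r]$) and $a_{ij}=-1$ for $|i-j|=1$ produce the double and single linear prefactors seen elsewhere.

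The lattice contraction is purely combinatorial: from $z^{\partial_{\alpha}}e^{\beta}e^{\lambda_i}=z^{(\alpha,\beta+\lambda_i)}e^{\beta}e^{\lambda_i}$ one extracts an integral power of $z_1/z_2$, from the cocycle relations $e^{\alpha_i}e^{\alpha_j}=(-1)^{(\alpha_i,\alpha_j)}e^{\alpha_j}e^{\alpha_i}$ (and the $e^{\lambda_n}$ relations) one extracts a sign, and reordering $K_i$ past $e^{\pm\alpha_i}$ produces $q^{\pm(\alpha_i,\alpha_i)}$; for $\mathcal{Y}_i(z)$ the extra factor $(-1)^{(1-\delta_{in})i\partial_{\lambda_n}}$ must be carried along as well. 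Multiplying the rational bosonic scalar by this monomial in $z_1/z_2$ converts each $1-q^{a}z_2/z_1$ into $z_1-q^{a}z_2$ up to an overall power of $z_1$, which is precisely the shape of the prefactors in \eqref{r1}--\eqref{r15}.

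Finally, the commutation-type relations \eqref{r6}--\eqref{r10} are verified by running the computation for each product separately and comparing the two resulting rational-function-valued operators, using that a normal-ordered expression such as $:\!\psi_i(z_1)x_i^-(z_2)\!:$ is independent of the order of its factors, so that both sides can be brought to a common normal-ordered form after a relabeling of variables. I expect the only genuine obstacle to be the bookkeeping: correctly summing the bosonic contraction series — in particular tracking the half-integer powers $q^{\pm r/2}$ coming from the exponentials $E_{\pm}^{\pm}(a_i,z)$ and $E_{\pm}(a_i^*,z)$ together with the level-one specialization $\gamma=q$ — and keeping the cocycle signs consistent across all fifteen cases. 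Once the contraction lemma $e^Ae^B=e^{[A,B]}e^Be^A$ is in place, each relation reduces to a single series evaluation plus one monomial-and-sign count.
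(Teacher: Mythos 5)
Your proposal is correct and is essentially the paper's own approach: the paper offers no detailed argument at all, stating only that the relations ``can be proved by a direct calculation'' (citing \cite{DM} and \cite{Kozic} for some of them), and the calculation it has in mind is exactly the contraction scheme you describe --- the identity $e^{A}e^{B}=e^{[A,B]}e^{B}e^{A}$ for central $[A,B]$ applied to the Heisenberg exponentials at level $c=1$ (so $\gamma=q$), with the series summed to logarithms via $[2r]=(q^{r}+q^{-r})[r]$, followed by the lattice/cocycle and $K_i$ bookkeeping. One practical caution for your ``relabeling'' step in \eqref{r6}--\eqref{r8}: carrying out the contraction shows that both sides are proportional to the \emph{same} normal-ordered operator only when the right-hand sides are read as $x_{j}^{-}(z_2)\psi_{i}(z_1)$ (matching the variable pattern of \eqref{r9}), whereas with the printed attachment $x_{j}^{-}(z_1)\psi_{i}(z_2)$ the two sides carry the $x^{-}$-modes in different variables, so your computation will in fact prove the corrected form rather than the literal one.
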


\section{Space \texorpdfstring{$\left<\mathcal{Y}_i (z)\right>$}{<Yi(z)>}}
\subsection{The action of Frenkel-Jing operators}
Let $V$ be an arbitrary vector space over $\mathbb{C}(q^{1/2})$. We recall two definitions from \cite{Li}.

\begin{defn}\label{quasi:comp}
An ordered pair $(a(z),b(z))$ in $\om(V,V((z)))$ is said to be quasi compatible if there exist a nonzero polynomial 
$p(z_1,z_2)\in\mathbb{C}(q^{1/2})[z_1,z_2]$
such that
$$p(z_1,z_2)a(z_1)b(z_2)\in\om (V,V((z_1,z_2))).$$
\end{defn}

Denote by $\mathbb{C}(q^{1/2})_{*}(z,z_0)$ the extension of the algebra
$\mathbb{C}(q^{1/2})[[z,z_0]]$ by inverses of nonzero polynomials. Let 
$\iota_{z,z_0}$ be a unique algebra embedding $$\iota_{z,z_0}\colon
\mathbb{C}(q^{1/2})_{*}(z,z_0)\to \mathbb{C}(q^{1/2})((z))((z_0))$$
that extends the identity endomorphism of $\mathbb{C}(q^{1/2})[[z,z_0]]$.

\begin{defn}\label{n:product}
Let $(a(z),b(z))$ be a quasi compatible (ordered) pair in $\om(V,V((z)))$. For $r\in\mathbb{Z}$
 we define
$a(z)_{r}b(z)\in (\ndo V)[[z^{\pm 1}]]$ in terms of generating function
$$Y_{\mathcal{E}}(a(z),z_0)b(z)=\sum_{r\in\mathbb{Z}} (a(z)_{r}b(z))z_{0}^{-r-1}\in (\ndo V)[[z_{0}^{\pm 1}, z^{\pm 1}]]$$
by
\begin{align}\label{ramones}
& Y_{\mathcal{E}}(a(z),z_0)b(z)
=\iota_{z,z_0}\left(p(z_0 +
z,z)^{-1}\right)(p(z_1,z)a(z_1)b(z))\bigl.
\bigr|_{z_1 = z+z_{0}},
\end{align}
where $p(z_1,z_2)\in\mathbb{C}(q^{1/2})[z_1,z_2]$ is any nonzero polynomial
such that
\begin{equation*}
p(z_1,z_2)a(z_1)b(z_2)\in \om(V,V((z_1,z_2))).
\end{equation*}
\end{defn}

The Proposition \ref{relations} implies that the each ordered pair consisting of the operators 
$x_{j}^{\pm}(z)$, $\psi_{j}(z)$, $\phi_{j}(z)$  is
quasi compatible, so the $r$th product, $r\in\mathbb{Z}$, (given by Definition
\ref{n:product}) is well defined for such a pair. 

\begin{rem}
The expressions \eqref{r4}, \eqref{r5}, \eqref{r9}, \eqref{r10}, \eqref{r14},
\eqref{r15} in Proposition \ref{relations} are not elements of
$\om(V,V((z_1,z_2)))$, because they consist of rational powers of $z_2$.
However, they are obviously contained in
$\om(V,V((z_1,\textstyle z_{2}^{1/(n+1)})))$,
so Definition \ref{n:product} may be applied. More precisely, 
the $r$th products
$a(z)_{r}\mathcal{Y}_{i}(z)$ for $a(z)=x_{j}^{\pm}(z),\psi_{j}(z),\phi_{j}(z)$
are  well defined elements of the space
$\om(V,V((z^{1/(n+1)})))$.
\end{rem}

By taking into account this remark, from now on, we shall call a pair 
$(a(z),b(z))$, where $a(z) \in \om(V,V((z)))$, $b(z)\in
\om(V,V((z^{1/(n+1)})))$, quasi compatible if there exists a nonzero polynomial $p(z_1,z_2)$ such that 
$$p(z_1,z_2)a(z_1)b(z_2)\in \om (V,V((z_1,z_{2}^{1/(n+1)}))).$$

\begin{rem}For an arbitrary polynomial $p(z_1,z_2)\neq 0$ there exist
only finitely many $t\in\frac{1}{2}\mathbb{Z}$ such that $p(zq^t ,z)=0$.
Hence, for any quasi compatible pair $(a(z),b(z))$ in $\om (V,V((z^{1/(n+1)})))$ there are only
finitely many $t$ such that $a(zq^{t})_{0}b(z)\neq 0$. 
\end{rem}

Set
$$V=K(1)\otimes \mathbb{C}\left\{P\right\}.$$
For $a(z)\in\om (V,V((z^{1/(n+1)})))$ we shall write
$$\wt a(z) = \alpha\in P$$
if $a(z)1=b(z)\otimes e^{\alpha}$ for some $b(z)\in K(1)((z^{1/(n+1)}))$. For example,
\begin{align*}
\wt x_{i}^{\pm}(z) =\pm\alpha_{i},\quad \wt \psi_{i}=\wt\phi_{i}=0,\quad\wt\mathcal{Y}_{i}(z)=\lambda_i.
\end{align*}

\begin{defn}\label{dot_def}
Let $(a(z),b(z))$ be a quasi compatible pair. Define
\begin{equation}\label{dot}
a(z)_\bullet b(z)=\sum_{t\in\frac{1}{2}\mathbb{Z}}\left(\frac{1}{zq^{t}}\right)^{(\wt a(z),\wt b(z))+1}q^t a(zq^t)_{0}b(z).
\end{equation}
\end{defn}
Denote by $\left<\mathcal{Y}_i (z)\right>$ a vector space over
$\mathbb{C}(q^{1/2})$ spanned by all the vectors obtained by the above defined
action of the operators $x_{j}^{\pm}(z)$, $\psi_{j}(z)$, $\phi_{j}(z)$ on $\mathcal{Y}_{i}(z)$:
\begin{align*}
&\left<\mathcal{Y}_i
(z)\right> = \spn_{\mathbb{C}(q^{1/2})}
\left\{
a_{1}(z)_\bullet \ldots_\bullet
a_{k}(z)_\bullet \mathcal{Y}_i (z)\, :\, k\geq 0,\, a_{s}=x_{j}^{\pm}(z),
\psi_{j}(z),\phi_{j}(z),\right. \\
&\hspace{213pt} \left. s=1,2,\ldots,k,\, j=1,2,\ldots,n \right\}.
\end{align*}

\subsection{Space \texorpdfstring{$\left<\mathcal{Y}_i (z)\right>$}{<Yi(z)>}}
First, we shall consider an action of the operators $x_{j}^{-}(z)$ on
$\mathcal{Y}_i (z)$.
Denote by $L(\lambda_i)$ the irreducible highest weight $U_q(\mathfrak{sl}_{n+1})$-module
with the highest weight $\lambda_i$ and with the highest weight vector $v_{\lambda_i}$.

\begin{rem}
On every $L(\lambda_i)$ we have 
$f_{j_k}\ldots f_{j_2}f_{j_1}v_{\lambda_{i}}\neq 0$ if and only if $f_{j_{k-1}}\ldots f_{j_2}f_{j_1}v_{\lambda_{i}}\neq 0$ 
and 
\begin{equation}\label{-1:cond}
(\lambda_i-\alpha_{j_1}-\ldots-\alpha_{j_{k-1}},-\alpha_{j_k})=-1.
\end{equation}
Similarly, 
$e_{j_k}f_{j_{k-1}}\ldots f_{j_2}f_{j_1}v_{\lambda_{i}}\neq 0$ if and only if $f_{j_{k-1}}\ldots f_{j_2}f_{j_1}v_{\lambda_{i}}\neq 0$ 
and 
\begin{equation}\label{-1:cond+}
(\lambda_i-\alpha_{j_1}-\ldots-\alpha_{j_{k-1}},\alpha_{j_k})=-1.
\end{equation}
\end{rem}

\begin{lem}\label{x-Y}
Let $k\in\mathbb{N}$. Then 
$$x_{j_k}^{-}(z)_\bullet \ldots_\bullet x_{j_2}^{-}(z)_\bullet x_{j_1}^{-}(z)_\bullet \mathcal{Y}_i (z)\neq 0\quad\text{if and only if}\quad f_{j_k}\ldots f_{j_2}f_{j_1}v_{\lambda_{i}}\neq 0.$$
\end{lem}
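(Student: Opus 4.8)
The plan is to compute the product $x_{j_k}^{-}(z)_\bullet \ldots_\bullet x_{j_1}^{-}(z)_\bullet \mathcal{Y}_i(z)$ explicitly and show that its non-vanishing is governed by exactly the same condition \eqref{-1:cond} that controls $f_{j_k}\ldots f_{j_1}v_{\lambda_i}$. First I would unwind Definition \ref{dot_def} for a single factor $x_{j_1}^{-}(z)_\bullet \mathcal{Y}_i(z)$. The relevant normal-ordering relation is \eqref{r4} when $j_1 = i$, namely $(z_1 - q z_2)x_i^{-}(z_1)\mathcal{Y}_i(z_2) = \mathop{:}x_i^{-}(z_1)\mathcal{Y}_i(z_2)\mathop{:}$, and \eqref{r5} when $j_1 \neq i$, where the product is already normal-ordered. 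I would take $p(z_1,z_2) = z_1 - q z_2$ (or $p=1$ in the disjoint case) as the witnessing polynomial and feed it through Definition \ref{n:product} to get the zeroth product $x_{j_1}^{-}(zq^t)_0 \mathcal{Y}_i(z)$. The key arithmetic point is that the weight factor $(zq^t)^{-(\wt x_{j_1}^{-},\wt\mathcal{Y}_i)-1} = (zq^t)^{(\alpha_{j_1},\lambda_i)-1}$ together with the summation over $t\in\frac12\mathbb{Z}$ collapses, via the remark that $p(zq^t,z)=0$ for only finitely many $t$, to a single surviving term whose coefficient is a nonzero power of $q$ precisely when $(\lambda_i,-\alpha_{j_1})=-1$, i.e. exactly the $k=1$ instance of \eqref{-1:cond}.

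Next I would set up the induction on $k$. Writing $a(z) = x_{j_{k-1}}^{-}(z)_\bullet\ldots_\bullet x_{j_1}^{-}(z)_\bullet \mathcal{Y}_i(z)$, the inductive hypothesis gives $a(z)\neq 0$ iff $f_{j_{k-1}}\ldots f_{j_1}v_{\lambda_i}\neq 0$, and moreover I would carry along the stronger bookkeeping fact that $\wt a(z) = \lambda_i - \alpha_{j_1} - \ldots - \alpha_{j_{k-1}}$, which follows from $\wt x_j^{-}(z) = -\alpha_j$ and the additivity of weights under the $_\bullet$ product. The crucial reduction is that the quasi-compatibility witness for the pair $(x_{j_k}^{-}(z), a(z))$ can be taken to be a product of linear factors $(z_1 - q^{c} z_2)$ read off from the two-point relations \eqref{r1}, \eqref{r2}, \eqref{r3} (governing how $x_{j_k}^{-}$ contracts against the $x^{-}$ factors already present in $a(z)$); the exponent of the leading contraction singularity determines the order of the pole, and hence which coefficient the zeroth product extracts. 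Assembling the weight prefactor $(zq^t)^{(\alpha_{j_k},\,\lambda_i-\alpha_{j_1}-\ldots-\alpha_{j_{k-1}})-1}$ against this singularity structure, I expect the sum over $t$ again to reduce to a single term with nonzero $q$-coefficient exactly when $(\lambda_i - \alpha_{j_1}-\ldots-\alpha_{j_{k-1}},\,-\alpha_{j_k}) = -1$, matching \eqref{-1:cond}.

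Finally I would close the equivalence by combining the two halves: by \eqref{-1:cond} the quantum side vanishes iff $(\wt a, -\alpha_{j_k})\neq -1$ or $a(z)$ already vanishes, and by the Remark preceding the Lemma the classical side $f_{j_k}\ldots f_{j_1}v_{\lambda_i}$ vanishes under precisely the same two conditions; the inductive hypothesis matches the ``$a(z)\neq 0 \Leftrightarrow f_{j_{k-1}}\ldots f_{j_1}v_{\lambda_i}\neq 0$'' clauses, and the weight computation matches the pairing condition. I expect the main obstacle to be the bookkeeping in the inductive step: one must verify that after applying $_\bullet$ the resulting operator $x_{j_k}^{-}(z)_\bullet a(z)$ retains the structural form needed to rerun the argument (a sum of products $\mathop{:}x^{-}\cdots x^{-}\mathcal{Y}_i\mathop{:}$ times a scalar), and that the contraction exponent governing the pole really is the linear expression $(\wt a,-\alpha_{j_k})$ rather than something contaminated by interactions among the already-contracted factors. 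Controlling that exponent, via careful use of \eqref{r1}–\eqref{r3} and the fact that the weight grading is additive, is the delicate part; once it is pinned down, the collapse of the $t$-sum and the comparison with \eqref{-1:cond} are routine.
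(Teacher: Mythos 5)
Your overall strategy (induction on $k$, base case from \eqref{r4}--\eqref{r5}, inductive step from \eqref{r1}--\eqref{r3}, comparison with \eqref{-1:cond}) follows the same general lines as the paper's proof, but there is a genuine gap at exactly the point you flag as ``routine once pinned down,'' and the mechanism you propose for pinning it down does not work. You claim that the sum over $t$ in \eqref{dot} collapses to a single surviving term whose non-vanishing is governed by the pairing $(\wt a(z),-\alpha_{j_k})=-1$. That pairing is a \emph{global} count: writing $\wt a(z)=\lambda_i-\alpha_{j_1}-\cdots-\alpha_{j_{k-1}}$, each earlier factor with $j_s=j_k$ contributes $+2$ (two zeros, at $z_1=q^{t_s}z$ and $z_1=q^{t_s+2}z$, by \eqref{r1}), each adjacent factor contributes $-1$ (one pole at $z_1=q^{t_s+1}z$, by \eqref{r2}), and $\mathcal{Y}_i$ contributes $-\delta_{i,j_k}$ (a pole at $qz$, by \eqref{r4}). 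But the zeroth product $x_{j_k}^{-}(zq^{t})_0\,a(z)$ only sees the \emph{local} order of pole at the single point $z_1=q^{t}z$. A global excess of poles over zeros by one does not by itself guarantee that some specific point carries a net pole (the poles could sit at points already cancelled by zeros and be spread over several distinct points), nor does it rule out that two different values of $t$ give nonzero zeroth products, in which case \eqref{dot} is a sum of several terms and one would still have to exclude cancellation. So the equivalence cannot be read off from the weight pairing alone; the alignment of the zeros and poles at the same shifted points is precisely the content of the lemma, not a bookkeeping afterthought.

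The paper closes this gap by proving a strictly stronger statement by induction: $f_{j_k}\cdots f_{j_1}v_{\lambda_i}\neq 0$ if and only if there exists a \emph{unique} sequence $(t_k,\ldots,t_1)$ of shifts such that $x_{j_l}^{-}(q^{t_l}z)_0\, x_{j_{l-1}}^{-}(z)_\bullet\ldots_\bullet x_{j_1}^{-}(z)_\bullet\mathcal{Y}_i(z)\neq 0$ for all $l$. Carrying the whole shift sequence (not just the weight) through the induction is what makes the local analysis possible: setting $r=\max\left(\left\{l\le k : j_l=j_{k+1}\right\}\cup\left\{0\right\}\right)$, in the case $r=0$ condition \eqref{-1:cond} forces exactly one earlier index adjacent to $j_{k+1}$, whose pole at $q^{t_l+1}z$ is the unique survivor; in the case $r>0$ the two zeros at $q^{t_r}z$ and $q^{t_r+2}z$ coming from \eqref{r1} must be overcome, and \eqref{-1:cond} together with the induction hypothesis produce exactly two intermediate adjacent indices $l_1,l_2>r$ with $t_{l_1}=t_{l_2}=t_r+1$, giving a double pole at $q^{t_r+2}z$ that beats the single zero there. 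This case analysis on repetition of the index, together with the uniqueness of the shift sequence (which also guarantees the $t$-sum in \eqref{dot} has at most one nonzero summand, so no cancellation can occur), is the missing idea in your outline; without it, the ``collapse of the $t$-sum'' is an assertion equivalent to the lemma itself rather than a proof of it.
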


\begin{proof}
We shall prove by induction the following statement:
$$f_{j_k}\ldots f_{j_2}f_{j_1}v_{\lambda_{i}}\neq 0$$ 
if and only if there exists a unique sequence $(t_k,\ldots,t_1)$
in $\frac{1}{2}\mathbb{Z}$ such that 
$$x_{j_l}^{-}(q^{t_l}z)_0 x_{j_{l-1}}^{-}(z)_\bullet \ldots_\bullet x_{j_2}^{-}(z)_\bullet x_{j_1}^{-}(z)_\bullet \mathcal{Y}_i (z)\neq 0\quad\text{for all }l=1,2,\ldots,k.$$

Let $k=1$. Then $f_{j_1}v_{\lambda_i}\neq 0$ if and only if
$j_1=i$. For $j_1=i$ relation (\ref{r4}) implies that $x_{i}^{-}(qz)_0 \mathcal{Y}_i (z)\neq 0$ and $x_{i}^{-}(q^t z)_0 \mathcal{Y}_i (z)= 0$ for
$t\neq 1$, so $x_{i}^{-}(z)_\bullet \mathcal{Y}_i (z)\neq 0$. 
For $j_1\neq i$ relation (\ref{r5}) implies that
$x_{j_1}^{-}(q^t z)_0 \mathcal{Y}_i (z)= 0$ for all $t$. Therefore, the
statement holds for $k=1$ and we have a unique sequence $(t_1)=(1)$.

Let $k=2$. Then $f_{j_2}f_{i}v_{\lambda_i}\neq 0$ if and only if $|j_2-i|=1$. For $j_2=i$ relations (\ref{r1}) and (\ref{r4}) imply that 
$x_{i}^{-}(q^t z)_0 x_{i}^{-}( z)_\bullet  \mathcal{Y}_i (z)= 0$ for all $t$. For $|j_2 - i|>1$ relations (\ref{r3}) and (\ref{r5}) imply that
$x_{j_2}^{-}(q^t z)_0 x_{i}^{-}( z)_\bullet  \mathcal{Y}_i (z)= 0$ for all $t$. Hence $x_{j_2}^{-}(z)_\bullet  x_{i}^{-}( z)_\bullet  \mathcal{Y}_i (z)= 0$
 for $|j_2 -i|\neq 1$.
 For $|j_2 -i|= 1$ relations (\ref{r2}) and (\ref{r5}) imply that $x_{j_2}^{-}(q^2 z)_0 x_{i}^{-}( z)_\bullet  \mathcal{Y}_i (z)\neq 0$ and
 $x_{j_2}^{-}(q^t z)_0 x_{i}^{-}( z)_\bullet  \mathcal{Y}_i (z)= 0$ for $t\neq 2$. Therefore, the statement holds for $k=2$ and
 we have a unique sequence $(t_2,t_1)=(2,1)$.
 
 Suppose that our statement   holds for some $k\geq 2$. 
Assume that $f_{j_{k+1}}\ldots f_{j_1}v_{\lambda_{i}}\neq 0$. Let
$$r=\max\left(\left\{l\leq k : j_l =
j_{k+1}\right\}\cup\left\{0\right\}\right).$$

If $r=0$, then $j_{k+1}\not\in\left\{j_k,\ldots,j_1,i\right\}$. By condition (\ref{-1:cond}) we see that there
exists exactly one $l\in\left\{1,2,\ldots,k\right\}$ such that $|j_{k+1}-j_l|=1$. Relations (\ref{r2}), (\ref{r3}) and (\ref{r5}) imply that
\begin{align*}
&x_{j_{k+1}}^{-}(q^{t_l +1}z)_{0}x_{j_k}^{-}(z)_\bullet \ldots x_{j_2}^{-}(z)_\bullet x_{j_1}^{-}(z)_\bullet \mathcal{Y}_i (z)\neq 0,\\
&x_{j_{k+1}}^{-}(q^{t}z)_{0}x_{j_k}^{-}(z)_\bullet \ldots x_{j_2}^{-}(z)_\bullet x_{j_1}^{-}(z)_\bullet \mathcal{Y}_i (z)= 0 \quad \text{for }t\neq t_l +1.
\end{align*}
Therefore, $x_{j_{k+1}}^{-}(z)_\bullet x_{j_k}^{-}(z)_\bullet \ldots x_{j_2}^{-}(z)_\bullet x_{j_1}^{-}(z)_\bullet \mathcal{Y}_i (z)\neq 0$ and we have a unique sequence
$(t_l +1,t_k,t_{k-1},\ldots,t_1)$.

If $r>0$, then condition (\ref{-1:cond}) implies (together with the induction
assumption) that there exist exactly two integers $l_1$ and $l_2$, $k\geq l_2>l_1>r$, such that $t_{l_1}=t_{l_2}=t_{r}+1$, $|j_{k+1}-j_{l_1}|=|j_{k+1}-j_{l_2}|=1$ and $j_{l_1}\neq j_{l_2}$. By considering relations (\ref{r1})--(\ref{r5})
we see that
\begin{align*}
&x_{j_{k+1}}^{-}(q^{t_{l_1} +1}z)_{0}x_{j_k}^{-}(z)_\bullet \ldots x_{j_2}^{-}(z)_\bullet x_{j_1}^{-}(z)_\bullet \mathcal{Y}_i (z)\neq 0,\\
&x_{j_{k+1}}^{-}(q^{t}z)_{0}x_{j_k}^{-}(z)_\bullet \ldots x_{j_2}^{-}(z)_\bullet x_{j_1}^{-}(z)_\bullet \mathcal{Y}_i (z)= 0 \quad \text{for }t\neq t_{l_1} +1.
\end{align*}
Therefore, $x_{j_{k+1}}^{-}(z)_\bullet x_{j_k}^{-}(z)_\bullet \ldots x_{j_2}^{-}(z)_\bullet x_{j_1}^{-}(z)_\bullet \mathcal{Y}_i (z)\neq 0$ and we have a unique sequence
$(t_{l_1} +1,t_k,t_{k-1},\ldots,t_1)$.

Now assume that there exists a unique sequence $(t_{k+1},\ldots,t_1)$
in $\frac{1}{2}\mathbb{Z}$ such that 
$$x_{j_l}^{-}(q^{t_l}z)_0 x_{j_{l-1}}^{-}(z)_\bullet \ldots x_{j_2}^{-}(z)_\bullet x_{j_1}^{-}(z)_\bullet \mathcal{Y}_i (z)\neq 0\quad\text{for all }l=1,2,\ldots,k+1.$$
By induction assumption we have  $f_{j_k}\ldots f_{j_2}f_{j_1}v_{\lambda_{i}}\neq 0$.

If $r=0$, then by relations (\ref{r1})--(\ref{r5}) we conclude that if $i<j_{k+1}$, then $j_{k+1}+1\not\in\left\{j_k,\ldots,j_1,i\right\}$
and that if $i>j_{k+1}$, then $j_{k+1}-1\not\in\left\{j_k,\ldots,j_1,i\right\}$. 
Hence, exactly one of two indices $j_{k+1}\pm 1$ is an element of the set $\left\{j_k,\ldots,j_1,i\right\}$. 
Furthermore, there exists exactly one index $l=1,2,\ldots,k$ such that $j_{k+1}-1=j_l$ or $j_{k+1}+1=j_l$, which implies that condition (\ref{-1:cond}) holds, i.e.
\begin{equation*}
(\lambda_i-\alpha_{j_1}-\ldots-\alpha_{j_{k}},-\alpha_{j_{k+1}})=-1,
\end{equation*}
so $f_{j_{k+1}}\ldots f_{j_2}f_{j_1}v_{\lambda_{i}}\neq 0$.

If $r>0$, then by induction assumption we have
\begin{equation*}
(\lambda_i-\alpha_{j_1}-\ldots-\alpha_{j_{r-1}},-\alpha_{j_{k+1}})=-1.
\end{equation*}
Since
$$x_{j_{k+1}}^{-}(q^{t}z)_{0}x_{j_r}^{-}(z)_\bullet \ldots
x_{j_2}^{-}(z)_\bullet x_{j_1}^{-}(z)_\bullet \mathcal{Y}_i (z)= 0\quad\text{for all }t,$$ by
considering relations (\ref{r1})--(\ref{r5}) we conclude that there exist exactly two integers $l_1$ and $l_2$, $k\geq l_2>l_1>r$, such that $t_{l_1}=t_{l_2}=t_{r}+1$, $|j_{k+1}-j_{l_1}|=|j_{k+1}-j_{l_2}|=1$ and $j_{l_1}\neq j_{l_2}$.
Hence, we have  $(\lambda_i-\alpha_{j_1}-\ldots-\alpha_{j_{k}},-\alpha_{j_{k+1}})=-1$ so the statement of the Lemma follows.
\end{proof}

Notice that for $x_{j_k}^{-}(z)_\bullet \ldots_\bullet x_{j_1}^{-}(z)_\bullet \mathcal{Y}_i (z)\neq 0$ we have
$$\wt x_{j_k}^{-}(z)_\bullet \ldots_\bullet x_{j_1}^{-}(z)_\bullet \mathcal{Y}_i (z) = \lambda_1 -\alpha_{j_1}-\ldots-\alpha_{j_k}=\wt f_{j_k}\ldots f_{j_2}f_{j_1}v_{\lambda_{i}}.$$
Denote by $\mathcal{B}^{-}_{i}$ the set of all
$$x_{j_k}^{-}(z)_\bullet \ldots_\bullet x_{j_1}^{-}(z)_\bullet \mathcal{Y}_i (z)\neq 0,\quad\text{where }j_s=1,2,\ldots,n ,\text{ }s=1,2,\ldots,k ,\text{ }k\in\mathbb{Z}_{\geq 0},$$
and denote by $\left<\mathcal{Y}_i (z)\right>^{-}$ a subspace of $\left<\mathcal{Y}_i (z)\right>$ spanned by $\mathcal{B}^{-}_{i}$.
By Lemma \ref{x-Y} we have
$$\wt \mathcal{B}^{-}_{i} = \wt L(\lambda_i).$$

\begin{rem}
Considering  \eqref{relations} we see that if
$$f_{j_k}\ldots f_{j_{p+1}}f_{j_p}\ldots f_{j_1}v_{\lambda_{i}} =
f_{\sigma(j_k)}\ldots
f_{j_{\sigma(j_{p+1})}}f_{j_p}\ldots f_{j_1}v_{\lambda_{i}}$$ for some
permutation $\sigma$, then
\begin{align*}
&x_{j_k}^{-}(z)_\bullet \ldots_\bullet 
x_{j_{p+1}}^{-}(z)_\bullet x_{j_p}^{-}(z)_\bullet \ldots_\bullet
x_{j_1}^{-}(z)_\bullet \mathcal{Y}_i (z)\\
&\qquad =
x_{\sigma(j_k)}^{-}(z)_\bullet \ldots_\bullet x_{\sigma(j_{p+1})}^{-}(z)_\bullet
x_{j_p}^{-}(z)_\bullet \ldots_\bullet x_{j_1}^{-}(z)_\bullet \mathcal{Y}_i (z).
\end{align*}
 Therefore, we have $$\card \mathcal{B}^{-}_{i} = \dim L(\lambda_i).$$
\end{rem}

\begin{lem}\label{linear_ind}
The set $\mathcal{B}^{-}_{i}$ forms a basis of $\left<\mathcal{Y}_i (z)\right>^{-}$.
\end{lem}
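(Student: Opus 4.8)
The plan is to prove linear independence, since $\mathcal{B}^{-}_{i}$ already spans $\left<\mathcal{Y}_i (z)\right>^{-}$ by construction. The whole argument will rest on the $P$-weight grading carried by $V=K(1)\otimes\mathbb{C}\left\{P\right\}$, under which $\mathbb{C}\left\{P\right\}=\bigoplus_{\beta\in P}\mathbb{C}\left\{Q\right\}e^{\beta}$ places $e^{\beta}$ in degree $\beta$. First I would check that every relevant operator is homogeneous for this grading: from the Frenkel-Jing formula $x_{j}^{-}(z)$ shifts degree by $-\alpha_j$, from Definition \ref{Y} the operator $\mathcal{Y}_i(z)$ shifts degree by $\lambda_i$, and since the $r$th product in Definition \ref{n:product} is extracted from the plain operator product $a(z_1)b(z)$ (whose degree shift is the sum of the two) by multiplication with scalar rational expressions and a substitution, the $\bullet$-product of homogeneous operators is again homogeneous with degree equal to the sum. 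Hence each element $x_{j_k}^{-}(z)_\bullet\ldots_\bullet x_{j_1}^{-}(z)_\bullet\mathcal{Y}_i(z)$ of $\mathcal{B}^{-}_{i}$ is homogeneous of weight $\lambda_i-\alpha_{j_1}-\ldots-\alpha_{j_k}$, coinciding with the $\wt$ recorded earlier.

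The key step is to observe that distinct elements of $\mathcal{B}^{-}_{i}$ carry distinct weights. Here I would invoke that $L(\lambda_i)$ is a minuscule $\mathfrak{sl}_{n+1}$-module, so all of its weight spaces are one-dimensional and the number of its weights equals $\dim L(\lambda_i)$. Combining this with the two facts already recorded, namely $\card\mathcal{B}^{-}_{i}=\dim L(\lambda_i)$ and $\wt\mathcal{B}^{-}_{i}=\wt L(\lambda_i)$, a pigeonhole count forces every weight of $L(\lambda_i)$ to be attained by exactly one element of $\mathcal{B}^{-}_{i}$; thus the weight map is injective on $\mathcal{B}^{-}_{i}$.

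Finally I would deduce independence from homogeneity. Suppose $\sum_{w\in\mathcal{B}^{-}_{i}}c_w w=0$ and some $c_{w_0}\neq 0$. As $w_0\neq 0$ and $w_0$ is homogeneous, there is a homogeneous $v\in V$ with $w_0 v\neq 0$; then $w v$ lies in the graded component of degree $\wt v+\wt w$, and projecting $\sum_w c_w(wv)=0$ onto degree $\wt v+\wt w_0$ kills every term except $w_0$ (weights being pairwise distinct), giving $c_{w_0}\,w_0 v=0$, a contradiction. Hence all $c_w=0$, and together with spanning this shows $\mathcal{B}^{-}_{i}$ is a basis.

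The main obstacle I anticipate is the distinct-weights step, which is the only place where genuine input beyond bookkeeping enters: it hinges on the minuscule structure of $L(\lambda_i)$ (equivalently, on $\card\mathcal{B}^{-}_{i}$ equalling both $\dim L(\lambda_i)$ and the number of its weights). Everything else, the homogeneity of the $\bullet$-products and the passage from distinct weights to linear independence, is formal once the grading is in place.
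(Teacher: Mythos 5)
Your proof is correct, but it runs along a genuinely different route from the paper's. The paper argues by minimal counterexample inside the module structure it has just built: given a shortest nontrivial relation $\sum_s \nu_s a_s(z)=0$, it picks a term $a_p(z)$ of maximal weight and applies a string of operators $x_{j_l}^{-}(z)_\bullet\cdots x_{j_1}^{-}(z)_\bullet$ driving $a_p(z)$ down to the lowest weight of $L(\lambda_i)$; by Lemma \ref{x-Y} this image is nonzero, while the images of the other terms land at weights outside $\wt L(\lambda_i)$ and hence vanish, producing a shorter nontrivial relation --- a contradiction. You instead never apply any further operators: you make explicit the $P$-grading of $V=K(1)\otimes\mathbb{C}\{P\}$, check that Li's $r$th products and the ``$_\bullet$'' products of homogeneous operators remain homogeneous (routine but necessary bookkeeping the paper never spells out), prove via the minuscule property of $L(\lambda_i)$ and the recorded equalities $\card\mathcal{B}^{-}_{i}=\dim L(\lambda_i)$, $\wt\mathcal{B}^{-}_{i}=\wt L(\lambda_i)$ that the weight map is injective on $\mathcal{B}^{-}_{i}$, and then conclude by projecting onto graded components. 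Both arguments ultimately rest on the same structural fact --- one-dimensionality of the weight spaces of $L(\lambda_i)$, which the paper uses only implicitly --- but yours isolates it cleanly and reduces independence to formal graded linear algebra, whereas the paper's lowering-operator trick stays inside the machinery of Lemma \ref{x-Y} and, notably, serves as the template reused later in the proof of Lemma \ref{independence_final}, where your purely weight-theoretic argument would not suffice as stated (the vectors there involve $\psi_j$ factors of weight zero, so distinct basis elements can share a weight).
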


\begin{proof}
Suppose that $$\sum_{s=1}^{k}\nu_s a_s(z)=0$$ for some nonzero scalars $\nu_s$ and $a_s(z)\in \mathcal{B}^{-}_{i}$.
Furthermore, assume that $k\geq 2$ is the smallest positive integer for which
such a nontrivial linear combination exists.
We can choose $p$ such that $\wt a_p (z)$ is maximal, i.e. $\wt a_p (z)\not <
\wt a_r (z)$ for all $r\in\left\{1,2,\ldots,k\right\}$.
Next, we can choose $j_l,\ldots,j_1$ such that
$\wt x_{j_l}^{-}(z)_\bullet \ldots_\bullet x_{j_1}^{-}(z)_\bullet a_p (z)$
is the lowest weight of $L(\lambda_i)$ and, therefore,
$$x_{j_l}^{-}(z)_\bullet \ldots_\bullet x_{j_1}^{-}(z)_\bullet a_p (z)\neq  0.$$ Then the linear
combination $$x_{j_l}^{-}(z)_\bullet \ldots_\bullet x_{j_1}^{-}(z)_\bullet \sum_{s=1}^{k}\nu_s a_s(z)=\sum_{s=1}^{k}\nu_s x_{j_l}^{-}(z)_\bullet \ldots_\bullet x_{j_1}^{-}(z)_\bullet a_s(z)=0$$
consists of less then $k$ nonzero summands. Contradiction.
\end{proof}

Next, we have an analogue of Lemma \ref{x-Y} for the operators $x_{j}^{+}(z)$:
\begin{lem}\label{x+Y}
Let $k\in\mathbb{N}$. Then 
$$x_{j_{k+1}}^{+}(z)_\bullet x_{j_k}^{-}(z)_\bullet \ldots_\bullet x_{j_2}^{-}(z)_\bullet x_{j_1}^{-}(z)_\bullet \mathcal{Y}_i (z)\neq 0\quad\text{if and only if}\quad e_{j_{k+1}}f_{j_k}\ldots f_{j_2}f_{j_1}v_{\lambda_{i}}\neq 0.$$
\end{lem}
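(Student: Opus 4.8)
The plan is to mirror the proof of Lemma \ref{x-Y}, treating the single operator $x_{j_{k+1}}^{+}(z)$ applied at the top of the already understood string of $x^{-}$-operators. First I would dispose of the trivial case: by Lemma \ref{x-Y}, if $f_{j_k}\cdots f_{j_1}v_{\lambda_i}=0$ then $A(z):=x_{j_k}^{-}(z)_\bullet\cdots_\bullet x_{j_1}^{-}(z)_\bullet\mathcal{Y}_i(z)=0$, whence both sides of the asserted equivalence vanish. So I may assume $f_{j_k}\cdots f_{j_1}v_{\lambda_i}\neq 0$, in which case $A(z)\neq 0$ and, by the proof of Lemma \ref{x-Y}, $A(z)$ equals a nonzero monomial in $z$ times the shifted product $x_{j_k}^{-}(q^{t_k}z)_0\cdots x_{j_1}^{-}(q^{t_1}z)_0\mathcal{Y}_i(z)$ for the unique sequence $(t_k,\ldots,t_1)$ constructed there.

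Next I would normal-order $x_{j_{k+1}}^{+}(z_1)$ against $A(z)$. Relations \eqref{r11}, \eqref{r12}, \eqref{r13} govern its interaction with each factor $x_{j_s}^{-}(q^{t_s}z)$, and \eqref{r14}, \eqref{r15} its interaction with $\mathcal{Y}_i(z)$. Writing $\mu=\lambda_i-\alpha_{j_1}-\cdots-\alpha_{j_k}=\wt A(z)$, this produces
$$x_{j_{k+1}}^{+}(z_1)A(z)=R(z_1,z)\,{:}x_{j_{k+1}}^{+}(z_1)A(z){:},$$
where $R$ is a rational function whose denominator, coming from \eqref{r11}, contributes two simple poles at $z_1=q^{t_s\pm 1}z$ for each $s$ with $j_s=j_{k+1}$, and whose numerator, coming from \eqref{r12} and \eqref{r14}, contributes a simple zero at $z_1=q^{t_s}z$ for each $s$ with $|j_s-j_{k+1}|=1$ and one at $z_1=z$ when $j_{k+1}=i$. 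A degree count then gives $\deg_{z_1}R=(\mu,\alpha_{j_{k+1}})=(\wt x_{j_{k+1}}^{+}(z),\wt A(z))$; this is precisely why the normalization $(1/(zq^{t}))^{(\wt a(z),\wt b(z))+1}$ in \eqref{dot} is the right one, ensuring that a surviving residue contributes a nonzero, $z$-homogeneous operator to the weighted sum defining $x_{j_{k+1}}^{+}(z)_\bullet A(z)$.

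The heart of the argument is to show that $R$ retains an uncancelled pole at some $z_1=q^{t}z$ exactly when $(\mu,\alpha_{j_{k+1}})=-1$, i.e. when condition \eqref{-1:cond+} holds and $e_{j_{k+1}}f_{j_k}\cdots f_{j_1}v_{\lambda_i}\neq 0$. Since every weight $\nu$ of $L(\lambda_i)$ satisfies $(\nu,\alpha_j^{\vee})\in\{-1,0,1\}$, we have $(\mu,\alpha_{j_{k+1}})\in\{-1,0,1\}$, and the degree count shows the total pole order of $R$ exceeds its total zero order by exactly $-(\mu,\alpha_{j_{k+1}})$. For $(\mu,\alpha_{j_{k+1}})=-1$ this forces a surviving pole, which I would locate and whose residue I would check is nonzero, producing a single $t$ with $x_{j_{k+1}}^{+}(q^{t}z)_0A(z)\neq 0$; for $(\mu,\alpha_{j_{k+1}})\in\{0,1\}$ I must instead show that every pole is cancelled. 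This last point is the main obstacle, since the degree count alone does not rule out a surviving pole paired with a surviving zero. Here I would invoke the explicit spectral parameters $(t_s)$ from Lemma \ref{x-Y} to verify that each pole $q^{t_s\pm1}z$ of a same-index factor coincides with the zero $q^{t_{s'}}z$ of an adjacent-index factor (or the zero at $z$ from $\mathcal{Y}_i$), and to exclude higher-order poles arising when several factors share spectral parameters. As in Lemma \ref{x-Y}, the bookkeeping would split according to whether $j_{k+1}$ occurs among $j_1,\ldots,j_k$, with conditions \eqref{-1:cond} and \eqref{-1:cond+} translating the combinatorics of the weights of $L(\lambda_i)$ into exact statements about the relative positions of the $t_s$.
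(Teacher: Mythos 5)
Your setup is sound and runs parallel to the paper's own method: the disposal of the trivial case via Lemma \ref{x-Y}, the reduction of $A(z)$ to the iterated shifted zeroth products with the unique spectral parameters $(t_k,\ldots,t_1)$, the inventory of poles of your rational function $R(z_1,z)$ at $z_1=q^{t_s\pm1}z$ for same-index factors (from \eqref{r11}) and of zeros at $z_1=q^{t_s}z$ and $z_1=z$ (from \eqref{r12}, \eqref{r14}), and the degree count $\deg_{z_1}R=(\mu,\alpha_{j_{k+1}})$ are all correct. But the proposal stops exactly where the lemma's content begins. You yourself name the decisive step --- for $(\mu,\alpha_{j_{k+1}})\in\{0,1\}$ every pole must be shown to cancel against a coincident zero, and for $(\mu,\alpha_{j_{k+1}})=-1$ the surviving pole must be located and its residue shown nonzero --- and then only promise to carry it out (``I would invoke \ldots to verify \ldots'', ``whose residue I would check is nonzero''). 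Since, as you concede, the degree count cannot distinguish a cancelled pole from a surviving pole paired with a surviving zero, the entire burden of the lemma rests on this unexecuted bookkeeping; as written this is a plan, not a proof.

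The idea that makes the bookkeeping tractable, and which is absent from your outline, is the reduction to the \emph{last} occurrence $r=\max\left(\left\{l : j_l=j_{k+1}\right\}\cup\left\{0\right\}\right)$ of the index $j_{k+1}$, which is how the paper argues. If $r=0$ there are no poles at all, so all zeroth products vanish, while $e_{j_{k+1}}$ commutes past every $f_{j_l}$ and kills $v_{\lambda_i}$; both sides are zero. If $r\neq 0$, the dichotomy is governed solely by whether some $s$ with $r<s\leq k$ satisfies $|j_s-j_{k+1}|=1$: if such an $s$ exists, the inductive structure of Lemma \ref{x-Y} forces $t_s=t_r+1$, so the zero of factor $s$ sits exactly on the candidate pole $q^{t_r+1}z$ and kills it, while \eqref{-1:cond} and \eqref{-1:cond+} give $e_{j_{k+1}}f_{j_k}\cdots f_{j_1}v_{\lambda_i}=0$; if no such $s$ exists, the pole at $z_1=q^{t_r+1}z$ survives and yields the unique $t=t_r+1$ with nonzero zeroth product, matching $e_{j_{k+1}}f_{j_k}\cdots f_{j_1}v_{\lambda_i}\neq 0$. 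The poles contributed by occurrences of $j_{k+1}$ before position $r$, and the pole at $q^{t_r-1}z$, are exactly the ones cancelled by zeros of intervening adjacent-index factors or of $\mathcal{Y}_i$ --- this is the pole-by-pole verification you deferred, and organizing it around the single index $r$ and adjacency after $r$ is what turns it into a finite, checkable case analysis rather than the open-ended matching problem your sketch leaves behind.
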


\begin{proof}
Let 
\begin{equation*}
r=\max\left(\left\{l : j_l =j_{k+1}\right\}\cup\left\{0\right\}\right).
\end{equation*}
If $r=0$, then 
$e_{j_{k+1}}f_{j_k}\ldots f_{j_1}v_{\lambda_{i}}=0$
 and, by relations (\ref{r12})--(\ref{r15}),
\begin{equation*}
x_{j_{k+1}}^{+}(z)_\bullet x_{j_k}^{-}(z)_\bullet \ldots_\bullet x_{j_2}^{-}(z)_\bullet x_{j_1}^{-}(z)_\bullet \mathcal{Y}_i (z)=0.
\end{equation*}

If $r\neq 0$, then there exists a  unique integer $t_r$ such that 
\begin{align*}
&x_{j_{r}}^{-}(q^{t_r}z)_0 x_{j_{r-1}}^{-}(z)_\bullet \ldots_\bullet x_{j_2}^{-}(z)_\bullet x_{j_1}^{-}(z)_\bullet \mathcal{Y}_i (z)\neq 0,\\
&x_{j_{r}}^{-}(q^{t}z)_0 x_{j_{r-1}}^{-}(z)_\bullet \ldots_\bullet x_{j_2}^{-}(z)_\bullet x_{j_1}^{-}(z)_\bullet \mathcal{Y}_i (z)= 0,\quad\text{for }t\neq t_r
\end{align*}
(cf. the proof of Lemma \ref{x-Y}). If there exists an integer $s$ such that $r<s\leq k$ and $|j_s - j_{r}|=1$, then by (\ref{-1:cond}) and (\ref{-1:cond+}) we get
$e_{j_{k+1}}f_{j_k}\ldots f_{j_1}v_{\lambda_{i}}=0$ and by relations (\ref{r11})--(\ref{r15})
\begin{equation*}
x_{j_{k+1}}^{+}(z)_\bullet x_{j_k}^{-}(z)_\bullet \ldots_\bullet x_{j_2}^{-}(z)_\bullet x_{j_1}^{-}(z)_\bullet \mathcal{Y}_i (z)=0.
\end{equation*}
If $|j_s - j_{r}|>1$ for all integers $s$ such that $r<s\leq k$, then by (\ref{-1:cond}) and (\ref{-1:cond+}) we get
$e_{j_{k+1}}f_{j_k}\ldots f_{j_1}v_{\lambda_{i}}\neq 0$ and relations (\ref{r11})--(\ref{r15}) imply that
\begin{align*}
&x_{j_{k+1}}^{+}(q^{t_r +1}z)_0 x_{j_{k}}^{-}(z)_\bullet \ldots_\bullet x_{j_2}^{-}(z)_\bullet x_{j_1}^{-}(z)_\bullet \mathcal{Y}_i (z)\neq 0,\\
&x_{j_{k+1}}^{+}(q^{t}z)_0 x_{j_{k}}^{-}(z)_\bullet \ldots_\bullet x_{j_2}^{-}(z)_\bullet x_{j_1}^{-}(z)_\bullet \mathcal{Y}_i (z)= 0,\quad\text{for }t\neq t_r +1,
\end{align*}
so the statement of the lemma follows.
\end{proof}

By considering relations (\ref{r6})--(\ref{r10}) and applying the same technique
as in the proof of Lemmas \ref{x-Y} and \ref{x+Y}  one can prove

\begin{lem}\label{nedokazana}
Let $k\in\mathbb{N}$. Then 
\begin{gather*}
\psi_{j_{k+1}}(z)_\bullet x_{j_k}^{-}(z)_\bullet \ldots_\bullet x_{j_2}^{-}(z)_\bullet x_{j_1}^{-}(z)_\bullet \mathcal{Y}_i (z)\neq 0\\
\text{if and only if}\\
e_{j_{k+1}}f_{j_k}\ldots f_{j_2}f_{j_1}v_{\lambda_{i}}\neq 0\quad\text{or}\quad f_{j_{k+1}}f_{j_k}\ldots f_{j_2}f_{j_1}v_{\lambda_{i}}\neq 0.
\end{gather*}
\end{lem}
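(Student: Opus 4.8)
The plan is to reduce the claim, exactly as in the proofs of Lemmas \ref{x-Y} and \ref{x+Y}, to locating the poles of the operator product $\psi_{j_{k+1}}(z_1)\,a(z)$, where I abbreviate $a(z)=x_{j_k}^{-}(z)_\bullet\cdots_\bullet x_{j_1}^{-}(z)_\bullet\mathcal{Y}_i(z)$. Since $\wt\psi_{j_{k+1}}(z)=0$, Definition \ref{dot_def} gives $\psi_{j_{k+1}}(z)_\bullet a(z)=\sum_{t}z^{-1}\psi_{j_{k+1}}(q^{t}z)_0\,a(z)$, so it suffices to decide for which $t\in\frac{1}{2}\mathbb{Z}$ the zeroth product $\psi_{j_{k+1}}(q^{t}z)_0\,a(z)$ is nonzero. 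If $f_{j_k}\cdots f_{j_1}v_{\lambda_i}=0$, then $a(z)=0$ by Lemma \ref{x-Y} and all three expressions in the statement vanish; hence I may assume $f_{j_k}\cdots f_{j_1}v_{\lambda_i}\neq 0$ and use the unique shift sequence $(t_k,\ldots,t_1)$ produced in the proof of Lemma \ref{x-Y}, writing $a(z)$ as a scalar multiple of $x_{j_k}^{-}(q^{t_k}z)_0\cdots x_{j_1}^{-}(q^{t_1}z)_0\mathcal{Y}_i(z)$.

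Next I would read off, from relations \eqref{r6}--\eqref{r10}, the pole structure of $\psi_{j_{k+1}}(z_1)\,a(z)$ regarded as an $(\ndo V)$-valued rational function of $z_1$. Passing $\psi_{j_{k+1}}(z_1)$ across a factor $x_{j_s}^{-}(q^{t_s}z)$ produces, by \eqref{r6} when $j_s=j_{k+1}$ and by \eqref{r7} when $|j_s-j_{k+1}|=1$, a rational prefactor with a single pole (at $z_1=q^{t_s-3/2}z$ and at $z_1=q^{t_s+3/2}z$, respectively) and a single zero, while \eqref{r8} lets it commute freely when $|j_s-j_{k+1}|>1$; the final passage across $\mathcal{Y}_i(z)$ contributes, by \eqref{r9}, a pole at $z_1=q^{3/2}z$ when $i=j_{k+1}$ and nothing when $i\neq j_{k+1}$ by \eqref{r10}. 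This writes $\psi_{j_{k+1}}(z_1)\,a(z)=R(z_1)\cdot b(z_1,z)$, with $b$ a normal-ordered, pole-free and nonvanishing part and $R$ a scalar rational function whose value at $z_1=\infty$ is the nonzero constant by which the Cartan part $K_{j_{k+1}}$ of $\psi_{j_{k+1}}$ acts on the weight $\wt a(z)$, namely $q^{(\wt a(z),\,\alpha_{j_{k+1}})}$; consequently $\psi_{j_{k+1}}(q^{t}z)_0\,a(z)\neq 0$ precisely at those $t$ for which $R$ has an uncancelled pole at $z_1=q^{t}z$.

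The combinatorial heart of the argument---and the step I expect to be the main obstacle---is to decide which poles of $R$ survive. Because $\psi_{j_{k+1}}$ couples both to the equal index $j_{k+1}$ and to the two adjacent indices, the numerator zeros can cancel poles, and one must track the coincidences among the shifts $t_s$ recorded in the proof of Lemma \ref{x-Y} (the analogue of its $r=0$ versus $r>0$ dichotomy and of its ``two indices $l_1,l_2$'' analysis) to show that all poles cancel, making $R$ constant, when $(\wt a(z),\alpha_{j_{k+1}})=0$, whereas a single pole survives, with a nonzero residue on $b$, when $(\wt a(z),\alpha_{j_{k+1}})=\pm 1$. Since $L(\lambda_i)$ is minuscule, one has $(\mu,\alpha_j)\in\{-1,0,1\}$ for every weight $\mu$ and simple root $\alpha_j$, so $(\wt a(z),\alpha_{j_{k+1}})=(\lambda_i-\alpha_{j_1}-\cdots-\alpha_{j_k},\alpha_{j_{k+1}})$ takes no other values; thus $\psi_{j_{k+1}}(z)_\bullet a(z)\neq 0$ if and only if this pairing is $-1$ or $+1$.

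Finally I would translate the weight condition into the stated representation-theoretic one: by the Remark preceding Lemma \ref{x-Y}, the equalities $(\wt a(z),\alpha_{j_{k+1}})=-1$ and $(\wt a(z),\alpha_{j_{k+1}})=+1$ are, via \eqref{-1:cond+} and \eqref{-1:cond} respectively, equivalent (under the standing assumption $f_{j_k}\cdots f_{j_1}v_{\lambda_i}\neq 0$) to $e_{j_{k+1}}f_{j_k}\cdots f_{j_1}v_{\lambda_i}\neq 0$ and to $f_{j_{k+1}}f_{j_k}\cdots f_{j_1}v_{\lambda_i}\neq 0$. Hence a pole of $R$ survives---equivalently $\psi_{j_{k+1}}(z)_\bullet a(z)\neq 0$---exactly when one of these two vectors is nonzero, which is the assertion of the Lemma.
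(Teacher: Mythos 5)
Your proposal is correct and takes essentially the same route as the paper: the paper in fact records no detailed proof of this lemma, stating only that it follows from relations \eqref{r6}--\eqref{r10} by the technique of Lemmas \ref{x-Y} and \ref{x+Y}, which is precisely the pole-tracking argument you outline (equal index contributing a pole at $z_1=q^{t_s-3/2}z$, adjacent index at $z_1=q^{t_s+3/2}z$, the factor $\mathcal{Y}_i(z)$ at $z_1=q^{3/2}z$ when $i=j_{k+1}$, with numerator zeros cancelling all poles exactly when the pairing vanishes). Your additional organization via the minuscule property, i.e. $(\wt a(z),\alpha_{j_{k+1}})\in\{-1,0,1\}$, and the translation of the two nonzero cases through \eqref{-1:cond} and \eqref{-1:cond+} is sound and is, if anything, more detailed than what the paper itself provides.
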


As before, if $\psi_{j_{k+1}}(z)_\bullet x_{j_k}^{-}(z)_\bullet \ldots_\bullet x_{j_1}^{-}(z)_\bullet \mathcal{Y}_i (z)\neq 0$,
then  there exists a unique $t\in\frac{1}{2}\mathbb{Z}$ such that
$\psi_{j_{k+1}}(zq^t)_0 x_{j_k}^{-}(z)_\bullet \ldots_\bullet x_{j_1}^{-}(z)_\bullet \mathcal{Y}_i (z)\neq 0$. 

Since $\phi_{j}(z)$ contains only nonnegative powers of the variable $z$, we
have

\begin{lem}
For any $a(z)\in \left<\mathcal{Y}_i (z)\right>$ and $j=1,2,\ldots,n$  $$\phi_{j}(z)_{\bullet } a(z)=0.$$ 
\end{lem}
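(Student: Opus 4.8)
The plan is to show that the operator $\phi_j(z)$ annihilates every element of $\langle \mathcal{Y}_i(z)\rangle$ by exploiting the key structural fact, recorded just before the statement, that $\phi_j(z)$ contains \emph{only nonnegative} powers of $z$. Since $\langle\mathcal{Y}_i(z)\rangle$ is spanned by iterated $_\bullet$-products applied to $\mathcal{Y}_i(z)$, and each such spanning vector lies in $\om(V,V((z^{1/(n+1)})))$, it suffices to verify $\phi_j(z)_\bullet a(z)=0$ for an arbitrary such $a(z)$.

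First I would unwind the definition \eqref{dot} of the $_\bullet$-product, so that $\phi_j(z)_\bullet a(z)$ is a (finite) sum over $t\in\frac{1}{2}\mathbb{Z}$ of scalar multiples of the zeroth products $\phi_j(zq^t)_0\, a(z)$. By the remark preceding the statement, only finitely many terms can be nonzero, so the whole question reduces to showing that each individual zeroth product $\phi_j(zq^t)_0\, a(z)$ vanishes. Next I would recall the definition of the zeroth product via Definition \ref{n:product}: one chooses a nonzero polynomial $p(z_1,z_2)$ with $p(z_1,z_2)\phi_j(z_1)a(z_2)\in\om(V,V((z_1,z_2^{1/(n+1)})))$, and extracts the $z_0^{-1}$-coefficient of \eqref{ramones} after the substitution $z_1=z+z_0$. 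The crucial observation is that $\phi_j(z_1)=\sum_{r\ge 0}\phi_j(-r)z_1^{r}$ involves only nonnegative powers of $z_1$, so the product $\phi_j(z_1)a(z_2)$, and hence $p(z_1,z_2)\phi_j(z_1)a(z_2)$, is already polynomial in $z_1$ (equivalently, lies in $\om(V,V[z_1][[z_2^{1/(n+1)}]]\,)$ after clearing denominators). One may therefore take $p$ independent of the singular structure in $z_1$, and the normalization factor $\iota_{z,z_0}(p(z_0+z,z)^{-1})$ together with the substitution $z_1=z+z_0$ produces an expression that, when expanded, contains no negative powers of $z_0$.

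The main point — and the step I would write out most carefully — is that the coefficient of $z_0^{-1}$ in $Y_{\mathcal{E}}(\phi_j(z),z_0)a(z)$ is exactly $\phi_j(z)_0\,a(z)$, and this coefficient is zero precisely because the generating series has no negative powers of $z_0$ at all. Concretely, since $\phi_j(z_1)a(z_2)$ is a nonnegative (polynomial) series in $z_1$, substituting $z_1=z+z_0$ yields only nonnegative powers of $z_0$, and multiplying by $\iota_{z,z_0}(p(z_0+z,z)^{-1})$, which again expands in nonnegative powers of $z_0$, cannot create the term $z_0^{-1}$. Hence $\phi_j(zq^t)_0\,a(z)=0$ for every $t$, and summing gives $\phi_j(z)_\bullet a(z)=0$, as claimed. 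I expect the only mild obstacle to be bookkeeping: making precise that ``nonnegative powers in $z_1$'' survives the polynomial normalization and the $\iota_{z,z_0}$ expansion without generating a $z_0^{-1}$ term, which is a direct consequence of $\iota_{z,z_0}$ landing in $\mathbb{C}(q^{1/2})((z))((z_0))$ and the series being regular in $z_0$.
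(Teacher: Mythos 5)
Your proof is correct and takes essentially the same approach as the paper, which states this lemma as an immediate consequence of the single observation you elaborate: $\phi_{j}(z)$ contains only nonnegative powers of $z$, so (choosing the compatibility polynomial $p$ free of the factor $z_1-z_2$, e.g. depending on $z_2$ alone) the generating series $Y_{\mathcal{E}}(\phi_j(z),z_0)a(z)$ has no negative powers of $z_0$ and in particular no $z_0^{-1}$ term. The only nitpick is terminological: $\phi_j(z_1)a(z_2)$ is a formal power series in $z_1$ (infinitely many nonnegative powers), not a polynomial, but your argument uses only the absence of negative powers of $z_1$, so nothing is affected.
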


In order to clarify the application of Definition \ref{dot_def} on the
Frenel-Jing operators, we list in the following corollary  
necessary conditions for the summands on the right side of (\ref{dot}) to be
nonzero. Its statement is a consequence of the proofs of Lemmas \ref{x-Y},
\ref{x+Y} and \ref{nedokazana}.

\begin{cor}
Let $j=1,2,\ldots,n$ and
$$a(zq^t)_0 x_{j_k}^{-}(z q^{t_k})_0  \ldots x_{j_1}^{-}(zq^{t_1})_0 
\mathcal{Y}_{i}(z)\neq 0.$$ Set $t_0 =0$ and 
$$r_0 =\max\left(\left\{l : |j_l - j|=1\right\}\cup\left\{0\right\}\right).$$
Then
\begin{align*}
t=\left\{\begin{array}{l@{\,\ }l}
 t_{r_0}+1 &\textrm{ for }a(z)=x_{j}^{-}(z);\\
  t_{r_0}+\frac{3}{2} &\textrm{ for }a(z)=\psi_{j}(z);\\
  t_{r_0}+2 &\textrm{ for }a(z)=x_{j}^{+}(z).
\end{array}\right.\nonumber
\end{align*}
\end{cor}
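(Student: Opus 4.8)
The plan is to determine, for each of the three choices of $a(z)$, the location of the pole in the variable $z_1$ of the operator product
$$a(z_1)\,x_{j_k}^{-}(zq^{t_k})_0\cdots x_{j_1}^{-}(zq^{t_1})_0\,\mathcal{Y}_i(z),$$
because by Definition \ref{n:product} the zeroth product $a(zq^t)_0(\cdots)$ is non-zero exactly when this product has a pole at $z_1=zq^t$. All such poles are supplied by Proposition \ref{relations}, so the corollary amounts to reading off, in each case, the position of the pole that was shown to survive in the proofs of Lemmas \ref{x-Y}, \ref{x+Y} and \ref{nedokazana}. Throughout I regard $\mathcal{Y}_i(z)$ as the factor at position $0$, with $t_0=0$.

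First I would treat $a(z)=x_j^{-}(z)$. Among the relations \eqref{r1}--\eqref{r5}, only \eqref{r2} and \eqref{r4} produce a pole, each a simple pole at $z_1=q\,zq^{t_l}$ coming from a factor at position $l$ with $|j_l-j|=1$ (respectively from $\mathcal{Y}_i(z)$ when $j=i$), while the same-index relation \eqref{r1} contributes only zeros. The inductive argument of Lemma \ref{x-Y} shows that the surviving pole is anchored at the maximal such index $r_0$, so $t=t_{r_0}+1$. The operator $a(z)=\psi_j(z)$ is handled identically: by \eqref{r7} and \eqref{r9} an adjacent factor (or $\mathcal{Y}_i(z)$ when $j=i$) gives a simple pole at $z_1=q^{3/2}zq^{t_l}$, whereas the same-index relation \eqref{r6} would give a pole at $z_1=q^{-3/2}zq^{t_l}$; the proof of Lemma \ref{nedokazana} shows that only the former type survives, again at the maximal adjacent index $r_0$, whence $t=t_{r_0}+3/2$.

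The case $a(z)=x_j^{+}(z)$ will need one further step. Now \eqref{r12} and \eqref{r14} contribute only zeros, so the sole poles are the same-index ones from \eqref{r11}, located at $z_1=q^{\pm1}zq^{t_l}$. The proof of Lemma \ref{x+Y} shows that the product is non-zero only when the index $j$ occurs in $(j_1,\ldots,j_k)$, say for the last time at position $r$, with no Dynkin-neighbour of $j$ appearing after $r$; the surviving pole then sits at $z_1=q\,zq^{t_r}$, giving $t=t_r+1$. It remains to express $t_r$ through $r_0$: when $x_j^{-}$ was adjoined at position $r$ its shift was fixed, in the recursion of Lemma \ref{x-Y}, by the maximal adjacent index preceding it, and since Lemma \ref{x+Y} forbids any neighbour of $j$ after position $r$ this preceding index is precisely $r_0$. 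Hence $t_r=t_{r_0}+1$, and therefore $t=t_{r_0}+2$.

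The step demanding the most care is the assertion that, among the several a priori possible poles, exactly one survives and that it is anchored at the maximal index $r_0$ (together with the identity $t_r=t_{r_0}+1$ in the last case). This is not a formal consequence of Proposition \ref{relations} alone; it rests on the recursive bookkeeping carried out in the three lemmas, which is in turn governed by the highest-weight conditions \eqref{-1:cond} and \eqref{-1:cond+}. These force the multiplicities of $j$ and of its neighbours in the sequence $(j_1,\ldots,j_k)$ to be arranged so that the competing residues either vanish, as for the same-index pole \eqref{r6} in the $\psi_j$ case, or collapse onto a single pole at $r_0$, as for the two equal-shift adjacent factors arising in the recursion of Lemma \ref{x-Y}; granting this, the three stated values of $t$ follow immediately.
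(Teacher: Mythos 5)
Your proposal is correct and takes essentially the same route as the paper: the paper dispatches this corollary in a single sentence, declaring it ``a consequence of the proofs of Lemmas \ref{x-Y}, \ref{x+Y} and \ref{nedokazana}'', and that consequence is exactly the pole-reading argument you spell out, locating the simple poles via Proposition \ref{relations} and invoking the recursive bookkeeping of shifts from those three proofs. If anything, you supply more detail than the paper, in particular the identity $t_r = t_{r_0}+1$ needed to convert the shift $t_r+1$ coming from Lemma \ref{x+Y} into the stated value $t_{r_0}+2$.
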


The next lemma gives us an analogue of Drinfeld relation (\ref{D9}) on the space $\left<\mathcal{Y}_i (z)\right>$.

\begin{lem}\label{drinfeld}
For any $a(z)\in\left<\mathcal{Y}_i (z)\right>$ we have
\begin{equation}\label{def:rel}
x_{j_1}^{+}(z)_{\bullet}
x_{j_2}^{-}(z)_\bullet
a(z)
-x_{j_2}^{-}(z)_{\bullet}
x_{j_1}^{+}(z)_\bullet
a(z)
=\frac{\delta_{j_1\,j_2}}{q-q^{-1}}\left(\psi_{j_1}(z)-\phi_{j_1}(z)\right)_\bullet
a(z).
\end{equation}
\end{lem}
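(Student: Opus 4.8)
The plan is to reduce the identity \eqref{def:rel}, which is stated for an arbitrary element $a(z)\in\left<\mathcal{Y}_i(z)\right>$, to the case where $a(z)$ is a basis vector of the form
$$
a(z)=x_{l_r}^{-}(z)_\bullet\cdots_\bullet x_{l_1}^{-}(z)_\bullet\mathcal{Y}_i(z)
$$
(possibly decorated by $\psi$-factors). Because both sides of \eqref{def:rel} are $\mathbb{C}(q^{1/2})$-linear in $a(z)$, it suffices to verify the relation on the spanning set described just before Lemma~\ref{x-Y}, and in fact I expect it will be cleanest to unwind the ``$_\bullet$'' products back to their defining zeroth products $a(zq^t)_0 b(z)$ via Definition~\ref{dot_def}. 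The essential content is thus a local statement about the operators $x_{j_1}^{+}(z)$, $x_{j_2}^{-}(z)$ and their single nonzero zeroth products acting on such a string, and the global relation \eqref{def:rel} should fall out once the weights and the unique shifts $t$ (recorded in the Corollary above) are accounted for.

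First I would expand each ``$_\bullet$'' product using \eqref{dot}, so that the left-hand side of \eqref{def:rel} becomes a double sum over $t_1,t_2\in\tfrac12\mathbb{Z}$ of terms
$$
\Bigl(\tfrac{1}{zq^{t_1}}\Bigr)^{N_1+1}\Bigl(\tfrac{1}{zq^{t_2}}\Bigr)^{N_2+1}q^{t_1+t_2}\bigl(x_{j_1}^{+}(zq^{t_1})_0\,x_{j_2}^{-}(zq^{t_2})_0-x_{j_2}^{-}(zq^{t_1})_0\,x_{j_1}^{+}(zq^{t_2})_0\bigr)a(z),
$$
where $N_1,N_2$ are the relevant weight pairings. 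The Remarks preceding the Corollary guarantee that for each admissible history only one shift $t$ contributes a nonzero zeroth product, so each of these sums collapses to a single term, and the Corollary pins down exactly which shift that is for $x^{+}$ versus $x^{-}$. The key algebraic input is then the Drinfeld relation \eqref{D9} itself, $[x_i^{+}(k),x_j^{-}(l)]=\frac{\delta_{ij}}{q-q^{-1}}(\gamma^{(k-l)/2}\psi_i(k+l)-\gamma^{(l-k)/2}\phi_i(k+l))$, which I would translate into a commutator identity among the zeroth products $x_{j_1}^{+}(z_1)_0$ and $x_{j_2}^{-}(z_2)_0$ using the operator product relations \eqref{r6}--\eqref{r15} of Proposition~\ref{relations}. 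For $j_1\neq j_2$ the commutator vanishes by \eqref{D9}, matching the factor $\delta_{j_1 j_2}$ on the right; for $j_1=j_2$ it produces precisely the $\psi-\phi$ combination, which after reassembling the ``$_\bullet$'' sum on the right-hand side via Definition~\ref{dot_def} yields $\frac{1}{q-q^{-1}}(\psi_{j_1}(z)-\phi_{j_1}(z))_\bullet a(z)$.

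The main obstacle I anticipate is bookkeeping the powers of $z$ and $q$: one must check that the weight exponents $(\wt a(z),\wt b(z))+1$ appearing in \eqref{dot}, together with the half-integer shifts forced by \eqref{r6}--\eqref{r15}, combine so that the $\gamma^{\pm(k-l)/2}$ factors in \eqref{D9} (which on a level one module are powers of $q$) are exactly absorbed and the two sides carry the same overall $z$-degree. Concretely, the delicate point is that the shift producing a nonzero $x^{+}$-product (namely $t_{r_0}+2$, per the Corollary) and the shift producing a nonzero $x^{-}$-product ($t_{r_0}+1$) differ in a way that must be reconciled against the single shift ($t_{r_0}+\tfrac32$) at which $\psi$ acts nontrivially; verifying that these arithmetic offsets render both sides genuinely equal, rather than equal up to a scalar, is where the computation concentrates. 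Once this normalization is confirmed on basis vectors, linearity extends \eqref{def:rel} to all of $\left<\mathcal{Y}_i(z)\right>$ and the lemma follows.
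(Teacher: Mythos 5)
Your overall skeleton (reduce by linearity to basis vectors of the form $x_{p_k}^{-}(z)_\bullet\cdots_\bullet x_{p_1}^{-}(z)_\bullet\mathcal{Y}_i(z)$, collapse each ``$_\bullet$'' sum to its unique nonzero shift, and reconcile the offsets $t_{r_0}+1$, $t_{r_0}+\tfrac32$, $t_{r_0}+2$) matches the setting of the paper's proof, but your central mechanism has a genuine gap. You propose to take the Drinfeld relation \eqref{D9} --- a statement about commutators of \emph{compositions} of mode operators in $U_q(\widehat{\mathfrak{sl}}_{n+1})$ --- and ``translate'' it into a commutator identity among Li's zeroth products. No such translation is available, and you do not supply one: the left-hand side of \eqref{def:rel} is a difference of \emph{iterated} zeroth products $x_{j_1}^{+}(z)_\bullet\bigl(x_{j_2}^{-}(z)_\bullet a(z)\bigr) - x_{j_2}^{-}(z)_\bullet\bigl(x_{j_1}^{+}(z)_\bullet a(z)\bigr)$, where the inner product is itself a new vertex operator fed into the outer product, not a composition applied to $a(z)$. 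In the classical local theory the bridge between mode commutators and iterated products is Borcherds' commutator formula; in the present nonlocal setting the analogous formula is twisted by the quantum Yang--Baxter operator of the (weak) quantum vertex algebra, so \eqref{D9} cannot simply be ported over. This is precisely the step where your argument would stall.

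The paper avoids \eqref{D9} entirely and instead computes inside the Frenkel--Jing free-field realization. For $j_1=j_2=j$ it uses Proposition \ref{relations} to produce pole-clearing identities of the form
\begin{align*}
q^t (z_1 - z_2)\, r_{1}(z_1,z_2)\,x_{j}^{-}(z_1 q^t)\, a(z_2) &= r_{1}(z_1,z_2)\, {:}x_{j}^{-}(z_1 q^t) a(z_2){:}
\end{align*}
(and two analogous ones for $x_j^{+}$ and $\psi_j$), and then invokes the normal-ordering identity
\begin{equation*}
{:}x_{j}^{+}(q^{t+1} z)\,x_{j}^{-}(q^t z){:}\;=\;\psi_{j}(zq^{t+1/2}),
\end{equation*}
which is the actual source of the $\psi$-term on the right-hand side of \eqref{def:rel}; recall also that $\phi_j(z)_\bullet a(z)=0$ identically, so only $\psi$ needs to be produced. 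This bosonic identity, or some equivalent of it, is exactly what your proposal lacks: without it, collapsing the sums and tracking weights gives you the correct shifts but no mechanism generating $\psi_{j}(z)_\bullet a(z)$. Note finally that even the easy case $j_1\neq j_2$ is settled in the paper not by \eqref{D9} but by the combinatorial Lemmas \ref{x-Y} and \ref{x+Y} together with the weight conditions \eqref{-1:cond} and \eqref{-1:cond+}.
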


\begin{proof}
By using Lemmas \ref{x-Y} and \ref{x+Y} together with (\ref{-1:cond}) and
(\ref{-1:cond+}), we easily see that $[x_{j_1}^{+}(z),x_{j_2}^{-}(z)]=0$ on 
$\left<\mathcal{Y}_i (z)\right>$ for $j_1\neq j_2$.

Assume $j_1 = j_2$. Set $j=j_1=j_2$ and let $$a(z)=x_{p_{k}}^{-}(z)_\bullet \ldots_\bullet x_{p_1}^{-}(z)_\bullet \mathcal{Y}_i (z)$$
be an element of $\left<\mathcal{Y}_i (z)\right>$ such that $x_{j}^{-}(zq^t)_0 a(z)\neq 0$ for some (unique) $t$. 
By using relations given by Proposition \ref{relations} one can construct polynomials $r_s (z_1,z_2)$, $s=1,2,3$, such that
\begin{align}
& q^t (z_1 - z_2) r_{1}(z_1,z_2)x_{j}^{-}(z_1 q^t) a(z_2) = r_{1}(z_1,z_2) :x_{j}^{-}(z_1 q^t) a(z_2):\label{c:1}\\
& q^{t+1} (z_1 - z_2) r_{2}(z_1,z_2)x_{j}^{+}(z_1 q^{t+1}) x_{j}^{-}(z_2)_\bullet  a(z_2) = r_{2}(z_1,z_2) :x_{j}^{+}(z_1 q^{t+1}) x_{j}^{-}(z_2)_\bullet  a(z_2):\label{c:2}\\
& (z_1 - z_2) r_{3}(z_1,z_2)\psi_{j}(z_1 q^{t+1/2}) a(z_2) = (qz_1 -q^{-1}z_2)r_{3}(z_1,z_2) a(z_2)\psi_{j}(z_1 q^{t+1/2}). \label{c:3}
\end{align} 
Next, we have
\begin{equation}\label{c:4}
:x_{j}^{+}(q^{t+1} z)x_{j}^{-}(q^t z):=\psi_{j}(zq^{t+1/2}),
\end{equation}
so relations (\ref{c:1})--(\ref{c:3}), together with (\ref{c:4}), imply that
equation (\ref{def:rel}) holds.

For $a(z)=x_{p_{k}}^{-}(z)_\bullet \ldots_\bullet x_{p_1}^{-}(z)_\bullet \mathcal{Y}_i (z)$  such  that $x_{j}^{+}(z)_\bullet  a(z)\neq 0$ 
we can proceed similarly and for
$a(z)=x_{p_{k}}^{-}(z)_\bullet \ldots_\bullet x_{p_1}^{-}(z)_\bullet \mathcal{Y}_i (z)$  such  that $x_{j}^{\pm}(z)_\bullet  a(z)= 0$ the statement follows from Lemma \ref{nedokazana}.
\end{proof}

Since $\wt \psi_j (z)=0$ we have $$\wt\left<\mathcal{Y}_i (z)\right>  =\wt \mathcal{B}_{i}^{-}=\wt L(\lambda_i). $$
Every homogeneous vector $a(z)\in \left<\mathcal{Y}_i (z)\right>$ is a linear
combination of the vectors having the form
\begin{equation*}
b(z)\psi_{j_1}(zq^{t_{j_1}})\ldots \psi_{j_k}(zq^{t_{j_k}})
\end{equation*}
for $$b(z)\in \mathcal{B}_{i}^{-}, \,\,\,k\geq 0,\,\,\, 1\leq j_1\leq \ldots\leq j_k\leq n,\,\,\, t_{j_s}\in\textstyle\frac{1}{2}\mathbb{Z},\,\,\, \wt a(z)=\wt b(z).$$
Denote by $\mathcal{B}_{i,\psi}^{-}$ the set of all vectors 
\begin{equation}\label{the_vectors}
b(z)=b^{-}(z)\psi_{j_1}(zq^{t_{1}})\ldots \psi_{j_k}(zq^{t_{k}}),
\end{equation}
where 
\begin{align*}
&b^{-}(z)\in \mathcal{B}_{i}^{-}, \,\,\,k\geq 0,\,\,\, 1\leq j_1\leq
\ldots\leq j_k\leq n,\,\,\, t_{s}\in\textstyle\frac{1}{2}\mathbb{Z},\\
& t_{s}\leq t_{r} \text{ if } s\leq r\text{ and } j_s =j_r.
\end{align*}
Set
\begin{equation}\label{ordering}
o(b(z))=(t_1,\ldots,t_k),\qquad t(b(z))=(j_1,\ldots,j_k).
\end{equation}

Naturally, the space $\left<\mathcal{Y}_i (z)\right>_{\psi}$  spanned by the set $\mathcal{B}_{i,\psi}^{-}$ (over
$\mathbb{C}(q^{1/2})$) is bigger
than $\left<\mathcal{Y}_i (z)\right>$, i.e.
$\left<\mathcal{Y}_i (z)\right>\subsetneq \left<\mathcal{Y}_i (z)\right>_\psi$ . 
For example,
\begin{equation*}
\mathcal{Y}_{1}(z)\psi_{2}(zq^t)\not\in \left<\mathcal{Y}_1
(z)\right>\quad\text{for all } t\in\textstyle\frac{1}{2}\mathbb{Z}
\end{equation*}
because
\begin{equation}\label{notin}
\psi_{2}(z)_{\bullet} \mathcal{Y}_{1}(z) =0.
\end{equation}

\begin{lem}\label{independence_final}
The set $\mathcal{B}_{i,\psi}^{-}$ is linearly independent.
\end{lem}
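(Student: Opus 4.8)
The plan is to prove linear independence by probing these operators on the Heisenberg Fock space and reading off the $\psi$-data from the resulting scalar coefficients, after first reducing by weight. Since every $\psi_j(zq^t)$ has weight $0$, each vector \eqref{the_vectors} is homogeneous of weight $\wt b^{-}(z)$, and operators of different weights carry $1\otimes e^{0}$ into different components $K(1)\otimes e^{\mu}$ of $V$; hence a vanishing linear combination splits according to weight, and it suffices to treat a vanishing relation among finitely many terms $b_m^{-}(z)\,\Psi_m(z)$, all of one fixed weight $\mu$, where $b_m^{-}\in\mathcal{B}_i^{-}$ and $\Psi_m(z)=\psi_{j_1^{(m)}}(zq^{t_1^{(m)}})\cdots\psi_{j_{k}^{(m)}}(zq^{t_{k}^{(m)}})$.

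The key separation tool is the behaviour of the $\psi$'s on the Heisenberg side. All the operators $\psi_j(w)$ mutually commute (their exponents involve only the annihilation modes $a_j(r)$, $r>0$, which commute by \eqref{104_heisenberg}), they fix $1\otimes e^{0}$ up to a scalar $\kappa_m$, and a single creation mode satisfies $[a_i(-r),\psi_j(w)]=\delta_{ij}\,e_r\,w^{-r}\,\psi_j(w)$ for a nonzero constant $e_r$. Consequently $\Psi_m(z)$ intertwines each creation operator as $\Psi_m(z)\,a_i(-r)=\bigl(a_i(-r)-\sigma^{(m)}_{i,r}(z)\bigr)\,\Psi_m(z)$, where $\sigma^{(m)}_{i,r}(z)=e_r z^{-r}\sum_{s:\,j_s^{(m)}=i}q^{-t_s^{(m)}r}$ is a scalar. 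Applying the supposed relation $\sum_m c_m b_m^{-}(z)\Psi_m(z)=0$ to a probe $a_{i_1}(-r_1)\cdots a_{i_N}(-r_N)(1\otimes e^{0})$ and using $\Psi_m(z)(1\otimes e^{0})=\kappa_m(1\otimes e^{0})$ then yields, for every choice of probe,
\[
\sum_m c_m\kappa_m\sum_{S\subseteq\{1,\dots,N\}}(-1)^{|S|}\Bigl(\textstyle\prod_{l\in S}\sigma^{(m)}_{i_l,r_l}(z)\Bigr)\,b_m^{-}(z)\Bigl(\textstyle\prod_{l\notin S}a_{i_l}(-r_l)(1\otimes e^{0})\Bigr)=0 .
\]
Here the whole dependence on the sequences $(t_s^{(m)})$ has been pushed into the explicit scalars $\sigma^{(m)}_{i,r}(z)$.

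The main obstacle is that $b_m^{-}(z)$ does not preserve Heisenberg energy, so the terms above cannot be separated by the leftover monomials $\prod_{l\notin S}a_{i_l}(-r_l)$ alone. I would overcome this using the rigid coupling, valid for all of the Frenkel-Jing and Koyama operators, between the power of $z$ and the Heisenberg energy: inspecting the realizations underlying Proposition \ref{relations} shows that in $x_j^{\pm}(z)$, $\psi_j(z)$ and $\mathcal{Y}_i(z)$ each term raises (resp.\ lowers) the energy by the same amount by which it raises (resp.\ lowers) the $z$-exponent, up to a constant fixed by the weights. Thus the $z$-degree of $b_m^{-}(z)v$ is controlled by the energy of $v$, and in the displayed identity the fully contracted term $S=\{1,\dots,N\}$, which alone carries the complete $\psi$-configuration through $\prod_l\sigma^{(m)}_{i_l,r_l}(z)\propto z^{-(r_1+\cdots+r_N)}$, occupies the lowest range of $z$-degrees, every leftover creation operator strictly raising the degree. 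Extracting that lowest part leaves a relation of the form $\sum_m c_m\kappa_m(-1)^{N}\bigl(\prod_l\sigma^{(m)}_{i_l,r_l}(z)\bigr)\,b_m^{-}(z)(1\otimes e^{0})=0$.

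To finish I would combine two independence inputs. By the weight-lowering argument used to prove Lemma \ref{linear_ind}, applied now to the vectors $b^{-}(z)(1\otimes e^{0})$ (equivalently, by staying at the operator level and composing with lowering products $x^{-}(z)_{\bullet}$ until each $b_m^{-}$ reaches the one-dimensional lowest weight space of $L(\lambda_i)$), the contributions of distinct $b_m^{-}\in\mathcal{B}_i^{-}$ are separated, so that for each fixed $B\in\mathcal{B}_i^{-}$ one is left with $\sum_{m:\,b_m^{-}=B}c_m\kappa_m\prod_l\sigma^{(m)}_{i_l,r_l}(z)=0$. Second, choosing a probe whose index pattern $(i_l)$ matches the $\psi$-indices and letting the distinct modes $r_l$ vary, these coefficients become products of power sums $\sum_{s:\,j_s^{(m)}=i}q^{-t_s^{(m)}r}$ in the variables $q^{-t_s^{(m)}}$; by Newton's identities such power sums separate the underlying multisets $\{t_s^{(m)}:j_s^{(m)}=i\}$, so distinct $\psi$-configurations give linearly independent coefficients and all $c_m=0$. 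I expect the entanglement of the leftmost factor $b_m^{-}(z)$ with the Heisenberg grading to be precisely the delicate point; an essentially equivalent but bookkeeping-lighter alternative is an induction on the number of $\psi$-factors, peeling off one factor at a time by commuting a single creation operator through as above, with the same $z$-degree/energy coupling doing the work.
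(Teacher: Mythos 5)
Your weight reduction and your use of the commutation of $\psi_j(w)$ with creation modes are reasonable in outline, but the crucial extraction step fails, and it fails precisely because of the energy/$z$-degree homogeneity you yourself invoke. Each $b_m^{-}(z)$ is a normally ordered product of exponentials and therefore contains annihilation modes; its coefficient of $z^h$ shifts the total (Heisenberg plus lattice) degree by exactly $h+\Delta_m$ for a constant $\Delta_m$. Hence, writing $w_S=\prod_{l\notin S}a_{i_l}(-r_l)(1\otimes e^{0})$, the component of $b_m^{-}(z)w_S$ of any fixed output degree $F$ sits at the single power $z^{F-\deg w_S-\Delta_m}$; after multiplying by $\prod_{l\in S}\sigma^{(m)}_{i_l,r_l}(z)\propto z^{-\sum_{l\in S}r_l}$ and using $\deg w_S=\sum_{l\notin S}r_l$, one finds that for each fixed $m$ \emph{every} subset $S$ contributes to output degree $F$ at exactly the same power $z^{F-\Delta_m-(r_1+\cdots+r_N)}$. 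In other words, leftover creation operators do not raise the $z$-degree: $b_m^{-}(z)$ can contract them with its annihilation part, and the output then appears at compensatingly lower powers of $z$ --- the bottom coefficient of $b_m^{-}(z)w_S$ lies exactly $\sum_{l\notin S}r_l$ powers of $z$ below the bottom coefficient of $b_m^{-}(z)(1\otimes e^{0})$. So the $z$-degree filtration is blind to $S$, the ``lowest part'' of your displayed identity receives contributions from all contraction patterns simultaneously, and the relation $\sum_m c_m\kappa_m(-1)^{N}\bigl(\prod_l\sigma^{(m)}_{i_l,r_l}(z)\bigr)b_m^{-}(z)(1\otimes e^{0})=0$ does not follow. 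Repairing this would require genuinely disentangling the Wick contractions of the probe against the annihilation part of $b_m^{-}(z)$, which is exactly the bookkeeping your argument was designed to avoid.

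There is also a smaller error: by \eqref{104_heisenberg} the commutator $[a_i(-r),\psi_j(w)]$ is proportional to $[a_{ij}r]$, which is nonzero whenever $|i-j|\leq 1$, not only for $i=j$, so your scalars $\sigma^{(m)}_{i,r}(z)$ must collect all $\psi$-factors whose index is equal \emph{or adjacent} to $i$; for generic $q$ this is repairable by inverting the matrix $\bigl([a_{ij}r]/[r]\bigr)_{i,j}$, but it has to be done before any Newton-identity argument applies. For comparison, the paper's proof avoids Fock-space probing altogether: after the same weight reduction it multiplies by inverses of the (invertible) operators involved to reduce to a vanishing combination $L(z)$ of pure products $\psi_{j_1}(zq^{t_1})\cdots\psi_{j_k}(zq^{t_k})$, normalizes the $\psi$-multiplicities by conjugating with the lattice operators $e^{\lambda_m}$, and then evaluates $L(z_0)_{\bullet}\mathcal{Y}_{j}(z_0q^{t})$ for a suitable choice of $j$ and $t$: by \eqref{r9} and \eqref{r10} this kills some summands and not others, contradicting the minimality of the linear combination. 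The zeros of $\psi_j(z_1)\mathcal{Y}_j(z_2)$ thus perform the separation that your $z$-degree argument cannot.
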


\begin{proof}
We shall prove a ``stronger'' statement from which the statement of the Lemma clearly follows:
The set of all vectors 
$$b(z)\psi_{j_1}(zq^{t_{1}})\ldots \psi_{j_k}(zq^{t_{k}}),$$
where
\begin{align*}
&b(z)\in \bigcup_{i=1}^{n} \mathcal{B}_{i}^{-}, \,\,\,k\geq 0,\,\,\, 1\leq
j_1\leq \ldots\leq j_k\leq n,\,\,\, t_{s}\in\textstyle\frac{1}{2}\mathbb{Z},\\
& t_{s}\leq t_{r} \quad\text{if}\quad s\leq r\text{ and } j_s =j_r.
\end{align*}is linearly independent.

Suppose that 
\begin{equation}\label{eq:lin_comb}
\sum_{s=1}^{k}\nu_s a_s(z)=0\quad\text{for some
}\nu_s\in\mathbb{C}(q^{1/2})\setminus \left\{0\right\},\, a_s(z)\in
\bigcup_{i=1}^{n}\mathcal{B}_{i,\psi}^{-}.
\end{equation}
Furthermore, assume that $k\geq 2$ is the smallest positive integer for which
such a nontrivial linear combination exists. 
Without loss of generality we can assume that the weights of all $a_s(z)$ are equal. 
Indeed, if $\wt a_r(z)\neq \wt a_s (z)$ for some $r,s=1,2,\ldots,k$, we can
proceed similarly as in the proof of Lemma  \ref{linear_ind}.
Next, we can assume that each $a_s(z)$ is of the form
$$a_s(z)=\mathcal{Y}_i (z) \psi_{j_{1,s}}(zq^{t_{1,s}})\cdots \psi_{j_{l_s,s}}(zq^{t_{l_s,s}})$$
for some $l_s \geq 0$, $1\leq j_{1,s}\leq \ldots\leq j_{l_s,s}\leq n$, $t_{j,s}\in\frac{1}{2}\mathbb{Z}$, $j=1,2,\ldots,l_s$.
Multiplying linear combination (\ref{eq:lin_comb}) by appropriate
invertible operators we can ``remove" $\mathcal{Y}_i (z)$ and get
\begin{equation}\label{eq2:lin_comb}
L(z):=\sum_{s=1}^{k}\nu_s \psi_{j_{1,s}}(zq^{t_{1,s}})\cdots \psi_{j_{l_s,s}}(zq^{t_{l_s,s}})=0.
\end{equation}
Since 
$$e^{\lambda_{m}}\psi_{j}(z)=q^{-\delta_{m,j}}\psi_{j}(z)e^{\lambda_{m}},$$
by multiplying (\ref{eq2:lin_comb}) with $e^{\lambda_m}$, moving the (invertible) operator $e^{\lambda_m}$ all the way to the right
and then dropping the operator we get
\begin{equation}\label{eq3:lin_comb}
\sum_{s=1}^{k}\nu_s q^{-\delta_{m,j_{1,s}}-\ldots-\delta_{m,j_{l_s,s}}} \psi_{j_{1,s}}(zq^{t_{1,s}})\cdots \psi_{j_{l_s,s}}(zq^{t_{l_s,s}})=0.
\end{equation}
From (\ref{eq3:lin_comb}) we see that it is sufficient to consider only a linear combination as in (\ref{eq2:lin_comb}) such that all the powers of $q$,
$-\delta_{m,j_{1,s}}-\ldots-\delta_{m,j_{l_s,s}}$, $s=1,2,\ldots,k$, are equal.
Since the operators $\psi_{j}(z)$ are invertible, we can assume that for each $j$ and $t$ there exists an index $s$ 
such that the operator $\psi_j (zq^t)$ does not appear in the $s$th summand in (\ref{eq2:lin_comb}).
Recall relations (\ref{r9}) and (\ref{r10}). We can choose some $j=1,2,\ldots,n$ and $t\in\frac{1}{2}\mathbb{Z}$ 
such that there exist indices $r$ and $s$ such that
$$\psi_{j_{1,r}}(zq^{t_{1,r}})\cdots \psi_{j_{l_r,r}}(zq^{t_{l_r,r}})_\bullet \mathcal{Y}_{j}(zq^t)=0\quad\text{and}\quad \psi_{j_{1,s}}(zq^{t_{1,s}})\cdots \psi_{j_{l_s,s}}(zq^{t_{l_s,s}})_\bullet \mathcal{Y}_{j}(zq^t)\neq 0.$$
Finally, by using the substitution $z_0 = q^{-t}z$ in $L(z_0)_\bullet  \mathcal{Y}_{j}(z_0 q^t)=0$ we get a contradiction to the choice of $k$.
\end{proof}

By the discussion preceding Lemma \ref{independence_final} we see that the set
$$\mathcal{B}_{i} = \mathcal{B}_{i,\psi}^{-} \cap \left<\mathcal{Y}_i (z)\right>$$
spans $\left<\mathcal{Y}_i (z)\right>$, so as a consequence of Lemma
\ref{independence_final} we have

\begin{thm}\label{easycor}
The set $\mathcal{B}_{i}$ forms a basis for the space $\left<\mathcal{Y}_i
(z)\right>$.
\end{thm}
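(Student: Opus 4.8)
The plan is to establish the two defining properties of a basis — linear independence and spanning — for $\mathcal{B}_i$ separately, treating the second as the substantive half.

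Linear independence requires no further work: by definition $\mathcal{B}_i = \mathcal{B}_{i,\psi}^- \cap \left<\mathcal{Y}_i(z)\right>$ is a subset of $\mathcal{B}_{i,\psi}^-$, and Lemma \ref{independence_final} already asserts that $\mathcal{B}_{i,\psi}^-$ is linearly independent; hence so is every subset, and $\mathcal{B}_i$ in particular.

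For spanning the delicate point is that, for a subspace $W$ of a space $U$ carrying a basis $B$, the intersection $B\cap W$ need not span $W$. I would therefore show that $\left<\mathcal{Y}_i(z)\right>$ is a coordinate subspace of $\left<\mathcal{Y}_i(z)\right>_\psi = \spn_{\mathbb{C}(q^{1/2})}\mathcal{B}_{i,\psi}^-$ relative to the basis $\mathcal{B}_{i,\psi}^-$, meaning that it is already spanned by those basis vectors that lie in it. Concretely, I would take an arbitrary generator $a_1(z)_\bullet\cdots_\bullet a_k(z)_\bullet\mathcal{Y}_i(z)$ of $\left<\mathcal{Y}_i(z)\right>$ and reduce it to a $\mathbb{C}(q^{1/2})$-linear combination of vectors of the form \eqref{the_vectors}, arranging that each summand is itself obtained from $\mathcal{Y}_i(z)$ by iterated ``$_\bullet$''-products — hence a member of $\left<\mathcal{Y}_i(z)\right>$ — and, by the nonvanishing-and-uniqueness assertions in the proofs of Lemmas \ref{x-Y} and \ref{nedokazana}, a scalar multiple of a single element of $\mathcal{B}_{i,\psi}^-$. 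Any such summand then lies in $\mathcal{B}_{i,\psi}^-\cap\left<\mathcal{Y}_i(z)\right> = \mathcal{B}_i$, which yields the spanning claim.

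The reduction itself I would run by induction, peeling off the outermost operator $a_1(z)$ while the induction hypothesis keeps the inner expression in the canonical shape \eqref{the_vectors}. If $a_1 = \phi_j$ the summand is killed by the lemma asserting $\phi_j(z)_\bullet a(z) = 0$; if $a_1 = x_j^-$ or $a_1 = \psi_j$ I would commute it past the trailing $\psi$-factors via \eqref{r6}--\eqref{r10} (at the cost of invertible scalars) and then apply Lemma \ref{x-Y}, respectively Lemma \ref{nedokazana}, to return to the span of the vectors \eqref{the_vectors}, finally reordering the $\psi$-factors into the admissible order $j_1\leq\cdots\leq j_k$, $t_s\leq t_r$. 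The main obstacle is the case $a_1 = x_j^+$: here I would transport $x_j^+$ to the right across each $x_p^-$ using the Drinfeld relation of Lemma \ref{drinfeld}, dropping the $\phi_j$-contributions (which vanish on $\left<\mathcal{Y}_i(z)\right>$) and retaining the $\psi_j$-contributions, until $x_j^+$ meets $\mathcal{Y}_i(z)$ and is annihilated because $x_j^+(z)_\bullet\mathcal{Y}_i(z) = 0$ (the operator counterpart of $e_j v_{\lambda_i} = 0$, obtained from \eqref{r14}--\eqref{r15} by the argument of Lemma \ref{x+Y}); each surviving summand is then of one of the forms already treated. Granting the spanning, the theorem follows by combining it with the linear independence noted at the outset.
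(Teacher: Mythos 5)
Your proposal is correct and takes essentially the same route as the paper: linear independence of $\mathcal{B}_i$ is inherited as a subset of $\mathcal{B}_{i,\psi}^{-}$ via Lemma \ref{independence_final}, and spanning is obtained by reducing every iterated ``$_\bullet$''-product to a linear combination of canonical vectors \eqref{the_vectors} each of which lies in $\left<\mathcal{Y}_i (z)\right>$, which is precisely the content of the paper's discussion preceding Lemma \ref{independence_final}. The only difference is one of explicitness: you spell out the inductive reduction (the $\phi_j$ annihilation, commuting past trailing $\psi$-factors via \eqref{r6}--\eqref{r10}, and the $x_j^{+}$ case via Lemma \ref{drinfeld} together with $x_j^{+}(z)_\bullet \mathcal{Y}_i(z)=0$), which the paper leaves implicit in that discussion.
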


\begin{ex}\label{exampleeasy}
Consider the quantum affine algebra $U_{q}(\widehat{\mathfrak{sl}}_{2})$. The
basis for the space $\left<\mathcal{Y}_1
(z)\right>$, given in Theorem \ref{easycor}, is the set
$$\mathcal{B}_{1}=A_{1,\lambda_{1}}\cup A_{1,\lambda_{1}-\alpha_1},$$
where
\begin{align*}
&A_{1,\lambda_{1}}=\left\{ \mathcal{Y}_1
(z)\psi_{1}(zq^{3/2})^{l} : l\in\mathbb{Z}_{\geq 0}\right\};
\\
&A_{1,\lambda_{1}-\alpha_1}=\left\{x_{1}^{-}(z)_\bullet \mathcal{Y}_1
(z)\psi_{1}(zq^{3/2})^{l} : l\in\mathbb{Z}_{\geq 0}\right\}.
\end{align*}
\end{ex}

\begin{ex}\label{examplehard}
Consider the quantum affine algebra $U_{q}(\widehat{\mathfrak{sl}}_{3})$. The
basis for the space $\left<\mathcal{Y}_1
(z)\right>$, given in Theorem \ref{easycor}, is the set
$$\mathcal{B}_{1}=A_{2,\lambda_{1}}\cup A_{2,\lambda_{1}-\alpha_1}\cup
A_{2,\lambda_{1}-\alpha_1-\alpha_2},$$ where
\begin{align*}
&A_{2,\lambda_{1}}=\left\{ \mathcal{Y}_1
(z)\psi_{1}(zq^{3/2})^{l}\psi_{2}(zq^{5/2})^{m} : l,m\in\mathbb{Z}_{\geq
0},\,\text{if }m>0\text{ then }l>0\right\};
\\
&A_{2,\lambda_{1}-\alpha_1}=\left\{x_{1}^{-}(z)_\bullet \mathcal{Y}_1
(z)\psi_{1}(zq^{3/2})^{l}\psi_{2}(zq^{5/2})^{m} : l,m\in\mathbb{Z}_{\geq
0}\right\};
\\
&A_{2,\lambda_{1}-\alpha_1-\alpha_2}=\left\{x_{2}^{-}(z)_\bullet
x_{1}^{-}(z)_\bullet \mathcal{Y}_1 (z)\psi_{1}(zq^{3/2})^{l}\psi_{2}(zq^{5/2})^{m} : l,m\in\mathbb{Z}_{\geq
0}\right\}.
\end{align*}
The constraint ``if $m>0$ then
$l>0$'', in the definition of set $A_{2,\lambda_{1}}$,  is a consequence of
\eqref{notin}.
\end{ex}

\section{Algebra \texorpdfstring{$U_{q}(\mathfrak{sl}_{n+1})_z$}{Uq(sln+1)[z]}
and its representations} \subsection{Algebra
\texorpdfstring{$U_{q}(\mathfrak{sl}_{n+1})_z$}{Uq(sln+1)[z]}}
For $j=1,2,\ldots,n$ define  elements  $L_j, M_j\in
U_q(\mathfrak{h})\subset U_q(\mathfrak{sl}_{n+1})$   by
\begin{align*}
&L_j =K_{1}^{n+1-j}K_{2}^{2(n+1-j)}\cdots
K_{j-1}^{(j-1)(n+1-j)}K_{j}^{j(n+1-j)}K_{j+1}^{j(n-j)}\cdots K_{n}^{j},\\
&M_{j}=\left\{\begin{array}{l@{\,\ }l} q^{-(n+1)^2} &\textrm{ if }j=1\text{ or
}j=n,\\
 q^{-(n+1)^2}L_{j-1}L_{j+1}&\text{ if }1<j<n.
\end{array}\right.
\end{align*}
The elements $L_j$ and $M_j$ are invertible and they satisfy
$$L_j L_k =L_k L_j,\qquad M_j M_k =M_k M_j$$
for $j,k=1,2,\ldots,n$.
Let $z_1,\ldots,z_n$ be commutative formal variables and
$$w_j=e^{(M_j-M_{j}^{-1}) z_j}=\sum_{r\geq
0}\frac{(M_{j} -M_{j}^{-1})^r}{r!}z_{j}^{r}\in
U_{q}(\mathfrak{sl}_{n+1})[[z_1,\ldots,z_n]],\quad j=1,2,\ldots,n .$$ The
algebra $U_{q}(\mathfrak{h})[w_1,\ldots,w_n]$ is  commutative but the elements $w_j$ are not central in
the algebra $U_{q}(\mathfrak{sl}_{n+1})[w_1,\ldots,w_n]$.  


Define elements $\bar{e}_j,\bar{k}_j \in U_{q}(\mathfrak{sl}_{n+1})[w_1,\ldots,w_n]$
by $$\bar{e}_j=e_j w_j,\quad \bar{k}_j
 =(K_{j}-K_{j}^{-1})w_j\qquad\text{for }j=1,2,\ldots,n.$$ 
 Denote by
 $U_{q}(\mathfrak{sl}_{n+1})_z$ a $\mathbb{C}(q^{1/2})$ subalgebra of
 $U_{q}(\mathfrak{sl}_{n+1})[w_1,\ldots,w_n]$ generated by the elements
 $$\bar{e}_j,\, f_j,\,  \bar{k}_{j},\quad j=1,2,\ldots,n.$$

\begin{rem}
Since the classical limit $q\to 1$ of  $K_j$ is equal to $1$ (cf. \cite{Lu}),
the classical limit of $w_j$ is also $1$, so the classical limit of
$U_{q}(\mathfrak{sl}_{n+1})_z$ is the universal enveloping algebra
$U(\mathfrak{sl}_{n+1})$. Moreover, classical limits of $\bar{e}_j$, $f_j$ and
$\bar{h}_{j}= \bar{k}_j / (q-q^{-1})$ are exactly Chevalley generators of
$U(\mathfrak{sl}_{n+1})$.
\end{rem}

\begin{rem}
In general, the algebra $U_{q}(\mathfrak{sl}_{n+1})_z$ can not be defined in
terms of closed-form expressions among generators $\bar{e}_j,f_j,k_j$,
$j=1,2,\ldots,n$.
For example,  we have
$$f_{j+1} M_j =q^{n+1} M_{j} f_{j+1}$$
for $2\leq j\leq n-1$ and therefore 
\begin{align*}
f_{j+1} \bar{e}_{j}&=f_{j+1}e_j w_j=e_j f_{j+1} w_j = e_j f_{j+1} \sum_{r\geq
0}\frac{(M_{j} -M_{j}^{-1})^r}{r!}z_{j}^{r}\\
&=e_{j} \left(\sum_{r\geq
0}\frac{(q^{n+1}M_{j} -q^{-n-1}M_{j}^{-1})^r}{r!}z_{j}^{r}\right) f_{j+1}.
\end{align*}
\end{rem}

\subsection{Module \texorpdfstring{$L(\lambda_i)_z$}{L(lambda i)}}

For any $U_{q}(\mathfrak{sl}_{n+1})$-module $V$ the  space
$V[[z_1,\ldots,z_n]]$ can be in a natural way equipped
by the structure of  $U_{q}(\mathfrak{sl}_{n+1})[[z_1,\ldots,z_n]]$-module, as
well as by the structure of $U_{q}(\mathfrak{sl}_{n+1})[w_1,\ldots,w_n]$-module.
 Naturally, every $U_{q}(\mathfrak{sl}_{n+1})[w_1,\ldots,w_n]$-module
is also a $U_{q}(\mathfrak{sl}_{n+1})_z$-module. 

Let $v_{\lambda_i}$ be the highest weight vector of $U_{q}(\mathfrak{sl}_{n+1})$-module $L(\lambda_i)$ with the 
dominant integral highest weight $\lambda_i$.
Denote by $L(\lambda_i)_z$ an $U_{q}(\mathfrak{sl}_{n+1})_z$-submodule of
$L(\lambda_i)[[z_1,\ldots,z_n]]$ generated by
$v_{\lambda_i}$, i.e. $$L(\lambda_i)_z= U_{q}(\mathfrak{sl}_{n+1})_z
v_{\lambda_i}.$$

\begin{thm}\label{main}
(1) There exists a structure of $U_{q}(\mathfrak{sl}_{n+1})_z$-module on the space $\left<\mathcal{Y}_i (z)\right>$ such that
\begin{align}
&\bar{e}_j a(z) = x_{j}^{+}(z)_\bullet a(z),\label{main_1}\\
&f_j a(z)=x_{j}^{-}(z)_\bullet a(z),\label{main_2}\\
&\bar{k}_{j}a(z)=\psi_{j}(z)_\bullet a(z)\label{main_3}
\end{align}
for all $j=1,2,\ldots,n$ and $a(z)\in \left<\mathcal{Y}_i (z)\right>$.

\noindent (2) $U_{q}(\mathfrak{sl}_{n+1})_z$-modules $L(\lambda_i)_z$ and $\left<\mathcal{Y}_i (z)\right>$ are isomorphic.
\end{thm}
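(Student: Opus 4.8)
The plan is to prove both parts at once by constructing an explicit linear isomorphism $\Phi\colon L(\lambda_i)_z\to\left<\mathcal{Y}_i(z)\right>$ that intertwines the three families of generators, and then transporting the (manifest) $U_q(\mathfrak{sl}_{n+1})_z$-module structure of $L(\lambda_i)_z$ across $\Phi$. Note first that, by the very definition of $\left<\mathcal{Y}_i(z)\right>$ as the span of iterated ${}_\bullet$-products, the assignments $a(z)\mapsto x_j^+(z)_\bullet a(z)$, $a(z)\mapsto x_j^-(z)_\bullet a(z)$ and $a(z)\mapsto\psi_j(z)_\bullet a(z)$ define endomorphisms $E_j,F_j,H_j$ of $\left<\mathcal{Y}_i(z)\right>$; the content of (1) is that they realize the generators $\bar e_j,f_j,\bar k_j$, and (2) is the existence of $\Phi$. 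Since $L(\lambda_i)_z$ is a submodule of $L(\lambda_i)[[z_1,\ldots,z_n]]$ its module structure is automatic, so once $\Phi$ is shown to be a bijection with $\Phi(\bar e_j v)=E_j\Phi(v)$, $\Phi(f_j v)=F_j\Phi(v)$ and $\Phi(\bar k_j v)=H_j\Phi(v)$ for all $v$, the rule $u\cdot a(z):=\Phi(u\,\Phi^{-1}a(z))$ furnishes a module structure on $\left<\mathcal{Y}_i(z)\right>$ whose generators act exactly by $E_j,F_j,H_j$, giving (1), while $\Phi$ itself gives (2). This route sidesteps having to enumerate abstract relations of $U_q(\mathfrak{sl}_{n+1})_z$, since all of them are inherited through $\Phi$.

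First I would pin down a basis of $L(\lambda_i)_z$ parallel to the basis $\mathcal{B}_i$ of Theorem \ref{easycor}. Each $M_j$ lies in $U_q(\mathfrak{h})$ and, by the definition of $L_j$ (whose exponents are exactly the coordinates of $(n+1)\lambda_j$), acts on a weight-$\mu$ vector of $L(\lambda_i)$ by the scalar $q^{((n+1)(\lambda_{j-1}+\lambda_{j+1}),\mu)-(n+1)^2}$, with the evident convention for $j=1,n$; hence $w_j$ acts by the scalar power series $\exp((M_j-M_j^{-1})z_j)$. Applying words in $f_j$ produces the weight vectors $f_{l_r}\cdots f_{l_1}v_{\lambda_i}$ of the minuscule module $L(\lambda_i)$, while $\bar e_j$ and $\bar k_j$ additionally accumulate $w_j$-factors; since distinct powers of a nonconstant exponential are linearly independent over $\mathbb{C}(q^{1/2})$, one obtains a basis of $L(\lambda_i)_z$ indexed exactly as $\mathcal{B}_i$, the power of $w_j$ matching the multiplicity of a $\psi_j(zq^{t})$-factor and the admissibility constraints (such as the one in Example \ref{examplehard}) matching on both sides. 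I would then define $\Phi$ by sending this basis to $\mathcal{B}_i$.

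The verification that $\Phi$ intertwines the generators is the technical core, and I would organize it by the three families. For $f_j$ this is essentially Lemma \ref{x-Y} and the remark following it, which show that $x_j^-(z)_\bullet$ mirrors $f_j$ exactly, including the bookkeeping of the accompanying $\psi$-factors. For $\bar e_j=e_jw_j$ and $\bar k_j=(K_j-K_j^{-1})w_j$ I would use Lemmas \ref{x+Y} and \ref{nedokazana} together with the Corollary identifying the unique surviving shift $t$ (namely $t_{r_0}+2$ and $t_{r_0}+3/2$ respectively): the $e_j$, resp. $(K_j-K_j^{-1})$, part is matched by the nonvanishing criterion of those lemmas, while the scalar contributed by the $\bullet$-product at the shifted argument must reproduce, on the weight-$\mu$ component, precisely the eigenvalue of the $w_j$-factor. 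This is exactly the identity for which the weights occurring in $L_j$ were reverse-engineered, so it reduces to comparing the exponent $((n+1)(\lambda_{j-1}+\lambda_{j+1}),\mu)$ with the half-integer power of $q$ dictated by relations \eqref{r6}--\eqref{r10}, \eqref{r14} and \eqref{r15}.

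The main obstacle I anticipate is the consistency of Lemma \ref{drinfeld} with the subalgebra $U_q(\mathfrak{sl}_{n+1})_z$ in the off-diagonal case $j_1\neq j_2$. In the algebra one has $[\bar e_{j_1},f_{j_2}]=e_{j_1}[w_{j_1},f_{j_2}]$, and the Remark computing $f_{j+1}\bar e_j$ shows that $w_{j_1}$ does \emph{not} commute with $f_{j_2}$ when $|j_1-j_2|=1$; thus $[\bar e_{j_1},f_{j_2}]$ is a genuinely nonzero element of $U_q(\mathfrak{sl}_{n+1})_z$, whereas Lemma \ref{drinfeld} forces it to act by $0$ on $\left<\mathcal{Y}_i(z)\right>$. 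The resolution, which must be made explicit, is that each $L(\lambda_i)$ is \emph{minuscule}: for adjacent $j_1,j_2$ and any weight vector $v$ of weight $\mu$ one has $(\mu+\alpha_{j_1},\alpha_{j_2}^\vee)=(\mu,\alpha_{j_2}^\vee)-1\leq 0$, so $f_{j_2}e_{j_1}v=0$ and $[\bar e_{j_1},f_{j_2}]$ indeed acts by zero on $L(\lambda_i)_z$. For $j_1=j_2$ the identity $[M_j,f_j]=0$ gives $[\bar e_j,f_j]=\tfrac{1}{q-q^{-1}}\bar k_j$, matching the right-hand side of \eqref{def:rel} since $\phi_j(z)_\bullet=0$. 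Once this compatibility and the generator-by-generator matching are in place, bijectivity of $\Phi$ follows from Theorem \ref{easycor} together with the basis of $L(\lambda_i)_z$, and both assertions of the theorem follow.
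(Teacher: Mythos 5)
Your proposal is correct and follows essentially the same route as the paper's proof: the paper likewise builds a basis $\mathcal{C}_i$ of $L(\lambda_i)_z$ out of weight vectors $f_{l_1}\cdots f_{l_r}v_{\lambda_i}$ decorated with exponential factors $e(u,z_j)$ (the scalar action of $w_j$ on weight vectors), defines a basis-to-basis linear isomorphism $\Omega$ onto $\mathcal{B}_i$, checks generator-by-generator intertwining via Lemmas \ref{x-Y}, \ref{x+Y}, \ref{nedokazana} and \ref{drinfeld}, and transports the module structure across $\Omega$. Your explicit resolution of the off-diagonal case --- that $[\bar e_{j_1},f_{j_2}]=e_{j_1}[w_{j_1},f_{j_2}]$ is nonzero in $U_q(\mathfrak{sl}_{n+1})_z$ for $|j_1-j_2|=1$ yet acts by zero on $L(\lambda_i)_z$ because $L(\lambda_i)$ is minuscule --- is a point the paper relies on only implicitly (through Lemma \ref{drinfeld} and the commutative diagrams), so making it explicit is a welcome clarification rather than a deviation.
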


\begin{proof}
First, notice that
$$\psi_j (z)_\bullet x_{j_k}^{-}(z)_\bullet \ldots_\bullet
x_{j_2}^{-}(z)_\bullet x_{j_1}^{-}(z)_\bullet \mathcal{Y}_i (z)\neq
0\quad\text{if and only if}\quad \bar{k}_{j}f_{j_k}\ldots
f_{j_2}f_{j_1}v_{\lambda_{i}}\neq 0.$$
For any homogeneous vector $v\in L(\lambda_i)$ we have
$$w_j v=v e(u,z_j)\quad \text{for some }u\in\textstyle
\mathbb{Z},$$ 
where
$$e(u,z_j)=\exp( (q^u-q^{-u})z_j)=\sum_{r\geq
0}\frac{(q^{u} -
q^{-u})^r}{r!}z_{j}^{r}\in\mathbb{C}(q^{1/2})[[z_1,\ldots,z_n]].$$ Let
$\mathcal{C}$ be a set of all nonzero vectors
\begin{equation*}
c=f_{l_1}\cdots f_{l_r}v_{\lambda_i}e(u_1,z_{j_1})\cdots
e(u_s ,z_{j_s}), 
\end{equation*}
where 
\begin{align*}
&l_1,\ldots,l_r\in\left\{1,2,\ldots,n\right\},\,\,\, 1\leq j_1\leq
\ldots\leq j_s\leq n,\,\,\,  u_{1},\ldots,u_s\in\mathbb{Z},\,\,\, r,s\geq 0,\\
& u_{l}\geq u_{m}\quad \text{if}\quad l\leq m\text{ and } j_l =j_m.
\end{align*}
Recall \eqref{ordering} and set
$$o(c)=(-u_1,\ldots,-u_s),\qquad t(c)=(j_1,\ldots,j_s).$$ 
The set $\mathcal{C}_{i}:=\mathcal{C}\cap  L(\lambda_i)_z$ forms a basis of the
space $L(\lambda_i)_z$.

 There exists  a unique isomorphism of vector spaces $\Omega\colon
 L(\lambda_i)_z\to \left<\mathcal{Y}_i (z)\right>$ satisfying the following two
 conditions:
 \begin{enumerate}
   \item For all $f_{l_1}\cdots f_{l_r}v_{\lambda_i}e(u_1,z_{j_1})\cdots
e(u_s ,z_{j_s})\in \mathcal{C}_i$ there exist $t_1 ,\ldots, t_s
\in\frac{1}{2}\mathbb{Z}$ such that
\begin{align*}
&\Omega(f_{l_1}\cdots f_{l_r}v_{\lambda_i}e(u_1,z_{j_1})\cdots
e(u_s ,z_{j_s}))\\
&\qquad\qquad=x_{l_1}^{-}(z)_\bullet  \ldots_\bullet x_{l_r}^{-}(z)_\bullet
\mathcal{Y}_i (z)
\psi_{j_1}(zq^{t_1})\cdots\psi_{j_s}(zq^{t_s})\in\mathcal{B}_{i}\text{;}
\end{align*}
\item For all $c_1,c_2\in\mathcal{C}_{i}$ we have
$$\text{if }\wt(c_1)=\wt(c_2),\text{ }t(c_1)=t(c_2)\text{ and }o(c_1)\leq
o(c_2)\quad\text{then}\quad o(\Omega(c_1))\leq o(\Omega(c_2)),$$
where ``$\leq$'' is lexicographic order.
 \end{enumerate}
Notice that 
$$\wt ( \Omega(c)) =\wt(c) $$ 
for all $c\in\mathcal{C}_i$ and
 $$u_l -u_m =-(n+1)(t_l-t_m)\quad\text{when}\quad j_l=j_m.$$
 
The actions of the operators $\bar{e}_j$, $f_j$, $\bar{k}_j$ on an arbitrary
basis vector $c\in \mathcal{C}_i$ correspond to the actions of operators
$x_{j}^{+}(z)$, $x_{j}^{-}(z)$, $\psi_{j}(z)$ on $b(z)=\Omega(c)$ respectively.
For example, 
if $f_j c\neq 0$ for some $c\in\mathcal{C}_i$, then we have commutative diagrams
as in Figure \ref{kd1}. The left diagram is a consequence of \eqref{Q3}, while
the right diagram is a consequence of Lemma \ref{drinfeld}. \begin{figure}[H]
\begin{align*}\xymatrix @+4pc{
c e(u,z_j) \ar[r]^{f_j}  & f_j c e(u,z_j) \\
 c\ar[r]^{f_j}\ar[u]^{\frac{\bar{k}_j}{q-q^{-1}}} & f_j
 c\ar[u]_{-\frac{\bar{k}_j}{q-q^{-1}}}\ar[lu]_{\bar{e}_j}}
\qquad
\xymatrix @+4pc{
b(z)\psi_{j}(zq^t) \ar[r]^{x_{j}^{-}(z)}  & x_{j}^{-}(z)_\bullet
b(z)\psi_{j}(zq^t)
\\
 b(z)\ar[r]^{x_{j}^{-}(z)}\ar[u]^{\frac{\psi_{j}(z)}{q-q^{-1}}} &
 x_{j}^{-}(z)_\bullet
 b(z)\ar[u]_{-\frac{\psi_{j}(z)}{q-q^{-1}}}\ar[lu]_{x_{j}^{+}(z)}}
 \end{align*}
\caption{Commutative diagrams in  $L(\lambda_i)_z$ and $\left<\mathcal{Y}_i (z)\right>$}
\label{kd1} 
\end{figure}
\noindent Finally, we conclude that formulas \eqref{main_1}--\eqref{main_3}
define an $U_{q}(\mathfrak{sl}_{n+1})_z$-module structure on the space
$\left<\mathcal{Y}_i (z)\right>$, so the mapping $\Omega$ becomes an
$U_{q}(\mathfrak{sl}_{n+1})_z$-module isomorphism.
\end{proof}

\begin{rem}
For $i=0$ we have $\mathcal{Y}_{i}(z)=\mathcal{Y}_{0}(z)=1$. The action of
the Frenkel-Jing operators on $\mathcal{Y}_{0}(z)$ is trivial, i.e.
$$a(z)_\bullet \mathcal{Y}_{0}(z) =0\quad\text{for
}a(z)=x_{j}^{\pm}(z),\psi_{j}(z),\phi_{j}(z),\, j=1,2,\ldots,n,$$
so the space $\left<\mathcal{Y}_{0}(z)\right>$ is one-dimensional.
\end{rem}

In the end, we would like to provide an example of
$U_{q}(\mathfrak{sl}_{n+1})_z$-modules.
The $U_{q}(\mathfrak{sl}_{n+1})_z$-module
$L(\lambda_i)_z$, when considered as a vector space over
$\mathbb{C}(q^{1/2})$,  has a weight decomposition $$L(\lambda_i)_z =
\bigoplus_{\mu\in\wt L(\lambda_i)} \left(L(\lambda_i)_z\right)_{\mu},\quad\text{where}\quad\left(L(\lambda_i)_z\right)_{\mu}=\left\{v\in
L(\lambda_i)_z : \wt v=\mu\right\}.$$
Its weight subspaces $\left(L(\lambda_i)_z\right)_{\mu}$, $\mu\in\wt
L(\lambda_i)$, are infinite-dimensional.

\begin{ex}
Consider the diagrams of $U_{q}(\mathfrak{sl}_{2})_z$-module
$L(\lambda_1)_z$ and $U_{q}(\mathfrak{sl}_{3})_z$-module
$L(\lambda_1)_z$ given in Figures \ref{sl2zq} and \ref{sl3zq} respectively. Each
node represents one weight vector from the corresponding module. This vector  is also
an element of the basis $\mathcal{C}_1$, that was constructed in the proof of
Theorem \ref{main}.
Weight of each node is written as its label and all the nodes (i.e. corresponding vectors) are linearly
independent.
The yellow node represents the highest weight vector $v_{\lambda_1}\in
L(\lambda_1)\subset L(\lambda_1)_z$.
The arrows represent (nonzero) actions of the
generators $\bar{e}_{j},f_j,\bar{h}_j =\bar{k}_j/(q-q^{-1})$, where $j=1$
(Figure \ref{sl2zq}) or $j=1,2$ (Figure \ref{sl3zq}), on the corresponding nodes (vectors).
All the diagrams in the both figures are commutative.

The images of the bases $\mathcal{C}_1$, given in Figures 
\ref{sl2zq} and \ref{sl3zq}, under the $U_{q}(\mathfrak{sl}_{n+1})_z$-module
isomorphism $\Omega$, constructed in the proof of Theorem
\ref{main}, where $n=1,2$, are equal to the bases $\mathcal{B}_{1}$, given in
 Examples \ref{exampleeasy} and \ref{examplehard}.
\end{ex}

\begin{figure}[H]
\centering
\begin{tikzpicture}[scale=1.7]
\tikzstyle{every node}=[font=\tiny,inner sep=0pt,minimum size=31pt]
\tikzstyle{sh}=[-,line width=8pt,white]
\node at ( 0,0) [shape=circle,draw,fill=yellow] (a)
{$\lambda_1$}; \node at ( 2,0) [shape=circle,draw] (b) {$\lambda_1 \hspace{-2pt}-\hspace{-2pt}\alpha_1$};
\node at ( 0,2) [shape=circle,draw] (c) {$\lambda_1$};
\node at ( 2,2)  (d) {};
\draw [->,blue,thick] (a) -- (b);
\draw [->,blue,thick] (a) -- (c);
\node at ( 2,2) [shape=circle,draw] (d) {$\lambda_1 \hspace{-2pt}-\hspace{-2pt}\alpha_1$};
\draw [sh] (b) -- (d);
\draw [->,blue,thick] (b) -- (d);
\draw [sh] (c) -- (d);
\draw [->,blue,thick] (c) -- (d);
\draw [->,blue,thick] (b) -- (c);
\node at ( 0,4) (x) [shape=circle,draw] {$\lambda_1$};
\node at ( 2,4)  (y) [shape=circle,draw] {$\lambda_1 \hspace{-2pt}-\hspace{-2pt}\alpha_1$};
\draw [->,blue,thick] (c) -- (x);
\draw [sh] (d) -- (x);
\draw [->,blue,thick] (d) -- (x);
\draw [sh] (d) -- (y);
\draw [sh] (d) -- (y);
\draw [->,blue,thick] (d) -- (y);
\tikzstyle{qw}=[circle,draw=white!10,fill=white!10, draw opacity=0.6, fill
opacity=0.6,minimum size=0.6cm]
\node at (1,0) [qw] {} ; 
\node at (1,0) {$f_1$} ; 
\node at (1,1) [qw] {} ;  
\node at (1,1) {$\bar{e}_1$} ;
\node at (1,2) [qw] {} ;
\node at (1,2)  {$f_1$} ;
\node at (0,1) [qw] {} ;
\node at (0,1)  {$\bar{h}_1$} ;
\node at (2,1) [qw] {} ;
\node at (2,1)  {$-\bar{h}_1$} ;
\node at (0,3) [qw] {} ;
\node at (0,3)  {$\bar{h}_1$} ;
\node at (2,3) [qw] {} ;
\node at (2,3)  {$-\bar{h}_1$} ;
\node at (1,3) [qw] {} ;
\node at (1,3)  {$\bar{e}_1$} ;
\draw [->,blue,thick] (x) -- (y);
\node at ( 0,6)  (xx) {$\vdots$};
\node at ( 2,6)  (yy) {$\vdots$};
\draw [->,blue,thick] (x) -- (xx);
\draw [->,blue,thick] (y) -- (yy);
\draw [->,blue,thick] (y) -- (xx);
\node at (1,5) [qw] {} ;
\node at (1,5)  {$\bar{e}_1$} ;
\node at (2,5) [qw] {} ;
\node at (2,5)  {$-\bar{h}_1$} ;
\node at (0,5) [qw] {} ;
\node at (0,5)  {$\bar{h}_1$} ;
\node at (1,4) [qw] {} ;
\node at (1,4)  {$f_1$} ;
\end{tikzpicture}\caption{$U_{q}(\mathfrak{sl}_{2})_z$-module
$L(\lambda_1)_z$}
\label{sl2zq}
\end{figure}
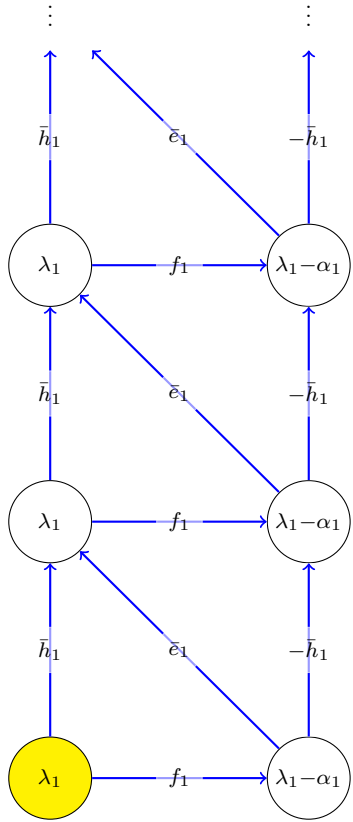

\begin{figure}[H]
\centering
\begin{tikzpicture}[scale=2.6]
\tikzstyle{every node}=[font=\tiny,inner sep=0pt,minimum size=31pt]
\tikzstyle{sh}=[-,line width=8pt,white]
\node at ( 0,0) [shape=circle,draw,fill=yellow] (a)
{$\lambda_1$}; \node at ( 2,0) [shape=circle,draw] (b) {$\lambda_1 \hspace{-2pt}-\hspace{-2pt}\alpha_1$};
\node at ( 0,2) [shape=circle,draw] (c) {$\lambda_1$};
\node at ( 2,2)  (d) {};
\node at ( 2.8,0.8) [shape=circle,draw] (bb) {$\lambda_1 \hspace{-2pt}-\hspace{-2pt}\alpha_1$};
\node at ( 0.8,2.8) (cc) {};
\node at ( 2.8,2.8) [shape=circle,draw] (dd) {$\lambda_1 \hspace{-2pt}-\hspace{-2pt}\alpha_1$};
\draw [->,blue,thick] (a) -- (b);
\draw [->,blue,thick] (a) -- (c);
\node at ( 2,2) [shape=circle,draw] (d) {$\lambda_1 \hspace{-2pt}-\hspace{-2pt}\alpha_1$};
\node at ( 0.8,2.8) [shape=circle,draw] (cc) {$\lambda_1$};
\draw [->,blue,thick] (bb) -- (cc);
\draw [sh] (b) -- (d);
\draw [->,blue,thick] (b) -- (d);
\draw [sh] (c) -- (d);
\draw [->,blue,thick] (c) -- (d);
\draw [->,blue,thick] (b) -- (c);
\draw [->,blue,thick] (bb) -- (dd);
\draw [->,blue,thick] (cc) -- (dd);
\node at ( 0.8,4.8) (z) {$\vdots$};
\node at ( 2.8,4.8)  (w) {$\vdots$};
\node at ( 0,4) (x) {$\vdots$};
\node at ( 2,4)  (y) {$\vdots$};
\draw [->,blue,thick] (c) -- (x);
\draw [->,blue,thick] (cc) -- (z);
\draw [sh] (d) -- (x);
\draw [->,blue,thick] (d) -- (x);
\draw [->,blue,thick] (dd) -- (z);
\draw [sh] (d) -- (y);
\draw [sh] (d) -- (y);
\draw [->,blue,thick] (d) -- (y);
\draw [->,blue,thick] (dd) -- (w);

\node at ( 4,0) [shape=circle,draw] (bbb)
{$\begin{matrix}\lambda_1
\hspace{-2pt}-\hspace{-2pt}\alpha_1\\-\alpha_2\end{matrix}$}; \node
at ( 4,2) [shape=circle,draw] (ddd) {$\begin{matrix}\lambda_1 \hspace{-2pt}-\hspace{-2pt}\alpha_1\\-\alpha_2\end{matrix}$}; \node at ( 4.8,0.8) [shape=circle,draw] (bbbb) {{$\begin{matrix}\lambda_1
\hspace{-2pt}-\hspace{-2pt}\alpha_1\\-\alpha_2\end{matrix}$}}; \node at ( 4.8,2.8) [shape=circle,draw]
(dddd) {{$\begin{matrix}\lambda_1
\hspace{-2pt}-\hspace{-2pt}\alpha_1\\-\alpha_2\end{matrix}$}};
\draw [->,red,thick] (b) -- (bb); \draw [->,red,thick] (d) -- (dd); \draw [->,red,thick] (bbb) -- (bbbb); \draw [->,red,thick] (ddd) -- (dddd);
\draw [->,red,thick] (b) -- (bbb);
\draw [sh] (d) -- (ddd);

\draw [->,red,thick] (d) -- (ddd);
\draw [->,red,thick] (bb) -- (bbbb);
\draw [->,red,thick] (dd) -- (dddd);
\draw [->,red,thick] (bbb) -- (bb);
\draw [->,red,thick] (ddd) -- (dd);
\node at ( 3.6,1.6)  (xx) {$\iddots$};
\node at ( 3.6,3.6)  (zz) {$\iddots$};
\node at ( 5.6,1.6)  (yy) {$\iddots$};
\node at ( 5.6,3.6)  (ww) {$\iddots$};
\draw [->,red,thick] (bb) -- (xx);
\draw [->,red,thick] (bbbb) -- (yy);
\draw [->,red,thick] (dd) -- (zz);
\draw [->,red,thick] (dddd) -- (ww);
\draw [->,red,thick] (dddd) -- (zz);
\draw [->,red,thick] (bbbb) -- (xx);
\tikzstyle{qw}=[circle,draw=white!10,fill=white!10, draw opacity=0.6, fill
opacity=0.6,minimum size=0.6cm]
\node at (1,0) [qw] {} ; 
\node at (1,0) {$f_1$} ; 
\node at (1,1) [qw] {} ; 
\node at (1,1) {$\bar{e}_1$} ;
\node at (1,2) [qw] {} ;
\node at (1,2)  {$f_1$} ;
\node at (0,1) [qw] {} ;
\node at (0,1)  {$\bar{h}_1$} ;
\node at (2,1) [qw] {} ;
\node at (2,1)  {$-\bar{h}_1$} ;
\node at (0,3) [qw] {} ;
\node at (0,3)  {$\bar{h}_1$} ;
\node at (2,3) [qw] {} ;
\node at (2,3)  {$-\bar{h}_1$} ;
\node at (1,3) [qw] {} ;
\node at (1,3)  {$\bar{e}_1$} ;
\node at (1.8,1.8) [qw] {} ;
\node at (1.8,1.8)  {$\bar{e}_1$} ;
\node at (1.8,2.8) [qw] {} ;
\node at (1.8,2.8)  {$f_1$} ;
\node at (2.8,1.8) [qw] {} ;
\node at (2.8,1.8)  {$-\bar{h}_1$} ;
\node at (0.8,3.8) [qw] {} ;
\node at (0.8,3.8)  {$\bar{h}_1$} ;
\node at (2.8,3.8) [qw] {} ;
\node at (2.8,3.8)  {$-\bar{h}_1$} ;
\node at (1.8,3.8) [qw] {} ;
\node at (1.8,3.8)  {$\bar{e}_1$} ;
 
\node at (3,0) [qw] {} ; 
\node at (3,0) {$f_2$} ; 
\node at (2.4,0.4) [qw] {} ; 
\node at (2.4,0.4) {$\bar{h}_2$} ; 
\node at (4.4,0.4) [qw] {} ; 
\node at (4.4,0.4) {$-\bar{h}_2$} ;
\node at (3.8,0.8) [qw] {} ; 
\node at (3.8,0.8) {$f_2$} ;  
\node at (3.4,0.4) [qw] {} ; 
\node at (3.4,0.4) {$\bar{e}_2$} ;
\node at (3.2,1.2) [qw] {} ; 
\node at (3.2,1.2) {$\bar{h}_2$} ;
\node at (5.2,1.2) [qw] {} ; 
\node at (5.2,1.2) {$-\bar{h}_2$} ;
\node at (4.2,1.2) [qw] {} ; 
\node at (4.2,1.2) {$\bar{e}_2$} ;
\node at (3,2) [qw] {} ; 
\node at (3,2) {$f_2$} ; 
\node at (2.4,2.4) [qw] {} ; 
\node at (2.4,2.4) {$\bar{h}_2$} ; 
\node at (4.4,2.4) [qw] {} ; 
\node at (4.4,2.4) {$-\bar{h}_2$} ;
\node at (3.8,2.8) [qw] {} ; 
\node at (3.8,2.8) {$f_2$} ;  
\node at (3.4,2.4) [qw] {} ; 
\node at (3.4,2.4) {$\bar{e}_2$} ;
\node at (3.2,3.2) [qw] {} ; 
\node at (3.2,3.2) {$\bar{h}_2$} ;
\node at (5.2,3.2) [qw] {} ; 
\node at (5.2,3.2) {$-\bar{h}_2$} ;
\node at (4.2,3.2) [qw] {} ; 
\node at (4.2,3.2) {$\bar{e}_2$} ;
\node at ( 2,2) [shape=circle,draw,thin] (d) {}; 
\node at ( 0.8,2.8) [shape=circle,draw, thin] (cc) {};

\end{tikzpicture}\caption{$U_{q}(\mathfrak{sl}_{3})_z$-module
$L(\lambda_1)_z$}
\label{sl3zq}
\end{figure}
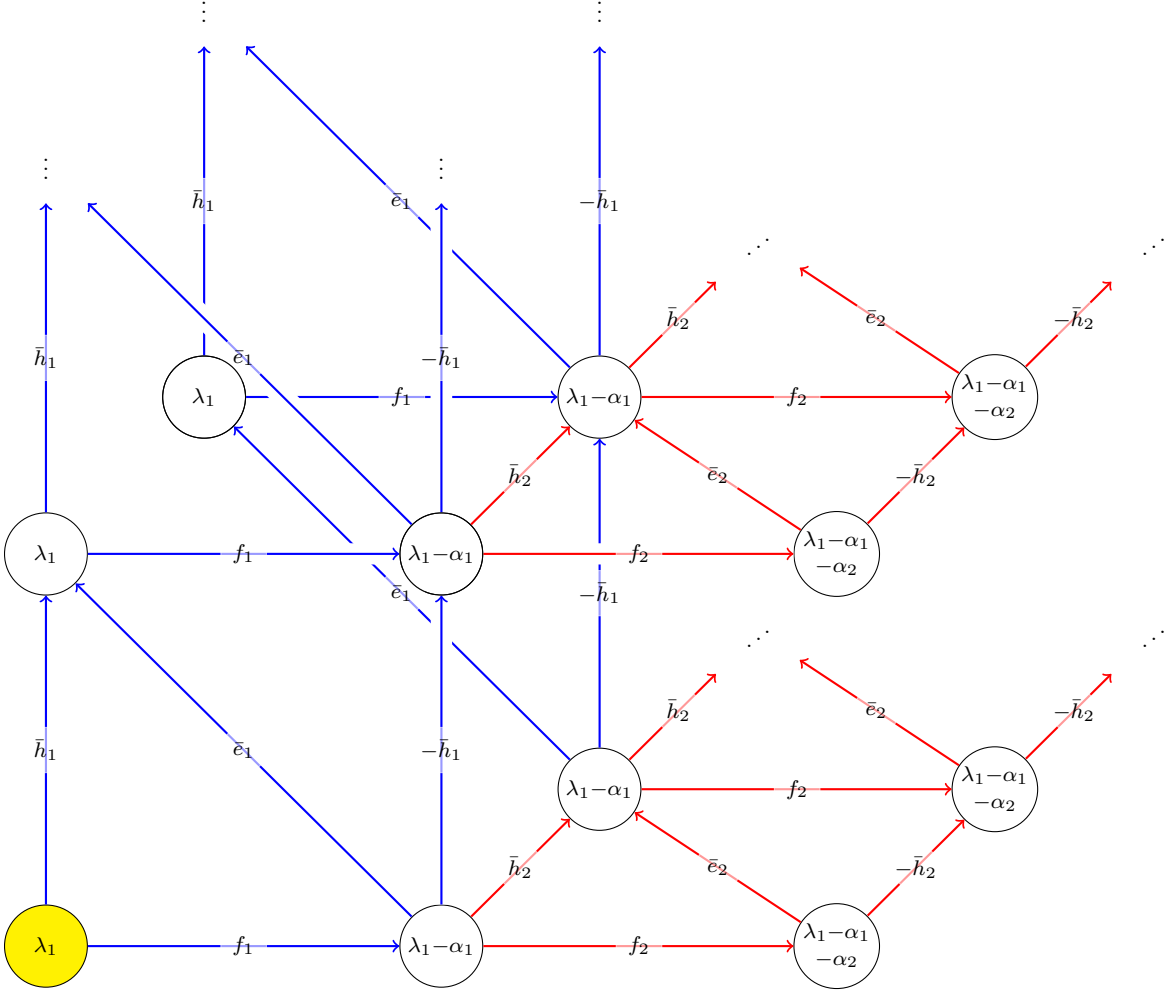


\section*{Acknowledgement}
The author would like to thank Mirko Primc for his valuable comments and
suggestions.



\begin{thebibliography}{9}
\bibitem{AB}
I. I. Anguelova, M. J. Bergvelt,  
{\em $H_{D}$-Quantum vertex algebras and bicharacters},
Commun. Contemp. Math. \textbf{11} (2009) 937--991;
\href{http://arxiv.org/abs/0706.1528}{arXiv:0706.1528 [math.QA]}.

\bibitem{Bor}
R. Borcherds, 
{\em Quantum vertex algebras},
in: 
Taniguchi Conference on Mathematics Nara'98, 
in:
Adv. Stud. Pure Math., 31, Math. Soc. Japan, Tokyo, 2001, 51--74.

\bibitem{D}
V. G. Drinfeld, 
{\em New realization of Yangian and quantized affine algebras}, 
Soviet Math. Dokl. \textbf{36} (1988), 212--216.

\bibitem{DM}
J. Ding, T. Miwa, 
{\em Quantum current operators - I. Zeros and poles of quantum current operators and the condition of quantum integrability}, 
Publ. RIMS, Kyoto Univ. \textbf{33} (1997), 277--284;
\href{http://arxiv.org/abs/q-alg/9608001}{arXiv:q-alg/9608001}.

\bibitem{EK}
P. Etingof, D. Kazhdan,
{\em Quantization of Lie bialgebras}, V, Selecta Math. (New Series) \textbf{6} (2000), 105--130;
\href{http://arxiv.org/abs/math/9808121}{arXiv:math/9808121 [math.QA]}.


\bibitem{FJ}
I. B. Frenkel, N. Jing, 
{\em Vertex representations of quantum affine algebras}, 
Proc. Natl. Acad. Sci. USA, Vol. \textbf{85} (1988), 9373--9377.

\bibitem{FR}
E. Frenkel, N. Reshetikhin, 
{\em Towards deformed chiral algebras}, 
Quantum Group Symposium, XXI International Colloquium on Group Theoretical Methods in Physics (Goslar,
1996), Heron Press, Sofia, 1997, pp. 27--42;
\href{http://arxiv.org/abs/q-alg/9706023}{arXiv:q-alg/9706023}.

\bibitem{HK}
J. Hong, S.-J. Kang, 
{\em Introduction to quantum groups and crystal bases}, 
Amer. Math. Soc., 2002.


\bibitem{Kac} 
V. G. Kac, 
{\em Infinite-dimensional Lie algebras}, 
3rd ed. Cambridge University Press, Cambridge, 1990.

\bibitem{Koyama}
Y. Koyama, 
{\em Staggered Polarization of vertex models with $U_{q}(\tilde{\mathfrak{sl}}_{n})$-symmetry}, 
Comm. Math. Phys. \textbf{164}, no. 2 (1994), 277--291;
\href{http://arxiv.org/abs/hep-th/9307197}{arXiv:hep-th/9307197}. 

\bibitem{Kozic}
S. Ko\v{z}i\'{c}, 
{\em Principal subspaces for quantum affine algebra $U_{q}(A_{n}^{(1)})$}, 
J. Pure Appl. Algebra \textbf{218} (2014), 2119--2148;
\href{http://arxiv.org/abs/1306.3712}{arXiv:1306.3712 [math.QA]}.

\bibitem{LL}
J. Lepowsky, H.-S. Li,
Introduction to Vertex Operator Algebras and Their Representations,
Progress in Math., Vol. \textbf{227}, Birkhauser, Boston, 2004.

\bibitem{Li}
H.-S. Li,  
{\em Nonlocal vertex algebras generated by formal vertex operators}, 
Selecta Math. (New Series) \textbf{11} (2005), 349--397;
\href{http://arxiv.org/abs/math/0502244}{arXiv:math/0502244 [math.QA]}. 

\bibitem{Li2}
H.-S. Li, 
{\em Quantum vertex $\mathbb{F}((t))$-algebras and their modules}, 
J. Algebra \textbf{324} (2010), 2262--2304;
\href{http://arxiv.org/abs/0903.0186}{ 	arXiv:0903.0186 [math.QA]}.

\bibitem{Li3}
H.-S. Li, 
{\em Quantum vertex algebras and their $\phi$-coordinated modules},
Commun. Math. Phys. \textbf{308} (2011), 703--741;
\href{http://arxiv.org/pdf/0906.2710v2.pdf}{arXiv:0906.2710 [math.QA]}.

\bibitem{Lu}
G. Lusztig, 
{\em Quantum deformation of certain simple modules over enveloping algebras}, 
Adv. Math. \textbf{70} (1988), 237--249.
\end{thebibliography}
\end{document}